\titleformat*{\section}{\large\bfseries}
\titleformat*{\subsection}{\bfseries}
\numberwithin{equation}{section}
\newtheorem{theorem}{Theorem}[section]{\bfseries}{\it}
{\bfseries}{\it}
\newtheorem{lemma}[theorem]{Lemma}{\bfseries}{\it}
\newtheorem{corollary}[theorem]{Corollary}{\bfseries}{\it}
{\bfseries}{\it}
{\bfseries}{\rmfamily}
{\bfseries}{\it}
\theoremstyle{definition}
\newtheorem{remark}{Remark}[section]{\bfseries}{\rmfamily}
\newcommand{\proofbox}{\qed}
\newcommand{\R}{\mathbb{R}}
\newcommand{\N}{\mathbb{N}}
\newcommand{\calVk}{\mathcal{V}_k}
\newcommand{\calVki}{\mathcal{V}_{k,\Omega}}
\newcommand{\Tk}{\mathcal{T}_k}
\DeclareMathOperator{\Card}{card}
\DeclareMathOperator{\diam}{diam}
\DeclareMathOperator{\Div}{div}
\renewcommand{\dim}{d}
\newcommand{\Dk}{{D_k}}
\newcommand{\abs}[1]{\lvert#1\rvert}
\newcommand{\norm}[1]{\lVert#1\rVert}
\newcommand{\Mfty}{M_\infty}
\newcommand{\Rddsym}{\R^{\dim\times\dim}_{\mathrm{sym}}}
\newcommand{\Rddsymp}{\R^{\dim\times\dim}_{\mathrm{sym},+}}
\newcommand{\CD}{C_{D}}
\newcommand{\Cstab}{C_{\mathrm{stab}}}
\newcommand{\lamk}{\lambda_k}
\newcommand{\mkp}{m_{k,\lamk}^*}
\newcommand{\Vkp}{V_{k,+}}
\newcommand{\klk}{k,\lambda_k}
\newcommand{\RkH}{{R_{k,\lambda_k}^1}}
\newcommand{\RkF}{{R_{k,\lambda_k}^2}}
\newcommand{\Creg}{C_{\Omega,\gamma}}
\newcommand{\Rk}{{\mathcal{R}_k}}
\title{Rates of convergence of finite element approximations of second-order mean field games with nondifferentiable Hamiltonians}
  \author{Yohance A. P. Osborne\footnotemark[1]~ and Iain Smears\footnotemark[2]}
\begin{document}

\maketitle

\renewcommand{\thefootnote}{\fnsymbol{footnote}}

\footnotetext[1]{Department of Mathematical Sciences, Durham University, Stockton Road, DH1 3LE Durham, United Kingdom (\texttt{yohance.a.osborne@durham.ac.uk}).}
\footnotetext[2]{Department of Mathematics, University College London, Gower
	Street, WC1E 6BT London, United Kingdom (\texttt{i.smears@ucl.ac.uk}).}

\begin{abstract}
{We prove a rate of convergence for finite element approximations of stationary, second-order mean field games with nondifferentiable Hamiltonians posed in general bounded polytopal Lipschitz domains with strongly monotone running costs. In particular, we obtain a rate of convergence in the $H^1$-norm for the value function approximations and in the $L^2$-norm for the approximations of the density. We also establish a rate of convergence for the error between the exact solution of the MFG system with a nondifferentiable Hamiltonian and the finite element discretizations of the corresponding MFG system with a regularized Hamiltonian.}
\end{abstract}

\section{Introduction}\label{sec:introduction}

Mean field games, which were introduced by Lasry and Lions~\cite{lasry2006jeuxI,lasry2006jeuxII,lasry2007mean}, and independently by Huang, Caines and Malham\'e~\cite{huang2006large}, are models of Nash equilibria of differential games of stochastic optimal control in which there are infinitely many players.
{Under appropriate assumptions, the Nash equilibria of the game} are characterized by a system of {nonlinear} PDE in which a Hamilton--Jacobi--Bellman (HJB) equation for the value function of the optimal control problem faced by a typical player is coupled with a Kolmogorov--Fokker--Planck (KFP) equation that determines the player density over the state space of the game.
For an introduction to the theory and applications of mean field games, we refer the reader to the extensive reviews~\cite{lasry2007mean,GueantLasryLions2003,GomesSaude2014,achdou2020mean}.
{In the literature on these problems, it is very often assumed that the Hamiltonian of the HJB equation is differentiable with respect to the variable for the gradient of the value function, since} the derivative of the Hamiltonian {appears in} the advective term of the KFP equation.
{This assumption is not merely a technical one, since the differentiability of the Hamiltonian is closely connected to the properties of the optimal feedback controls of the underlying control problem, with nondifferentiability of the Hamiltonian implying nonuniqueness of the optimal feedback controls.}
{However, in many applications of interest the optimal feedback controls are not necessarily unique and the Hamiltonian may fail to be differentiable, which is common for instance in minimal exit-time problems and problems with \emph{bang-bang} controls}.
Recently in~\cite{osborne2024thesis,osborne2022analysis,osborne2024erratum,osborne2023finite,osborne2024regularization}, we introduced a generalization of the MFG system of Lasry and Lions to accommodate general nondifferentiable Hamiltonians.
The KFP \emph{equation} is generalized to a partial differential \emph{inclusion} (PDI) involving the subdifferential of the Hamiltonian{, which expresses the idea that the players' choices among possibly nonunique optimal controls may no longer be given \emph{a priori}, but become an implicit part of the problem to be solved to find a Nash equilibrium.}

As a model problem, we consider here the MFG PDI from~\cite{osborne2022analysis}, which can be written formally as
\begin{equation}\label{eq:mfg_pdi_sys}
\begin{aligned}
- \nu\Delta u+H(x,\nabla u) &=F[m](x)  && \text{in }\Omega,
\\
-\nu\Delta m -  G(x) &\in \Div(m \partial_p H(x,\nabla u)) &&\text{in }\Omega,
\end{aligned}
\end{equation}
where the domain~$\Omega \subset \R^d$ denotes the state-space of the game in which the players' states evolve, where $u\colon \Omega \to {}\R$ denotes the value function and where $m\colon \Omega\to \R$ denotes the density of players.
We will consider~\eqref{eq:mfg_pdi_sys} along with homogeneous Dirichlet boundary conditions on $u$ and $m$ for simplicity, which arise in models where the players exit the game upon reaching the boundary $\partial \Omega$.
The constant $\nu>0$ relates to the strength of the stochastic noise in the player's dynamics.
The Hamiltonian $H\colon (x,p)\mapsto H(x,p)$ is a real-valued function on $\Omega\times \R^d$ that is convex {and Lipschitz} in its second argument and is obtained from the underlying optimal control problem of the players. 
The subdifferential $\partial_p H$ of the Hamiltonian maps points in $\Omega\times \R^d$ to nonempty closed convex subsets of $\R^d $, and, under modest assumptions, can be shown to be equal to the convex hull of the set of drifts formed by the optimal feedback controls, see for instance~\cite[Lemma~3.4.1]{osborne2024thesis}.
The coupling term $F$ represents the component of the players' cost functional that depends on the overall density, and is allowed to be a possibly nonlinear and nonlocal operator on the density function.
The source term $G$ in the KFP equation arises in situations where new players are entering the game at a certain rate per unit time.
Precise assumptions on the data will be stated in Section~\ref{sec:model_data} below. 
In practice, the problem~\eqref{eq:mfg_pdi_sys} is understood in a suitable weak sense with the inclusion formulated in terms of measurable selections from the subdifferential set.
The existence and uniqueness of weak solutions has been shown in~\cite{osborne2022analysis,osborne2024erratum,osborne2023finite} under suitable assumptions on the problem data, with the key results summarized in Section~\ref{sec-eq:PDI_weakform-def-and-disc} below.
We also refer the reader to~\cite{osborne2024regularization} for a derivation of MFG PDI in terms of the underlying optimal problem, and for an analysis showing how these characterize the limit when considering sequences of problems with differentiable Hamiltonians that are converging to a nondifferentiable limit.
We also mention the earlier work~\cite{ducasse2020second} which treated the case of a density-dependent Hamiltonian of the form $H(x,p,m)=K(x,m)|p|$ for some function $K$.

In~\cite{osborne2022analysis,osborne2023finite}, we introduced a class of piecewise linear stabilized finite element methods (FEM) for these problems, in both the steady-state and time-dependent settings.
In particular, under suitable assumptions, we proved the convergence, without rates, of the numerical approximations in the small-mesh limit (and small time-step size for the time-dependent case). 
More precisely, for the steady-state problem, we proved convergence in the $H^1$-norm for the value function, and in $L^q$-norms for the density, for a range of values of $q$ determined by the Sobolev embedding theorem.
{We mention that \emph{qualitative} results of a similar nature}, i.e.\ plain convergence in some suitable norms, {have been obtained previously for problems with differentiable Hamiltonians in earlier works} for a variety of discretization methods, including finite difference and semi-Lagrangian methods~\cite{achdou2013mean,achdou2010mean,achdou2016convergence,CarliniSilva14,CarliniSilve15}.
So far \emph{quantitative} convergence results in the form of error bounds have only been obtained for MFG systems with more regular Hamiltonians in~\cite{bonnans2022error,berry2025approximation,osborne2024near,osborne2025posteriori}.
In~\cite{bonnans2022error}, rates of convergence are shown for finite difference approximations of classical solutions for a time-dependent MFG system.
In~\cite{osborne2024near}, we proved the near asymptotic quasi-optimality and optimal rates of convergence of the stabilized finite element methods mentioned above, for problems with regular Hamiltonians and strongly monotone couplings.
We also mention that~\cite{berry2025approximation} considered error bounds for finite element approximations of a class of MFG that is based on a different approach via linearizations around classical stable solutions of the problem.
In~\cite{osborne2025posteriori}, we obtained the first \emph{a posteriori} error bounds for these problems, still in the context of regular Hamiltonians. 

In this paper, we prove \emph{a priori} error bounds for the stabilized FEM discretizations of~\eqref{eq:mfg_pdi_sys} that were introduced previously in~\cite{osborne2022analysis}.
Under suitable assumptions detailed below, for problems on a bounded polytopal domain $\Omega \subset \R^d$ with Lipschitz boundary, meshed by a shape-regular sequence of simplicial meshes $\{\mathcal{T}_k\}_{k\in \N}$ on $\Omega$, we show, in~Theorem~\ref{thm:main_convergence_rate} below, a rate of convergence 
\begin{equation}\label{eq:apriori_intro}
\norm{u-u_k}_{H^1(\Omega)}+\norm{m-m_k}_{L^2(\Omega)} \lesssim h_k^{\gamma/3},
\end{equation}
for all $k$ sufficiently large, where $(u_k,m_k)$ denotes the corresponding finite element approximation of~$(u,m)$ as defined in~\cite{osborne2022analysis}, and where the exponent $\gamma\in (0,1]$ depends on the domain~$\Omega$ via the elliptic regularity for Poisson's equation on $\Omega$ with $L^2$-regular source terms, where we only assume here that $\Omega$ is bounded, polytopal, and Lipschitz. Note that $\gamma=1$ in the case of convex domains, in which case we obtain a rate of order $1/3$ from~\eqref{eq:apriori_intro}.
The proof of~\eqref{eq:apriori_intro} requires addressing multiple challenges.
On top of the nonlinear coupling between the equations and their lack of coercivity, a further fundamental difficulty is that the term in~\eqref{eq:mfg_pdi_sys} involving the subdifferential of the Hamiltonian is quasilinear with respect to the gradient variable, is only determined implicitly via the inclusion, and is generally discontinuous with respect to its arguments as a result of the possible lack of differentiability of the Hamiltonian. 
{Thus, in general, there is no quantitative control between general measurable selections from the subdifferentials $\partial_p H(x,\nabla u_k)$ and those from~$\partial_p H(x,\nabla u)$.} 
There is also the problem that, in general, measurable selections from the subdifferential $\partial_p H(x,\nabla u)$ in~\eqref{eq:mfg_pdi_sys} have low regularity, so the density function $m$ may also have very low regularity, even in one space-dimension, see~\cite[Example~2]{osborne2022analysis} for a concrete example.
Addressing these challenges therefore requires significant new ideas beyond the techniques that we developed in~\cite{osborne2024near} for the quasi-optimality results for problems with more regular Hamiltonians. 
{We also note that the convergence rate in~\eqref{eq:apriori_intro} treats the $L^2$-norm of the error in the density approximation rather than the $H^1$-norm, which is natural given that the convergence in norm of the gradients of density approximations is still an open problem as a result of the lack of continuity in the subdifferential sets of $H$, see~\cite[p.~151]{osborne2022analysis} for further discussion.}

{The basis of our approach to the proof of~\eqref{eq:apriori_intro} is based on our analysis of regularizations of the PDI} in~\cite{osborne2024regularization}, where we showed that the solution $(u,m)$ of~\eqref{eq:mfg_pdi_sys} can be approximated by the solution $(u_{\lambda},m_{\lambda})$ of the regularized MFG system of partial differential \emph{equations} 
\begin{equation}\label{eq:regularized_MFG}
	\begin{aligned}
		- \nu\Delta {u}_{\lambda}+{H}_{\lambda}(x,\nabla {u}_{\lambda})&={F}[{m}_{\lambda}](x) &&\text{in }\Omega,
		\\
		-\nu\Delta {m}_{\lambda} - \text{div}\left({m}_{\lambda}\frac{\partial {H}_{\lambda}}{\partial p}(x,\nabla {u}_{\lambda})\right) &= {G}(x) &&\text{in }\Omega, 
	\end{aligned}
\end{equation}
where the regularized Hamiltonians $H_{\lambda}$, $\lambda\in (0,1]$, is a suitably defined $C^{1,1}$ approximation of $H$, such that $H_{\lambda}\to H$ as $\lambda \to 0$.
For problems with strongly monotone couplings, we proved in~\cite{osborne2024regularization} that
\begin{equation}\label{eq:intro_reg_err_bound}
	\| u-u_{\lambda} \|_{H^1(\Omega)} + \|m-m_{\lambda}\|_{L^2(\Omega)}\lesssim  \omega(\lambda)^{\frac{1}{2}}, 
\end{equation}
{where $\omega(\lambda)\coloneqq \sup_{(x,p)\in\overline{\Omega}\times\mathbb{R}^d}|H(x,p)-H_{\lambda}(x,p)|$ quantifies the error introduced by regularization of the Hamiltonian,} {and where the hidden constant in~\eqref{eq:intro_reg_err_bound} is independent of $\lambda$.}
{In this work, we consider regularization of the Hamiltonian by the well-known Moreau--Yosida regularization, as detailed in Section~\ref{sec:regularization} below, in which case $\omega(\lambda) $ is of order $\lambda$, so~\eqref{eq:intro_reg_err_bound} gives an upper bound of order $\lambda^{\frac{1}{2}}$ for the error between $(u,m)$ and $(u_{\lambda},m_{\lambda})$.}
We will show below that a similar bound of order $\lambda^{\frac{1}{2}}$ also holds between the finite element approximation $(u_k,m_k)$ of the MFG PDI system~\eqref{eq:mfg_pdi_sys} and the finite element approximation $(u_{\klk},m_{\klk})$ of the regularized PDE system~\eqref{eq:regularized_MFG} where, for each $k\in \N$, {$\lambda_k\in (0,1]$} is a {mesh-dependent} regularization parameter associated to the mesh~$\Tk$, such that $\lambda_k\to 0$ as $k\to \infty$, see Lemma~\ref{lem:pdi_reg_discrete_lambda_rate} below. 
{Thus the analysis in this work can be summarized by the  diagram
\begin{equation}\label{eq:bounds_diagram}
\begin{array}{c c c}

(u,m) & & (u_k,m_k)
\\[0.2cm] \hspace{1.3cm} \Biggl \updownarrow \text{order } \lambda_k^{\frac{1}{2}}   & & \hspace{1.3cm} \Biggl \updownarrow  \text{order }  \lambda_k^{\frac{1}{2}}
   \\[0.6cm]
(u_{\lambda_k},m_{\lambda_k}) & \xleftrightarrow[ \text{order } \lambda_k^{\frac{1}{2}} + \lambda_k^{-1} h_k^\gamma  ]{}  & (u_{\klk},m_{\klk})
\end{array}
\end{equation}
where the indicated orders refer to the corresponding bounds for the $L^2$-norm of the difference in the density variables, and the bounds hold for $k$ sufficiently large. 
Note that the corresponding bounds for the $H^1$-norm of the value functions can be obtained from the $L^2$-norm bounds of the density functions via the stability of the HJB equation, c.f.\ Lemma~\ref{lem:PDI_HJB_stability} below.}
{It then follows from the triangle inequality, the stability of the HJB equation, and the above bounds that} $\norm{u-u_k}_{H^1(\Omega)}+\norm{m-m_k}_\Omega \lesssim \lambda_k^{\frac{1}{2}}+\lambda_k^{-1}h_k^\gamma$, {which yields~\eqref{eq:apriori_intro} upon choosing $\lambda_k= h_k^{2\gamma /3}$, c.f.\ Theorem~\ref{thm:main_convergence_rate} and its proof below.}
{The main challenge addressed here is to obtain the bound between $(u_{\lamk},m_{\lamk})$ and its approximation $(u_{\klk},m_{\klk})$, i.e.\ the lower part of the diagram~\eqref{eq:bounds_diagram} above, c.f.\ Lemma~\ref{lem:discrete_reg_l2_bound} below.}
{Note that the appearance of the negative power of $\lambda_k$ in this bound is to be expected} due to the fact that the Lipschitz constant of the partial derivative $\frac{\partial H_{\lambda_k}}{\partial p}$ grows as $\lambda_k^{-1}$ and thus diverges as $k\to \infty$.
{As a further contribution of this work, we also obtain rates of convergence between $(u,m)$ and $(u_{\klk},m_{\klk})$, see Corollary~\ref{cor:exact_to_discrete_reg} below, which is also particularly relevant for practical computations. Indeed, in practice it can be easier to compute $(u_{\klk},m_{\klk})$ instead of $(u_k,m_k)$ since the lack of continuity of the discretized inclusion problem is a challenge for discrete solvers. Thus, the analysis shows that regularization of the Hamiltonian can be used in practical computations whilst also retaining some quantitative control on the error.}

{
The outline of this paper is as follows. In Section \ref{sec-notation-fem-spaces} we set notation and detail assumptions on the model data for the MFG PDI \eqref{eq:mfg_pdi_sys}, as well as lay the setting for its discretization by a class of stabilized finite element methods. Section \ref{sec-eq:PDI_weakform-def-and-disc} covers the weak formulation of the MFG PDI and its finite element discretization. The weak formulation of the regularized problem \eqref{eq:regularized_MFG} and its finite element discretization are given in Section \ref{sec:regularization}. We dedicate Section \ref{sec:main_results} to presenting our main result Theorem~\ref{thm:main_convergence_rate} and its corollary. Section \ref{sec-proof-of-key-lemma} is devoted to proving the main results, in particular the crucial Lemma~\ref{lem:discrete_reg_l2_bound}.}

\section{Notation and setting}\label{sec-notation-fem-spaces}

\paragraph{{Basic notation.}} We denote $\mathbb{N}\coloneqq \{1,2,3,\cdots\}$.
Let the space dimension $\dim\in \mathbb{N}$ be fixed.
Let $\Rddsym$ denote the space of symmetric matrices in $\R^{\dim\times\dim}$, and we let $\Rddsymp$ denote the cone of positive semi-definite matrices in $\Rddsym$. We equip the space of matrices $\R^{\dim\times\dim}$ with the Frobenius norm $|\cdot|$.
For a Lebesgue measurable set $\omega \subset \mathbb{R}^d$, {let $(\cdot,\cdot)_{\omega}$ denote the standard inner product for scalar functions in $L^2(\omega)$ and vector fields in $L^2(\omega;\R^\dim)$, where the arguments will distinguish between these two cases, and let $\lVert \cdot \rVert_{\omega}$ denote the $L^2$-norm induced by this inner product.} Let $\Omega$ be a open, bounded, connected polytopal subset of $\mathbb{R}^d$ with Lipschitz boundary $\partial \Omega$.
{In the following, we shall use the classical Sobolev spaces $H^s(\Omega)$, $s\geq 0$, with corresponding norms $\norm{\cdot}_{H^s(\Omega)}$; in particular, the fractional-order Sobolev space $H^s(\Omega)$ for noninteger $s$ can be defined by interpolation of spaces, c.f.~\cite{AdamsFournier03}.
Let $H^1_0(\Omega)$ denote the closure of $C^\infty_0(\Omega)$ in $H^1(\Omega)$. The Poincar\'e inequality shows that there exists a nonnegative constant $C_P$ such that $\ \norm{v}_{\Omega}\leq C_P \norm{\nabla v}_\Omega$ for all $v\in H^1_0(\Omega)$. 
Let $H^{-1}(\Omega)$ denote the dual space of $H^1_0(\Omega)$, with natural norm $\norm{\Psi}_{H^{-1}(\Omega)}\coloneqq \sup_{\norm{\nabla v}_\Omega\leq 1} \langle \Psi, v\rangle_{H^{-1}\times H^1_0} $, where $\langle \cdot,\cdot \rangle_{H^{-1}\times H^1_0}$ denotes the duality pairing between $H^{-1}(\Omega)$ and $H^1_0(\Omega)$, and where the supremum ranges over all $v\in H^1_0(\Omega)$.
}

{\paragraph{Regularity of solutions of Poisson's equation.}}
{In the following analysis, we will use some known regularity results for the solution of Poisson's equation in the scale of Sobolev spaces. 
The main property that we will use here is that there exists some $\gamma \in [\frac{1}{2},1]$ and a constant $\Creg$, depending only on $\Omega$
 and on $\gamma$, such that, given any $g\in L^2(\Omega)$,the unique $w\in H^1_0(\Omega)$ that solves $(\nabla w,\nabla v)_{\Omega}=(g,v)_{\Omega}$ for all $v\in H^1_0(\Omega)$ satisfies $w\in H^{1+\gamma}(\Omega)$, and
\begin{equation}\label{eq:elliptic_regularity}
\norm{w}_{H^{1+\gamma}(\Omega)} \leq \Creg \norm{g}_{L^2(\Omega)}.
\end{equation}
The convergence rates that we will derive below will be expressed in terms of the regularity order $\gamma$. 
In particular, the fact that one can take $\gamma\geq \frac{1}{2}$ is a consequence of the $H^{3/2}$-regularity of solutions of Poisson's equation with a homogeneous Dirichlet boundary condition and source term in $L^2(\Omega)$ on general Lipschitz domains, see~\cite[Theorem~B]{JerisonKenig1995}.
In the case of a convex domain $\Omega$, it is well-known that one can take $\gamma=1$ as a result of the $H^2$-regularity of the solution in this case, see for instance~\cite[Theorem~3.2.1.2]{Grisvard2011}.
In the case of a nonconvex polygonal domain $\Omega$ in two space dimensions without slits, solutions have regularity in $H^{s}(\Omega)$ for all $s < 1 + \frac{\pi}{\omega}$, where $\omega$ denotes the maximum interior angle formed at the corners of the boundary of $\partial\Omega$; this can be deduced as a consequence of~\cite[Theorem~4.4.3.7]{Grisvard2011}.
Remark that $\frac{1}{2}<\frac{\pi}{\omega}<1$ when $\Omega$ is a nonconvex polygon. In such a case one can then take any $\gamma < \frac{\pi}{\omega}$.
}

\subsection{Model data}\label{sec:model_data}

Let the diffusion $\nu>0$ be constant, and let $G\in H^{-1}(\Omega)$ be of the form $G=g_0-\nabla \cdot {g_1}$ with $g_0\in L^{q/2}(\Omega)$ and ${g_1}\in L^q(\Omega;\mathbb{R}^d)$ for some $q>d$. We further assume that $G\in H^{-1}(\Omega)$ is nonnegative in the sense of distributions, i.e.\
$\langle G, v\rangle_{H^{-1}\times H_0^1} \geq 0$ for all functions $v\in H_0^1(\Omega)$ that are nonnegative a.e.\ in $\Omega$.
We suppose that $F:L^2(\Omega)\to L^2(\Omega)$ is a Lipschitz continuous operator that satisfies
\begin{equation}\label{eq:F_lipschitz}
	\|F[m_1]-F[m_2]\|_{\Omega}\leq L_F\|m_1-m_2\|_{\Omega}\quad\forall m_1,m_2\in  L^2(\Omega).
\end{equation}
for some constant $L_F\geq 0$. It is clear therefore that
\begin{equation}\label{eq:F_linear_growth}
	\|F[\overline{m}]\|_{\Omega}\leq C_F\left(\|\overline{m}\|_{\Omega}+1\right)\quad\forall \overline{m}\in L^2(\Omega),
\end{equation}
where $C_F\geq 0$ is a constant. In addition, we assume that $F$ is strongly monotone on $H^1_0(\Omega)$ w.r.t {to the norm $\norm{\cdot}_{\Omega}$}, in the sense that there exists a constant $c_{F}>0$ such that 
\begin{equation}\label{eq:F_strong_mono}
	 (F[m_1]-F[m_2],m_1-m_2)_{\Omega} \geq c_{F}\|m_1-m_2\|_{\Omega}^{2},
\end{equation}for all $m_1,\,m_2\in H_0^1(\Omega)$.
{We also stress that this strong monotonicity condition is only required for arguments of $F$ in the smaller space $H^1_0(\Omega)$, and not the whole space $L^2(\Omega)$. We refer the reader to~\cite[Section~2]{osborne2022analysis} for} some concrete examples of the operator $F$ that satisfy these conditions.

{Motivated by the underlying optimal control problem, we suppose that the Hamiltonian~$H$ appearing in the MFG PDI system~\eqref{eq:mfg_pdi_sys} is defined by
	\begin{equation}\label{eq:Hamiltonian_def}
		H(x,p)\coloneqq \sup_{\alpha\in\mathcal{A}}\left\{b(x,\alpha)\cdot p-f(x,\alpha)\right\}\quad \forall (x,p)\in\overline{\Omega}\times\mathbb{R}^d,
	\end{equation}
	where} $b$ and $f$ are uniformly continuous on $\overline{\Omega}\times \mathcal{A}$ with $\mathcal{A}$ a compact metric space.
It follows that the Hamiltonian $H$ is Lipschitz continuous and satisfies 
\begin{equation}\label{eq:Hamiltonian_Lipschitz}
	|H(x,p)-H(x,q)|\leq L_H|p-q|\quad\forall(x,p,q)\in\overline{\Omega}\times\mathbb{R}^d\times\mathbb{R}^d,
\end{equation}
with {some constant $L_H$}, {for instance one can take $L_H\coloneqq\|b\|_{C(\overline{\Omega}\times\mathcal{A};\mathbb{R}^d)}$.} We assume throughout this article that $L_H>0$, since otherwise the analysis that we develop in the sequel will carry through via a substantially simpler argument.
We deduce from~\eqref{eq:Hamiltonian_Lipschitz} that there exists a constant $C_H\geq 0$ such that
\begin{equation}\label{eq:Hamiltonian_linear_growth}
	|H(x,p)|\leq C_H(|p|+1)\quad\forall (x,p)\in \overline{\Omega}\times\mathbb{R}^d.
\end{equation}
It is clear from~\eqref{eq:Hamiltonian_Lipschitz} that the mapping $v\mapsto H(\cdot,\nabla v)$ is Lipschitz continuous from $H^1(\Omega)$ into $L^2(\Omega)$. We will often abbreviate this composition by writing instead $H[\nabla v]\coloneqq H(\cdot,\nabla v)$ a.e.\ in $\Omega$.

\paragraph{Subdifferential of the Hamiltonian.}Given arbitrary sets $A$ and $B$, an operator $\mathcal{M}$ that maps each point $x\in A$ to a \emph{subset} of $B$ is called a \emph{set-valued map from $A$ to $B$}, and we write $\mathcal{M}:A\rightrightarrows B$. For the Hamiltonian given by~\eqref{eq:Hamiltonian_def} its point-wise  subdifferential with respect to $p$ is the set-valued map $ \partial_p H\colon\Omega\times\mathbb{R}^d\rightrightarrows\mathbb{R}^d$ defined by 
\begin{equation}\label{eq:H_subdifferential_def}
	\partial_p H(x,p)\coloneqq\left\{\tilde{b}\in\mathbb{R}^d:H(x,q)\geq H(x,p)+\tilde{b}\cdot(q-p),\quad\forall q\in\mathbb{R}^d\right\}.
\end{equation}
Note that $\partial_p H(x,p)$ is nonempty for all $x\in\Omega$ and $p\in\mathbb{R}^d$ because $H$ is real-valued and convex in $p$ for each fixed $x\in\Omega$. 
Note also that the subdifferential $\partial_p H$ is uniformly bounded since~\eqref{eq:Hamiltonian_Lipschitz} implies that for all $(x,p)\in\Omega\times \mathbb{R}^d$, the set $\partial_p H(x,p)$ is contained in the closed ball of radius $L_H=\lVert b \rVert_{C(\overline{\Omega}\times\mathcal{A};\mathbb{R}^d)}$ centred at the origin.

{We now define a set-valued mapping for measurable selections of the subdifferential when composed with the gradients of weakly differentiable functions.}
Given a function $v\in W^{1,1}(\Omega)$, we say that a real-valued vector field $\tilde{b}:\Omega\to\mathbb{R}^d$ is a \emph{measurable selection of $\partial_pH(\cdot,\nabla v)$} if $\tilde{b}$ is Lebesgue measurable and $\tilde{b}(x)\in \partial_p H(x,\nabla v(x))$ for a.e.\ $x\in \Omega.$
The uniform boundedness of the subdifferential sets implies that any measurable selection $\tilde{b}$ of $\partial_p H(\cdot,\nabla v)$ must belong to $L^\infty(\Omega;\mathbb{R}^d)$. 
Therefore, given $v\in W^{1,1}(\Omega)$ we can consider the set of all measurable selections of $\partial_p H(\cdot,\nabla v)$, which gives {rise to the set-valued map} $D_pH\colon W^{1,1}(\Omega)\rightrightarrows L^{\infty}(\Omega;\mathbb{R}^d)$ defined by
\begin{equation}
	D_pH[v]\coloneqq \left\{\tilde{b}\in L^{\infty}(\Omega;\mathbb{R}^d):\tilde{b}(x)\in\partial_pH(x,\nabla v(x)) \text{ \emph{for a.e.\ }}x\in\Omega\right\}.
\end{equation}
In \cite[Lemma 4.3]{osborne2022analysis} it was shown that $D_pH[v]$ is a nonempty subset of $L^{\infty}(\Omega;\mathbb{R}^d)$ for each $v$ in $ W^{1,1}(\Omega)$. 

{We also introduce} the set {of optimal feedback controls in $\mathcal{A}$, i.e.\ the set of controls that achieve the supremum in~\eqref{eq:Hamiltonian_def}.}
Define the set-valued map $\Lambda\colon\Omega\times\mathbb{R}^d\rightrightarrows \mathcal{A}$ via
\begin{equation}\label{eq:H_argmax_set}
	\Lambda(x,p)\coloneqq\text{argmax}_{\alpha\in\mathcal{A}}\{b(x,\alpha)\cdot p-f(x,\alpha)\}.
\end{equation}
Following \cite{osborne2022analysis}, for each $v\in W^{1,1}(\Omega)$, we define the set $\Lambda[v]$ of all Lebesgue measurable functions $\alpha^*:\Omega\to\mathcal{A}$ that satisfy $\alpha^*(x)\in\Lambda(x,\nabla v(x))$ for a.e.\ $x\in\Omega.$ We will refer to each element of $\Lambda[v]$ as \emph{a measurable selection of }$\Lambda(\cdot,\nabla v(\cdot))$. 
It is known that~$\Lambda[v]$ is nonempty for all $v\in W^{1,1}(\Omega)$, a result which ultimately rests upon the Kuratowski--Ryll-Nardzewski theorem~\cite{kuratowski1965general} on measurable selections, see also~\cite[Appendix~B]{smears2015thesis} for a detailed proof.
{In the later analysis, we} will make use of the following semismoothness result for the Hamiltonians, which was first shown in~\cite[Theorem~13]{smears2014discontinuous} already for the more general case of fully nonlinear second~order HJB operators, see also \cite[Lemma~7.1]{osborne2024regularization}.

\begin{lemma}[\cite{smears2014discontinuous,osborne2024regularization}]\label{lem:H_semismooth}
	Let $v\in H^1(\Omega)$ be given. For each $\epsilon>0$, there exists a $R>0$, depending only on $v$, $\epsilon$, $\Omega$, and $H$, such that 
	\begin{equation}\label{eq:H_semismooth}
		\sup_{\alpha\in {\Lambda[w]}}\left\|H[\nabla w] - H[\nabla v] - b(\cdot,\alpha)\cdot \nabla (w-v)\right\|_{H^{-1}(\Omega)} \leq \epsilon\|w - v\|_{H^1(\Omega)},
	\end{equation}
	for all $w\in H^1(\Omega)$ that satisfy $\|w- v\|_{H^1(\Omega)}\leq R$.
\end{lemma}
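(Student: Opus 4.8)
The plan is to derive \eqref{eq:H_semismooth} as a consequence of the uniform continuity of $b$ and $f$ on $\overline{\Omega}\times\mathcal{A}$, combined with a compactness argument that controls the behaviour of the optimal controls on the (small) set where $\nabla w$ and $\nabla v$ differ substantially. The starting point is the pointwise identity: for a.e.\ $x\in\Omega$ and any $\alpha\in\Lambda[w]$, by the very definition \eqref{eq:H_argmax_set} of $\Lambda$ we have $H(x,\nabla w(x)) = b(x,\alpha(x))\cdot\nabla w(x) - f(x,\alpha(x))$, while by the supremum definition \eqref{eq:Hamiltonian_def} of $H$ we have $H(x,\nabla v(x)) \geq b(x,\alpha(x))\cdot\nabla v(x) - f(x,\alpha(x))$. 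Subtracting gives the one-sided pointwise bound $H(x,\nabla w(x)) - H(x,\nabla v(x)) - b(x,\alpha(x))\cdot\nabla(w-v)(x) \leq 0$ a.e.\ in $\Omega$. The content of the lemma is therefore to bound the quantity from below, i.e.\ to show that this nonpositive pointwise residual, when tested against an arbitrary $\varphi\in H^1_0(\Omega)$ with $\|\nabla\varphi\|_\Omega\le 1$, cannot be too negative.

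For the lower bound I would test the residual against $\varphi\in H^1_0(\Omega)$ and estimate $\big|\int_\Omega \big(H[\nabla w]-H[\nabla v]-b(\cdot,\alpha)\cdot\nabla(w-v)\big)\varphi\big|$. The key step is to split $\Omega$ into the region $\{\,|\nabla w - \nabla v|\le\delta\,\}$ and its complement, for a threshold $\delta>0$ to be chosen. On the good set, for $x$ in that set and with $q=\nabla v(x)$, $p=\nabla w(x)$, one uses that $b(x,\alpha(x))\in\partial_p H(x,p)$ (since $\alpha(x)\in\Lambda(x,p)$) to write $H(x,p)-H(x,q) \geq b(x,\alpha(x))\cdot(p-q)$ — which recovers the trivial direction — and for the other direction one uses that $\partial_p H$ has a modulus of continuity in $p$ inherited from the uniform continuity of $b$: specifically, $H(x,q)-H(x,p)\geq \tilde b\cdot(q-p)$ for some $\tilde b\in\partial_p H(x,q)$, and $|\tilde b - b(x,\alpha(x))|$ is small when $|p-q|$ is small by a uniform-continuity / upper-semicontinuity argument on $\Lambda$. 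This yields a pointwise bound of the form $|\,\text{residual}\,|\le \omega_b(\delta)\,|\nabla(w-v)|$ on the good set, where $\omega_b$ is a modulus of continuity, giving a contribution $\le \omega_b(\delta)\|\nabla(w-v)\|_\Omega\|\nabla\varphi\|_\Omega$. On the bad set $B_\delta\coloneqq\{|\nabla w-\nabla v|>\delta\}$, one only uses the crude bounds $|H[\nabla w]-H[\nabla v]|\le L_H|\nabla(w-v)|$ and $|b(\cdot,\alpha)|\le L_H$, so the integrand is bounded by $2L_H|\nabla(w-v)|$ there, and by Cauchy–Schwarz the contribution is $\le 2L_H\|\nabla(w-v)\|_{L^2(B_\delta)}\|\varphi\|_{L^4(\Omega)}\cdot|B_\delta|^{1/4}$ or, more simply, $\le 2L_H\|\nabla(w-v)\|_\Omega\,\|\varphi\|_{L^\infty}$-type arguments do not work; instead one controls $|B_\delta|$ and uses a Sobolev embedding of $\varphi$ into $L^{2d/(d-2)}$ (for $d\ge 3$; the cases $d=1,2$ are easier). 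Since $\|w-v\|_{H^1}\le R$ is small, $|B_\delta|\le \delta^{-2}\|\nabla(w-v)\|_\Omega^2 \le \delta^{-2}R^2$ can be made small by choosing $R$ small relative to $\delta$, making the bad-set contribution $\le \eta(R,\delta)\|w-v\|_{H^1}$ with $\eta\to0$ as $R\to0$ for fixed $\delta$.

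Finally, given $\epsilon>0$, I would first fix $\delta$ small enough that $\omega_b(\delta)\le\epsilon/2$, then fix $R>0$ small enough (depending on $\delta$, hence on $\epsilon$, $v$, $\Omega$, $H$) that the bad-set contribution is $\le(\epsilon/2)\|w-v\|_{H^1}$ for all $w$ with $\|w-v\|_{H^1}\le R$; taking the supremum over $\varphi$ with $\|\nabla\varphi\|_\Omega\le1$ and then over $\alpha\in\Lambda[w]$ gives \eqref{eq:H_semismooth}. The main obstacle is the bad-set estimate: one must quantify that the Lebesgue measure of $\{|\nabla w-\nabla v|>\delta\}$ is small and combine this with a Sobolev embedding for the test function $\varphi$ to absorb the resulting term into $\epsilon\|w-v\|_{H^1}$; this is where the restriction to $H^{-1}(\Omega)$ on the left-hand side (rather than $L^2$) is essential, and where the dependence of $R$ on $v$ (through $\nabla v$, not merely its norm) enters, since the modulus of the map $p\mapsto\partial_pH(x,p)$ and the Chebyshev-type bound on $|B_\delta|$ interact with the distribution function of $|\nabla v|$. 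Since this lemma is quoted from \cite{smears2014discontinuous,osborne2024regularization}, I would in fact simply cite those references for the full argument rather than reproduce it.
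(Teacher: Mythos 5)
The paper does not prove this lemma itself; it quotes it from~\cite{smears2014discontinuous} (Theorem~13) and~\cite{osborne2024regularization} (Lemma~7.1), where the argument is a qualitative contradiction/compactness one (a.e.\ convergent subsequences, upper semicontinuity of the argmax map, Egorov/dominated convergence), which is also why the result carries no rate. Your one-sided bound (residual $\leq 0$ a.e.) and your bad-set estimate via Chebyshev plus the Sobolev embedding of the test function are fine. The genuine gap is the good-set step: the claimed pointwise bound $\lvert H(x,p)-H(x,q)-b(x,\alpha(x))\cdot(p-q)\rvert \leq \omega_b(\delta)\,\lvert p-q\rvert$ for $\lvert p-q\rvert\leq\delta$, justified by ``uniform continuity / upper semicontinuity of $\Lambda$'', is false precisely because $H$ is nondifferentiable. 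Upper semicontinuity of $p\mapsto\partial_pH(x,p)$ holds only with a neighbourhood size that degenerates as $q$ approaches a point of nondifferentiability, so there is no modulus uniform in $q=\nabla v(x)$. Concretely, for $H(p)=\lvert p\rvert$ in one dimension take $q=\delta/2$, $p=-\delta/2$: then $b=\operatorname{sign}(p)=-1$ and the residual equals $-\lvert p-q\rvert$, with no small factor $\omega_b(\delta)$, even though $\lvert p-q\rvert=\delta$ is as small as you like.

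To repair this along your lines you would need a further decomposition of the good set according to the distance of $\nabla v(x)$ from the region where the semicontinuity modulus degenerates, and an argument that the measure of that exceptional set is small for the \emph{fixed} function $v$ (equi-integrability of $\lvert\nabla v\rvert^2$); this is exactly where the dependence of $R$ on $v$ itself, and not merely on $\lVert v\rVert_{H^1(\Omega)}$, enters, and for a general Bellman Hamiltonian (convex and Lipschitz in $p$, but with no piecewise-smooth structure) it is not clear this can be made quantitative at all. The cited proofs avoid the issue by arguing by contradiction: assuming the estimate fails along a sequence $w_j\to v$ in $H^1(\Omega)$, extracting an a.e.\ convergent subsequence of $\nabla w_j$, and using the pointwise one-sided bound together with upper semicontinuity of $\Lambda$ and Egorov-type arguments to reach a contradiction. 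Since the paper itself only cites the references, deferring to them (as you suggest at the end) is acceptable, but the sketch as written does not constitute a proof because its central pointwise estimate fails.
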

{We stress that the semismoothness result Lemma~\ref{lem:H_semismooth} does not require the Hamiltonian~$H$ to be differentiable. We also remark that it is essential in \eqref{eq:H_semismooth} that the supremum on the left-hand side ranges over $\alpha \in \Lambda[w]$, and it is not generally possible to replace this with $\alpha \in \Lambda[v]$ in the case of nondifferentiable Hamiltonians.}

{\subsection{Setting for discretization}\label{sec:setting_for_discretization}}

\paragraph{{Meshes and finite element spaces.}}
{For each $k\in\mathbb{N}$, let $\mathcal{T}_k$ denote a conforming simplicial mesh of the polytope ${\Omega}\subset\R^d$ (c.f.\ \cite[p.~51]{Ciarlet1978}). We assume that the sequence of meshes $\{\mathcal{T}_k\}_{k\in\mathbb{N}}$ is nested, in the sense that each element in $\mathcal{T}_{k+1}$ is either in $\Tk$ or is a subdivision of an element in $\Tk$. This assumption on nestedness of the sequence $\{\mathcal{T}_k\}_{k\in\mathbb{N}}$ will in particular play an implicit role in the analysis of this paper in order to apply some results from previous works, see Remark~\ref{rem:aprioriconvergence}. Given $k\in\mathbb{N}$ and an element $K\in \Tk$, we let $\diam K$ denote the diameter of $K$ w.r.t.\ the Euclidean distance on $\R^d$. The mesh-size function  $h_{\Tk}\in L^\infty(\Omega)$ associated with the mesh $\mathcal{T}_k$ is defined by  $h_{\Tk}|_K\coloneqq\diam K$ for each element $K\in\Tk$. For each $K\in\Tk$, we let $\rho_K$ denote the radius of the largest inscribed ball in $K$. In this paper we assume uniform shape-regularity of the sequence of meshes $\{\Tk\}_{k\in\N}$, which is to say that there exists a number $\delta>1$, independent of $k\in\mathbb{N}$, such that $h_{\Tk}|_K\leq \delta \rho_{K}$ for all $K\in\Tk$ and for all $k\in\N$. The mesh-size $h_k$ of $\Tk$ is defined as the maximum element diamter in $\Tk$: $h_k\coloneqq \norm{h_{\Tk}}_{L^\infty(\Omega)}$. We assume that $h_k\to 0$ as $k\to \infty$.

For each element $K\subset\R^d$, let $\mathcal{P}_1(K)$ denote the vector space of $d$-variate real-valued polynomials on $K$ of total degree at most one. Given $k\in\mathbb{N}$, we let 
\begin{equation}\label{eq:FEM_space_def}
	V_k \coloneqq \{v_k \in H^1_0(\Omega):\; v_k|_K \in \mathcal{P}_1(K) \quad \forall K\in\mathcal{T}_k \},
\end{equation}
which is the finite element space of $H^1_0$-conforming piecewise affine functions corresponding to the mesh $\Tk$. We endow the space $V_k$ with the standard norm on $H_0^1(\Omega)$, which we denote by $\|w_k\|_{V_k}\coloneqq \|w_k\|_{H^1(\Omega)}$ for $w_k \in V_k$. Note that the union $\cup_{k\in\mathbb{N}}V_k$ forms a dense subspace of $H_0^1(\Omega)$ since the mesh-size $h_k\to 0$ as $k\to\infty$.

For each $k\in \N$, we denote the set of all vertices of the mesh $\Tk$ by $\calVk$. We let $\calVki\coloneqq\calVk\cap \Omega$ denote the set of vertices of $\mathcal{T}_k$ that lie in the interior of the domain $\Omega$. We denote an enumeration of $\calVk$ by the set $\{x_i\}_{i=1}^{\Card\calVk}$, which we assume, without loss of generality, is ordered in such a way that $x_i\in \calVki$ if and only if $1\leq i\leq M_k\coloneqq \Card\calVki$. We then let the standard nodal Lagrange basis for the space $V_k$ be denoted by $\{ \xi_i \}_{i=1}^{M_k}$. The dependence on the mesh index $k$ is omitted from the notation of the nodal basis functions as there will be no risk of confusion in the upcoming analysis.}

Let $V_k^*$ denote the dual space of $V_k$, equipped with the dual norm $\norm{\cdot}_{V_k^*}$  defined by
\begin{equation}\label{eq:Vkstar_norm}
	\|\Psi \|_{V_k^*}\coloneqq \sup_{ v_k\in V_k \setminus \{0\} } \frac{\langle \Psi ,v_k\rangle_{V_k^*\times V_k}}{\norm{v_k}_{H^1(\Omega)} }\quad\forall \Psi \in V_k^*.
\end{equation}
Given an operator $L:V_k\to V_k^*$ we define its adjoint operator $L^*:V_k\to V_k^*$ by the pairing $\langle L^*w_k, v_k\rangle_{V_k^*\times V_k}\coloneqq\langle L v_k, w_k\rangle_{V_k^*\times V_k}$ for all $w_k,v_k\in V_{k}$.

\paragraph{Stabilization and discrete maximum principle.}

The class of stabilized FEM that we consider here is based on the principle of discretizing a general linear differential operator of the form $-\nu \Delta + \tilde{b}\cdot \nabla$, where $\tilde{b}\in L^\infty(\Omega;\R^d)$, by introducing a discrete linear operator $L\colon V_k\to V_k^*$ defined by
\begin{equation}\label{eq:discrete_lin_diff_op}
\begin{aligned}
\langle L w_k , v_k\rangle_{V_k^*\times V_k}\coloneqq (A_k \nabla w_k, \nabla v_k)_\Omega + (\tilde{b}\cdot \nabla w_k, v_k)_\Omega &&& \forall w_k,\,v_k \in V_k,
\end{aligned}
\end{equation}
where $A_k \in L^\infty\left(\Omega;\Rddsymp\right)$ is a numerical diffusion tensor of the form
\begin{equation}\label{eq:Ak_def}
	A_k \coloneqq \nu \mathbb{I}_d+D_k,
\end{equation}
with $\mathbb{I}_d$ denoting the $\dim\times\dim$ identity matrix and $D_k \in L^\infty\left(\Omega;\Rddsymp\right)$ is a chosen stabilization term. 
The following analysis is formulated in terms of two main assumptions on the choice of stabilization $D_k$.
The first main assumption guarantees the consistency of the stabilization, by requiring that the additional stabilization term involving $\Dk$ is vanishing in the small-mesh limit with optimal order:
\begin{enumerate}[label={(H\arabic*)},resume]
	\item The matrix-valued function $D_k\in L^\infty\left(\Omega;\Rddsymp\right)$ for all $k\in\mathbb{N}$ and there exists a constant $\CD$, independent of $k$, such that $\abs{D_k} \leq \CD h_{\Tk}$ a.e.\ in $\Omega$, for all $k\in\N$.\label{ass:bounded}
\end{enumerate}

The second main assumption that we state below requires that the stabilization should enforce a discrete maximum principle for a sufficiently large class of differential operators.
For each $k\in \N$, let $W(V_k,\Dk)$ denote the set of all linear operators $L:V_{k}\to V_{k}^*$ of the form~\eqref{eq:discrete_lin_diff_op} where $\tilde{b}\in L^\infty(\Omega;\R^d)$ satisfies $\|\tilde{b}\|_{L^{\infty}(\Omega;\R^d)}\leq L_H$, and where $A_k$ is given by~\eqref{eq:Ak_def}.
Recall that $L_H$ is the Lipschitz constant of the Hamiltonian, given in~\eqref{eq:Hamiltonian_Lipschitz} above.
Since we are considering here problems with homogeneous Dirichlet boundary conditions, a discrete linear operator $L:V_k\to V_k^*$ is said to satisfy the \emph{discrete maximum principle} (DMP) provided that the following condition holds: if $v_k\in V_{k}$ and $\langle Lv_k,\xi_i\rangle_{V_k^*\times V_k}\geq 0$ for all $ i\in\{1,\cdots,M_k\}$, then $v_k\geq 0$ in $\Omega$. 
{Recall that $\{\xi_i\}_{i=1}^{M_k}$ denotes the nodal basis functions for $V_k$.}
Note that a linear operator $L\colon V_k\to V_k^*$ satisfies the DMP if and only if its adjoint $L^*$ also satisfies the DMP. Moreover, if a linear operator $L\colon V_k\to V_k^*$ satisfies the DMP, then $L$ is invertible.
We now state the second main assumption on the choice of stabilization:
\begin{enumerate}[label={(H\arabic*)},resume]
	\item For every $k\in\mathbb{N}$, every $L\in W(V_k,\Dk)$ satisfies the discrete maximum principle.\label{ass:dmp}
\end{enumerate}
Under the above assumptions \ref{ass:bounded} and \ref{ass:dmp}, it is shown in~\cite[Lemma 3.1]{osborne2024near} that we have the following uniform stability bound for operators in the class $W(V_k,\Dk)$: there exists a constant $\Cstab>0$ that is independent of $k\in\mathbb{N}$, such that, for any $k\in \N$ and any $L\in W(V_k,D_k)$, 
\begin{subequations}
\begin{alignat}{2} 
\label{eq:infsup_stab_linop_1}
\norm{w_k}_{H^1(\Omega)} &\leq \Cstab \norm{L w_k}_{V_k^*}  &\quad&\forall w_k\in V_k,
\\ 
\label{eq:infsup_stab_linop_2}
\norm{w_k}_{H^1(\Omega)} &\leq \Cstab \norm{L^* w_k}_{V_k^*} &\quad&\forall w_k\in V_k,
\end{alignat}
\end{subequations}

\begin{remark}[Examples of stabilizations]
In practice, there is a range of possible stabilizations that satisfy the assumptions~\ref{ass:bounded} and \ref{ass:dmp}. Several examples are shown in~\cite{osborne2022analysis,osborne2023finite,osborne2024near}, including an isotropic stabilization on strictly acute meshes \cite{osborne2022analysis}, and the volume-based stabilization of \cite{osborne2023finite} for the more general case of meshes satisfying the Xu--Zikatanov condition~\cite{xu1999monotone}.
\end{remark}

{\paragraph{Notation for inequalities.}
{In order to avoid the proliferation of generic constants in the analysis,} we write $a\lesssim b$ for real numbers $a$ and $b$ if $a\leq C b$ for some constant $C$ that may depend on the problem data appearing in the MFG PDI \eqref{eq:mfg_pdi_sys} and some fixed parameters for the sequence of meshes $\{\Tk\}_{k\in\mathbb{N}}$ appearing below, such as the shape-regularity parameter $\delta$, but is otherwise independent of the regularization parameter $\lambda$ and the finite element mesh index $k\in\mathbb{N}$.}

\section{Weak formulation of MFG PDI and its discretization}\label{sec-eq:PDI_weakform-def-and-disc}

\subsection{Weak formulation of MFG PDI}
{We recall the definition of a weak solution of the MFG PDI~\eqref{eq:mfg_pdi_sys} that was introduced in~\cite{osborne2022analysis}, which is as follows:} \emph{A pair $(u,m)\in H_0^1(\Omega)\times H_0^1(\Omega)$ is a weak solution of~\eqref{eq:mfg_pdi_sys} if there exists a $\tilde{b}_*\in D_pH[u]$ such that}
\begin{subequations}\label{eq:PDI_weakform}
\begin{alignat}{2}
\nu(\nabla u,\nabla v)_\Omega+(H[\nabla u],v)_\Omega &=(F[m],v)_\Omega & \quad&\forall v\in H^1_0(\Omega), \label{eq:PDI_weakform_1}
\\ 
\nu(\nabla m,\nabla w)_\Omega+(m\tilde{b}_*,\nabla w)_\Omega &=\langle G,w\rangle_{H^{-1}\times H_0^1} & \quad&\forall w\in H^1_0(\Omega).\label{eq:PDI_weakform_2} 
\end{alignat} 
\end{subequations}
This definition only requires the existence of a suitable transport vector field $\tilde{b}_*\in D_pH[u]$, but its uniqueness is not required in general. {We refer the reader to~\cite[Section~3.3]{osborne2024thesis} for several examples showing that $\tilde{b}_*$ might be unique in some cases, but not in others.}

We recall that $G$ is nonnegative in the sense of distributions in $H^{-1}(\Omega)$, and we note that $F$ is strictly monotone on $H_0^1(\Omega)$. {Under the present conditions on the model data, c.f.\ Section \ref{sec-notation-fem-spaces}, Theorems 3.3 and 3.4 in \cite{osborne2022analysis,osborne2024erratum} imply that there exists a unique weak solution $(u,m)\in H_0^1(\Omega)\times H_0^1(\Omega)$ of the MFG PDI~\eqref{eq:mfg_pdi_sys}} which satisfies the bounds
\begin{subequations}\label{eq:PDI_solution_apriori_bounds}
\begin{align}
\|u\|_{H^1(\Omega)} &\lesssim 1+\|G\|_{H^{-1}(\Omega)}+\|f\|_{C({\overline{\Omega}\times\mathcal{A}})}, \label{eq:PDI_apriori_bounds_u}
\\
\|m\|_{H^1(\Omega)} &\lesssim \|G\|_{H^{-1}(\Omega)},\label{eq:PDI_apriori_bounds_m}
\end{align}
\end{subequations}
where the hidden constants depend only on $d$, $\Omega$, $\nu$, $L_H$,  and $L_F$.
{
Furthermore, it follows from the linear growth of $F$ and $H$ in \eqref{eq:F_linear_growth} and \eqref{eq:Hamiltonian_linear_growth}, respectively, and from the bounds above, that the HJB equation \eqref{eq:PDI_weakform_1} and the elliptic regularity bound~\eqref{eq:elliptic_regularity} imply in addition  $u\in H^{1+\gamma}(\Omega)$ with
\begin{equation}\label{eq:PDI_u_fractional_regularity}
\norm{u}_{H^{1+\gamma}(\Omega)} \lesssim 1 + \norm{G}_{H^{-1}(\Omega)} + \norm{f}_{C(\overline{\Omega}\times \mathcal{A})}.
\end{equation}
}

\subsection{Monotone finite element {discretization}}\label{subsec-FEM-mfg-pdi}

In order to accommodate a range of stabilized methods such as those in~\cite{osborne2023finite,osborne2022analysis,osborne2024erratum}, we consider the class of discretizations of the weak formulation~\eqref{eq:PDI_weakform} defined as follows: \emph{for each $k\in\mathbb{N}$, find $(u_k,m_k)\in V_{k}\times V_{k}$ such that there exists a $\tilde{b}_k\in D_pH[u_k]$ that satisfies}
\begin{subequations}\label{eq:pdi_fem_system}
\begin{alignat}{2}
(A_k\nabla u_k,\nabla v_k)_\Omega+(H[\nabla u_k],v_k)_\Omega &= ( F[m_k],v_k)_\Omega &\quad& \forall v_k\in V_k, \label{eq:pdi_fem_1}
\\
(A_k\nabla m_k,\nabla w_k)_\Omega+(m_k\tilde{b}_k,\nabla w_k)_\Omega &=\langle G,w_k\rangle_{H^{-1}\times H_0^1}  &\quad&\forall w_k\in V_k. \label{eq:pdi_fem_2}
\end{alignat}
\end{subequations}

\begin{remark}[Well-posedness of discrete approximations and convergence of \eqref{eq:pdi_fem_system}]\label{rem:aprioriconvergence}
{The existence and uniqueness of the numerical solutions is shown in~\cite[Theorems~5.2 \&~5.3]{osborne2022analysis,osborne2024erratum} for the case of strictly acute meshes. Following the arguments of the proofs in these results, it is straightforward to show that existence and uniqueness of the discrete solution also holds for the abstract class of methods considered here of the form~\eqref{eq:pdi_fem_system} under the assumptions~\ref{ass:bounded} and~\ref{ass:dmp}.}
{Moreover, in the current setting it is also straightforward to extend~\cite[Theorem 5.4, Corollary 5.5]{osborne2022analysis} on the convergence of the method to deduce} that $\{u_k\}_{k\in\mathbb{N}}$, $\{m_k\}_{k\in\mathbb{N}}$ are uniformly bounded in the $H^1$-norm with $u_k\to u$ in $H_0^1(\Omega)$ and $m_k\to m$ in $L^q(\Omega)$ where $q\in [1,\frac{2d}{d-2})$ as $k\to\infty$. 
{In particular, we have $m_k \to m$ in $L^2(\Omega)$ as $k\to \infty$ since $\frac{2d }{d-2} > 2$ for all $d \in \N$.}
{We note that we will use these results on plain convergence of the method in the analysis that comes below. Since the analysis from~\cite{osborne2022analysis,osborne2024erratum} assumes that the finite element spaces are nested, we therefore make the same assumption here.}
\end{remark}

{
As a first step towards bounding the error between the exact solution $(u,m)$ and its discrete approximations $(u_k,m_k)$, the following Lemma shows that the $H^1$-norm error $\norm{u-u_k}_{H^1(\Omega)}$ for the value function can be bounded in terms of the best-approximation error for $u$ from the finite element space, plus a consistency term that is related to the stabilization, and plus the $L^2$-norm error $\norm{m-m_{k}}_\Omega$ for the density function. 

\begin{lemma}[Bound on value function error in terms of the density function error]\label{lem:PDI_HJB_stability}
Suppose that~\ref{ass:bounded} and \ref{ass:dmp} hold. Then, for all $k$ sufficiently large,
\begin{equation}\label{eq:PDI_HJB_stability}
\|u - u_k\|_{H^1(\Omega)} \lesssim \inf_{\overline{u}_k\in V_k}\|u - \overline{u}_k\|_{H^1(\Omega)} + \|h_{\mathcal{T}_k}\nabla u\|_{\Omega} + \|m - m_k\|_{\Omega}.
\end{equation}
\end{lemma}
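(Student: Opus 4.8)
The plan is to derive the bound by testing the difference of the continuous HJB equation~\eqref{eq:PDI_weakform_1} and the discrete HJB equation~\eqref{eq:pdi_fem_1} against a suitable element of $V_k$, and to exploit the inf--sup stability~\eqref{eq:infsup_stab_linop_1} of a linearized operator in the class $W(V_k,\Dk)$. First I would introduce the best approximation, or more conveniently the elliptic/Galerkin-type projection $\overline{u}_k\in V_k$ of $u$ (or simply a near-best approximation, which suffices by~\eqref{eq:FEM_space_def} and the density of $\cup_k V_k$), and split $u-u_k = (u-\overline{u}_k)+(\overline{u}_k-u_k)$; the first term is controlled by the best-approximation error on the right-hand side of~\eqref{eq:PDI_HJB_stability}, so it remains to bound $e_k\coloneqq \overline{u}_k-u_k\in V_k$ in the $H^1$-norm.

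For $e_k$ the idea is to build a linear operator $L_k\in W(V_k,\Dk)$ whose action on $e_k$ we can estimate. By Lemma~\ref{lem:H_semismooth} (semismoothness of $H$), for any $\eps>0$ there is $R>0$ such that, picking $\alpha\in \Lambda[u_k]$, one has $H[\nabla u_k]-H[\nabla u]-b(\cdot,\alpha)\cdot\nabla(u_k-u)$ small in $H^{-1}(\Omega)$ relative to $\norm{u_k-u}_{H^1(\Omega)}$, valid once $\norm{u_k-u}_{H^1(\Omega)}\leq R$, which holds for $k$ large by the plain convergence $u_k\to u$ in $H^1_0(\Omega)$ recalled in Remark~\ref{rem:aprioriconvergence}. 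Define $L_k$ by~\eqref{eq:discrete_lin_diff_op} with drift $\tilde b = b(\cdot,\alpha)$ and tensor $A_k$; since $\norm{b(\cdot,\alpha)}_{L^\infty}\leq L_H$ by~\eqref{eq:Hamiltonian_Lipschitz}, we have $L_k\in W(V_k,\Dk)$, so~\eqref{eq:infsup_stab_linop_1} gives $\norm{e_k}_{H^1(\Omega)}\lesssim \norm{L_k e_k}_{V_k^*}$. The remaining work is to expand $\langle L_k e_k, v_k\rangle$ for $v_k\in V_k$: write $e_k = (\overline u_k - u)+(u-u_k)$, use the discrete equation~\eqref{eq:pdi_fem_1} and the continuous equation~\eqref{eq:PDI_weakform_1} tested against $v_k$, and the stabilization consistency term $(D_k\nabla u,\nabla v_k)_\Omega$ controlled via~\ref{ass:bounded} by $\norm{h_{\Tk}\nabla u}_\Omega\norm{\nabla v_k}_\Omega$. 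This produces four kinds of terms: (i) a best-approximation term $\lesssim \norm{u-\overline u_k}_{H^1(\Omega)}$ coming from $(A_k\nabla(\overline u_k-u),\nabla v_k)_\Omega$ and the drift term applied to $\overline u_k - u$; (ii) the stabilization consistency term $\lesssim \norm{h_{\Tk}\nabla u}_\Omega$; (iii) the coupling difference $(F[m]-F[m_k],v_k)_\Omega$, bounded by $L_F\norm{m-m_k}_\Omega$ via~\eqref{eq:F_lipschitz}; and (iv) the semismoothness remainder $\norm{H[\nabla u_k]-H[\nabla u]-b(\cdot,\alpha)\cdot\nabla(u_k-u)}_{H^{-1}(\Omega)}\leq \eps\norm{u_k-u}_{H^1(\Omega)}$.

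Combining, $\norm{e_k}_{H^1(\Omega)}\lesssim \norm{u-\overline u_k}_{H^1(\Omega)} + \norm{h_{\Tk}\nabla u}_\Omega + \norm{m-m_k}_\Omega + \eps\norm{u-u_k}_{H^1(\Omega)}$, and via the triangle inequality $\norm{u-u_k}_{H^1(\Omega)}\leq \norm{u-\overline u_k}_{H^1(\Omega)}+\norm{e_k}_{H^1(\Omega)}$ we absorb the $\eps\norm{u-u_k}_{H^1(\Omega)}$ term into the left-hand side by choosing $\eps$ small enough (say $\eps$ such that the implied constant times $\eps$ is $\le 1/2$), which is legitimate since $\eps$ is at our disposal before $R$ and hence before the threshold on $k$. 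Taking the infimum over $\overline u_k\in V_k$ yields~\eqref{eq:PDI_HJB_stability}. The main obstacle is step (iv): handling the nondifferentiable Hamiltonian. The key point is that one must linearize at the \emph{discrete} solution, using a selection $\alpha\in\Lambda[u_k]$ rather than $\Lambda[u]$ — as the remark after Lemma~\ref{lem:H_semismooth} emphasizes, this choice is essential — and then one must verify that the resulting linear operator still lies in $W(V_k,\Dk)$ so that the uniform inf--sup stability~\eqref{eq:infsup_stab_linop_1}, \eqref{eq:infsup_stab_linop_2} applies with a constant independent of $k$. Everything else is a standard Strang-type argument.
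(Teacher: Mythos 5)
Your proposal is correct and follows essentially the same route as the paper's proof: the same splitting $u-u_k=(u-\overline{u}_k)+(\overline{u}_k-u_k)$, the same linearized operator $L_k\in W(V_k,\Dk)$ built from a selection $\alpha_k\in\Lambda[u_k]$ so that the inf--sup bound~\eqref{eq:infsup_stab_linop_1} applies uniformly in $k$, the same use of the HJB equations and \ref{ass:bounded} to isolate the coupling, stabilization, and semismoothness terms, and the same absorption of the $\eps\|u-u_k\|_{H^1(\Omega)}$ remainder via Lemma~\ref{lem:H_semismooth} and the plain convergence $u_k\to u$ from Remark~\ref{rem:aprioriconvergence}. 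No gaps.
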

\begin{proof}
For each $k\in \N$, let $\alpha_k\colon \Omega\to \mathcal{A}$ be an arbitrary element of the set $\Lambda[u_k]$, where we recall that the set $\Lambda[u_k]$ is defined in Section~\ref{sec:model_data} above. 
Then, let $L_k\colon V_k\to V_k^*$ be the linear operator defined by $\langle L_k w_k , v_k\rangle_{V_k^*\times V_k}\coloneqq (A_k \nabla w_k,\nabla v_k)_\Omega+(\widetilde{b}_k \nabla w_k, v_k)_\Omega$ for all $w_k,\,v_k\in V_k$, where $\widetilde{b}_k(x)\coloneqq b(x,\alpha_k(x))$ for all $x\in\Omega$. 
Since $\norm{\widetilde{b}_k}_{L^\infty(\Omega,\R^d)}\leq L_H = \norm{b}_{C(\overline{\Omega}\times\mathcal{A})}$, it follows that $L_k \in W(V_k,\Dk)$, where it is recalled that the class $W(V_k,\Dk)$ is defined in Section~\ref{sec:setting_for_discretization} above.
Let $\overline{u}_k\in V_k$ be arbitrary.
It follows from~\eqref{eq:infsup_stab_linop_1} that $\norm{\overline{u}_k-u_k}_{H^1(\Omega)}\leq \Cstab \norm{ L_k (\overline{u}_k-u_k)}_{V_k^*}$. 
After considering the natural extension of the domain of $L_k$ to $H^1_0(\Omega)$, we add and subtract $\langle L_k u, v_k \rangle_{V_k^*\times V_k}$ for an arbitrary $v_k\in V_k$ to find that
\begin{equation}
\langle L_k (\overline{u}_k-u_k), v_k \rangle_{V_k^*\times V_k} = \langle L_k (\overline{u}_k-u),v_k\rangle_{V_k^*\times V_k}+ \langle L_k (u-u_k),v_k\rangle_{V_k^*\times V_k} \quad \forall v_k\in V_k.
\end{equation}
It is clear that $\abs{\langle L_k (\overline{u}_k-u),v_k\rangle_{V_k^*\times V_k}} \lesssim \norm{\overline{u}_k-u}_{H^1(\Omega)}\norm{v_k}_{H^1(\Omega)}$ for all $v_k\in V_k$. Then, using that $A_k$ is given by~\eqref{eq:Ak_def} and using the HJB equation~\eqref{eq:PDI_weakform_1} and its discrete counterpart~\eqref{eq:pdi_fem_1} (noting that $v_k\in V_k \subset H^1_0(\Omega)$ is a conforming discrete test function), we find that
\begin{multline}
\langle L_k (u-u_k),v_k\rangle_{V_k^*\times V_k}
 = (F[m]-F[m_k],v_k)_\Omega + (D_k \nabla u,\nabla v_k)_\Omega
	\\  + (H[\nabla u_k]-H[\nabla u]- \widetilde{b}_k\cdot\nabla(u_k-u),v_k)_\Omega,
\end{multline}
for all $v_k\in V_k$.
To bound the terms above, we first note that $|(F[m]-F[m_k],v_k)_\Omega|\leq L_F \norm{m-m_k}_\Omega \norm{v_k}_\Omega$ by Lipschitz continuity of $F$, c.f.\ \eqref{eq:F_lipschitz}.
Then, note that $|(D_k \nabla u,\nabla v_k)_\Omega| \leq  \CD \norm{h_{\Tk} \nabla u}_\Omega \norm{v_k}_{H^1(\Omega)}$ as a result of~\ref{ass:bounded}.
Recall from Remark~\ref{rem:aprioriconvergence} that $u_k\to u$ in $H^1_0(\Omega)$ as $k\to \infty$, and also that $\widetilde{b}_k = b(\cdot,\alpha_k)$ with $\alpha_k \in \Lambda[u_k]$ for each $k\in \N$.
Therefore, we can use Lemma~\ref{lem:H_semismooth} with $v=u$ and $\epsilon=\frac{1}{2\Cstab}$ to find that, for all $k$ sufficiently large,
\begin{equation}
\norm{ H[\nabla u_k]-H[\nabla u]- \widetilde{b}_k\cdot\nabla(u_k-u) }_{H^{-1}(\Omega)}\leq \frac{1}{2\Cstab}\norm{u-u_k}_{H^1(\Omega)}. 
\end{equation}
After using the triangle and the bounds above, it is then found that
\begin{equation}
\begin{split}
\norm{u-u_k}_{H^1(\Omega)} &\leq \norm{u-\overline{u}_k}_{H^1(\Omega)}+\norm{\overline{u}_k-u_k}_{H^1(\Omega)} \\
& \leq \norm{u-\overline{u}_k}_{H^1(\Omega)} + \Cstab \norm{L_k(\overline{u}_k-u_k)}_{V_k^*} 
\\ 
&\leq C \norm{u-\overline{u}_k} + \Cstab L_F\norm{m-m_k}_\Omega + \CD\Cstab \norm{h_{\Tk}\nabla u}_\Omega + \frac{1}{2}\norm{u-u_k}_{H^1(\Omega)},
\end{split}
\end{equation}
where the generic constant $C$ above depends only on $\Cstab$, $\nu$, $L_H$ and on $\CD$.
Therefore, we can absorb the final term on the right-hand side above up to a constant times the left-hand side, and we then conclude~\eqref{eq:PDI_HJB_stability} upon recalling that $\overline{u}_k \in V_k$ was arbitrary.
\end{proof}

Lemma~\ref{lem:PDI_HJB_stability} shows that in order to bound the joint error for both value and density functions, it remains only to bound the final term $\norm{m-m_k}_\Omega$ that appears on the right-hand side of \eqref{eq:PDI_HJB_stability}.

}

\section{Regularized continuous problem and its discretization}\label{sec:regularization}
In this section we introduce the regularized problems that will be employed in the \emph{a priori} error analysis.
The Moreau--Yosida regularization of the Hamiltonian and its main properties are summarized in Subsection~\ref{sec:moreau_yosida} below. 

\subsection{Moreau--Yosida Regularization of the Hamiltonian}\label{sec:moreau_yosida}

We consider here the Moreau--Yosida regularization of the Hamiltonian $H$ as follows: for each $\lambda\in (0,1]$, let $H_{\lambda}:\overline{\Omega}\times\mathbb{R}^d\to\mathbb{R}$ be defined by
\begin{equation}\label{eq:regularized_H_def}
	H_{\lambda}(x,p)\coloneqq \inf_{q\in\mathbb{R}^d}\left\{H(x,q) + \frac{1}{2\lambda }|q-p|^{2}\right\}.
\end{equation}
In the following, we will use some well-known elementary properties of Moreau--Yosida regularization, see e.g.\ \cite[Theorem 6.5.7]{aubinfrankowska1990setvalana}.
{First,} for each $\lambda\in (0,1]$, the function $H_{\lambda}$ is continuous on $\overline{\Omega}\times \R^\dim$ and additionally Lipschitz continuous in its second argument with
\begin{equation}\label{eq:reg_H_Lipschitz}
\begin{aligned}
|H_{\lambda}(x,p)-H_{\lambda}(x,q)|\leq L_H\lvert p-q\rvert &&&\forall(x,p,q)\in\overline{\Omega}\times\mathbb{R}^d\times\mathbb{R}^d,
\end{aligned}
\end{equation}
Note that the Lipschitz constant in~\eqref{eq:reg_H_Lipschitz} can be taken to be the same as the Lipschitz constant in the bound~\eqref{eq:Hamiltonian_Lipschitz} for $H$.
{Second,} for each $\lambda\in (0,1]$ and each $x\in\overline{\Omega}$, the function $\mathbb{R}^d\ni p\mapsto H_{\lambda}(x,p)$ is convex, and the partial derivative $\frac{\partial H_{\lambda}}{\partial p}:\overline{\Omega}\times\mathbb{R}^d\to\mathbb{R}^d$ exists, and is Lipschitz continuous w.r.t $p$, with
\begin{equation}\label{eq:reg_H_deriv_Lipschitz}
\left|\frac{\partial H_{\lambda}}{\partial p}(x,p)-\frac{\partial H_{\lambda}}{\partial p}(x,q)\right|\leq \frac{1}{\lambda}|p-q|\qquad\forall(x,p,q)\in\overline{\Omega}\times\mathbb{R}^d\times\mathbb{R}^d.
\end{equation}
{Third,} the error between $H$ and $H_\lambda$ satisfies the following inequality
\begin{equation}\label{eq:reg_H_approx}
\begin{aligned}
|H_{\lambda}(x,p) - H(x,p)|\leq {\frac{L_H^2\lambda }{2}}&& &\forall (x,p)\in \overline{\Omega}\times\mathbb{R}^d .
\end{aligned}
\end{equation} 
For a proof of~\eqref{eq:reg_H_approx}, see for instance~\cite{osborne2024regularization}.
Note that the bounds~\eqref{eq:reg_H_Lipschitz} and~\eqref{eq:reg_H_approx} imply the bound
\begin{equation}\label{eq:reg_H_linear_growth}
|H_{\lambda}(x,p)|
\leq \widetilde{C}_H\left(|p|+1\right)\quad \forall (x,p)\in \overline{\Omega}\times\mathbb{R}^d,
\end{equation}
for some constant $\widetilde{C}_H\geq 0$ independent of $\lambda\in (0,1]$, and 
{$\frac{\partial H_{\lambda}}{\partial p}$ is uniformly bounded with}
\begin{equation}\label{eq:reg_H_deriv_linf_bound}
\left|\frac{\partial H_{\lambda}}{\partial p}(x,p)\right|\leq L_H \quad \forall (x,p)\in \overline{\Omega}\times\mathbb{R}^d.
\end{equation}
Note also that the restriction of $\lambda$ to the interval $(0,1]$ is not essential, e.g.\ it can be replaced by some more general bounded set for which $0$ is a limit point.

\subsection{{Weak solutions of regularized problems}}
We now consider the weak formulation of the regularized problems~\eqref{eq:regularized_MFG} accompanied by homogeneous Dirichlet boundary conditions.
{For each $\lambda\in (0,1]$, let $({u}_{\lambda},{m}_{\lambda})\in H_0^1(\Omega)\times H_0^1(\Omega)$ denote the unique solution of}
\begin{subequations}\label{eq:regularized_weakform}
\begin{alignat}{2}
\nu(\nabla {u}_{\lambda},\nabla  v)_\Omega+(H_{\lambda}[\nabla {u}_{\lambda}],v)_\Omega 
 &= (F[{m}_{\lambda}],v)_\Omega  &\quad&\forall v\in H^1_0(\Omega),
\label{eq:regularized_weakform_1}
\\ 
\nu(\nabla {m}_{\lambda},\nabla w)_\Omega+\left({m}_{\lambda}\frac{\partial {H}_{\lambda}}{\partial p}[\nabla {u}_{\lambda}],\nabla w\right)_\Omega &=\langle {G},w\rangle_{H^{-1}\times H_0^1}  &\quad&\forall w \in H^1_0(\Omega).\label{eq:regularized_weakform_2} 
\end{alignat} 
\end{subequations}
{Note that the existence and uniqueness of $(u_\lambda,m_\lambda)$ is obtained, under the same hypotheses on the data as above, as a result of~\cite[Theorem 3.3 and 3.4]{osborne2022analysis}, which requires only the convexity and the Lipschitz continuity of the Hamiltonian with respect to the second variable, see also~\cite{osborne2024regularization}.}
Moreover, we have the following bounds on the norm of the solution, which are uniform with respect to the regularization parameter $\lambda$:
\begin{gather}
	\sup_{\lambda\in (0,1]}\|m_{\lambda}\|_{H^1(\Omega)} \lesssim \|G\|_{H^{-1}(\Omega)},\label{eq:regularized_apriori_bounds_m}
	\\
	{\sup_{\lambda\in (0,1]}\|u_{\lambda}\|_{H^1(\Omega)} \lesssim 1+\|G\|_{H^{-1}(\Omega)}+\|f\|_{C({\overline{\Omega}\times\mathcal{A}})}}, \label{eq:regularized_apriori_bounds_u}
\end{gather}
where the hidden constants depend only on $d$, $\Omega$, $\nu$, $L_H$,  and $L_F$. Furthermore, the density function $m_{\lambda}$ is nonnegative a.e.\ in $\Omega$ owing to the assumption that $G$ is nonnegative in the sense of distributions.

{Recall that we assume here that the source term $G$ has the form  $G=g_0-\nabla \cdot {g_1}$ with $g_0\in L^{q/2}(\Omega)$ and ${g_1}\in L^q(\Omega;\mathbb{R}^d)$ for some $q>d$. 
As a result, we obtain some uniform bounds in $L^\infty$ for the density functions. In particular, we deduce from~\cite[Theorem 8.15]{gilbarg2015elliptic} that the functions $m \in L^\infty(\Omega)$ and $m_{\lambda}\in L^\infty(\Omega)$, for all $\lambda\in (0,1]$, and that
\begin{equation}\label{eq:m_linfty_bound}
\|m\|_{L^{\infty}(\Omega)}+\sup_{\lambda\in (0,1]}\|m_{\lambda}\|_{L^{\infty}(\Omega)}\leq {\Mfty},
\end{equation} 
where {$\Mfty$ is a constant that depends only on $d$, $\Omega$, $L_H$,  $\nu$, $q$, $L_F$, $\norm{G}_{H^{-1}(\Omega)}$, $\norm{g_0}_{L^{q/2}(\Omega)}$ and $\norm{g_1}_{L^q(\Omega;\R^d)}$.} In particular, $\Mfty$ is independent of $\lambda$.

In addition, using the uniform linear growth of the regularized Hamiltonians $H_\lambda$,  c.f.~\eqref{eq:reg_H_linear_growth}, the linear growth of $F$, c.f.~\eqref{eq:F_linear_growth}, and the uniform bounds~\eqref{eq:regularized_apriori_bounds_m}and \eqref{eq:regularized_apriori_bounds_u} above, we also deduce from the regularized HJB equation \eqref{eq:regularized_weakform_1} and the elliptic regularity bound~\eqref{eq:elliptic_regularity} that $u_\lambda \in H^{1+\gamma}(\Omega)$ for all $\lambda \in (0,1]$ and that
\begin{equation}\label{eq:regularized_uniform_gamma}
\sup_{\lambda \in (0,1]}\norm{u_\lambda}_{H^{1+\gamma}(\Omega)} \lesssim 1+\|G\|_{H^{-1}(\Omega)}+\|f\|_{C({\overline{\Omega}\times\mathcal{A}})}, 
\end{equation}
where the hidden constant may depend on the problem data as above, including on $\gamma$, but is independent of $\lambda$.
}

\subsection{Convergence properties of regularized continuous problem}

{It was shown in~\cite{osborne2024regularization} that the solutions of the regularized problems converge to the solution of the MFG PDI as $\lambda \to 0$ in appropriate norms.
In addition, it was shown that for problems involving a strongly monotone coupling term $F$ and a nonnegative  source term $G$, as we are currently assuming, then the difference between solutions of the original problem and its regularizations enjoys an \emph{a priori} bound in terms of the regularization parameter $\lambda$.
In particular, the following result is found} in~\cite[Theorem 4.3]{osborne2024regularization}.
\begin{theorem}[\cite{osborne2024regularization}]\label{thm:pdi_reg_rate_lambda}
Let $(u,m)$ and $(u_{\lambda},m_{\lambda})$, $\lambda\in(0,1]$, be the respective unique solutions of~\eqref{eq:PDI_weakform} and~\eqref{eq:regularized_weakform}.
Then
\begin{equation}\label{eq:pdi_reg_rate_lambda}
\norm{u-u_{\lambda}}_{H^1(\Omega)}+\norm{m-m_{\lambda}}_{\Omega}\lesssim \lambda^{\frac{1}{2}},
\end{equation}
for all $\lambda$ sufficiently small, where the hidden constant depends only on $\Omega$, $\nu$, $\dim$, $L_H$, $L_F$, $c_F$ {and on $\norm{G}_{H^{-1}(\Omega)}$}.
\end{theorem}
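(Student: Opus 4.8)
\emph{Proof strategy.} The plan is to combine a Lasry--Lions type monotonicity argument at the level of the weak formulations with the stability of the HJB equation. Set $e_u\coloneqq u-u_\lambda$ and $e_m\coloneqq m-m_\lambda$, both of which lie in $H^1_0(\Omega)$. First I would test the difference of the HJB equations~\eqref{eq:PDI_weakform_1} and~\eqref{eq:regularized_weakform_1} with $v=e_m$, test the difference of the KFP equations~\eqref{eq:PDI_weakform_2} and~\eqref{eq:regularized_weakform_2} with $w=e_u$, and subtract. The symmetric diffusion terms $\nu(\nabla e_u,\nabla e_m)_\Omega$ coincide and cancel, and since the source term $G$ is identical in both KFP equations, this leaves the identity $(F[m]-F[m_\lambda],e_m)_\Omega=D$, where $D\coloneqq(H[\nabla u]-H_\lambda[\nabla u_\lambda],e_m)_\Omega-(m\tilde{b}_*-m_\lambda\frac{\partial H_\lambda}{\partial p}[\nabla u_\lambda],\nabla e_u)_\Omega$ collects the Hamiltonian and transport contributions, with $\tilde{b}_*\in D_pH[u]$ the transport field appearing in~\eqref{eq:PDI_weakform_2}.

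The crucial step is to show $D\lesssim\lambda$. Since $\tilde{b}_*(x)\in\partial_pH(x,\nabla u(x))$ for a.e.\ $x\in\Omega$, the subgradient inequality~\eqref{eq:H_subdifferential_def} evaluated at $q=\nabla u_\lambda(x)$ gives $-\tilde{b}_*(x)\cdot\nabla e_u(x)\le H(x,\nabla u_\lambda(x))-H(x,\nabla u(x))$ a.e.; likewise $\frac{\partial H_\lambda}{\partial p}(x,\nabla u_\lambda(x))$ is the unique element of $\partial_pH_\lambda(x,\nabla u_\lambda(x))$, so that $\frac{\partial H_\lambda}{\partial p}(x,\nabla u_\lambda(x))\cdot\nabla e_u(x)\le H_\lambda(x,\nabla u(x))-H_\lambda(x,\nabla u_\lambda(x))$ a.e. Multiplying these pointwise inequalities by the nonnegative densities $m$ and $m_\lambda$ respectively, integrating, and inserting the results into $D$, one checks after expanding all the inner products that every term not involving the discrepancy $H-H_\lambda$ cancels, leaving $D\le(m,H[\nabla u_\lambda]-H_\lambda[\nabla u_\lambda])_\Omega+(m_\lambda,H_\lambda[\nabla u]-H[\nabla u])_\Omega$. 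By the regularization bound~\eqref{eq:reg_H_approx} and the nonnegativity of $m$ and $m_\lambda$, the right-hand side is at most $\tfrac{L_H^2\lambda}{2}(\norm{m}_{L^1(\Omega)}+\norm{m_\lambda}_{L^1(\Omega)})$, which is $\lesssim\lambda$ in view of the uniform a priori bounds~\eqref{eq:PDI_apriori_bounds_m} and~\eqref{eq:regularized_apriori_bounds_m} together with the finiteness of $\abs{\Omega}$. Since $m,m_\lambda\in H^1_0(\Omega)$, the strong monotonicity~\eqref{eq:F_strong_mono} then gives $c_F\norm{m-m_\lambda}_\Omega^2\le(F[m]-F[m_\lambda],e_m)_\Omega=D\lesssim\lambda$, hence $\norm{m-m_\lambda}_\Omega\lesssim\lambda^{1/2}$.

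For the value function I would use the stability of the HJB equation. Subtracting~\eqref{eq:PDI_weakform_1} and~\eqref{eq:regularized_weakform_1} and writing $H[\nabla u]-H_\lambda[\nabla u_\lambda]=(H[\nabla u]-H[\nabla u_\lambda])+(H[\nabla u_\lambda]-H_\lambda[\nabla u_\lambda])$, the last summand has $L^2(\Omega)$-norm of order $\lambda$ by~\eqref{eq:reg_H_approx}. Since the regularizations are known to converge, $u_\lambda\to u$ in $H^1_0(\Omega)$ as $\lambda\to 0$, so for $\lambda$ sufficiently small one may apply the semismoothness result Lemma~\ref{lem:H_semismooth} with $v=u$ and $w=u_\lambda$ to produce an $\alpha_\lambda\in\Lambda[u_\lambda]$ for which $H[\nabla u]-H[\nabla u_\lambda]$ agrees with $b(\cdot,\alpha_\lambda)\cdot\nabla e_u$ up to an $H^{-1}(\Omega)$-error of size $\epsilon\norm{e_u}_{H^1(\Omega)}$. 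The associated linear operator $-\nu\Delta+b(\cdot,\alpha_\lambda)\cdot\nabla$ on $H^1_0(\Omega)$ is uniformly invertible with bounded inverse (by the weak maximum principle and the Fredholm alternative, with a bound depending only on $\nu$, $L_H$, and $\Omega$), which is the continuous counterpart of the discrete stability~\eqref{eq:infsup_stab_linop_1} used in Lemma~\ref{lem:PDI_HJB_stability}. Choosing $\epsilon$ small, absorbing, and using~\eqref{eq:F_lipschitz} then yields $\norm{u-u_\lambda}_{H^1(\Omega)}\lesssim\norm{F[m]-F[m_\lambda]}_\Omega+\lambda\lesssim\norm{m-m_\lambda}_\Omega+\lambda\lesssim\lambda^{1/2}$; this is the only place the restriction to small $\lambda$ is needed. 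Adding the two estimates yields~\eqref{eq:pdi_reg_rate_lambda}.

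I expect the main obstacle to be the cancellation underlying the bound on $D$. It is essential that the subgradient inequalities for $H$ and for $H_\lambda$ are applied with exactly the transport fields $\tilde{b}_*$ and $\frac{\partial H_\lambda}{\partial p}[\nabla u_\lambda]$ that appear in the two KFP equations --- this is the structural feature that makes both MFG systems monotone --- and that the nonnegativity of the densities $m$ and $m_\lambda$ is precisely what licenses multiplying the pointwise subgradient inequalities by $m$ and $m_\lambda$ before integrating. Keeping track of the many inner products so that the ``diagonal'' Hamiltonian contributions cancel and only the $H-H_\lambda$ discrepancy, of order $\lambda$, survives is the heart of the argument; everything else reduces to a standard monotonicity estimate or to the HJB stability argument already used in Lemma~\ref{lem:PDI_HJB_stability}.
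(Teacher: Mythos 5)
Your proof is correct: the crossed-testing monotonicity argument with the subgradient inequalities for $\tilde{b}_*$ and $\frac{\partial H_\lambda}{\partial p}[\nabla u_\lambda]$, the nonnegativity of $m$ and $m_\lambda$, the $O(\lambda)$ Moreau--Yosida discrepancy \eqref{eq:reg_H_approx}, the strong monotonicity \eqref{eq:F_strong_mono}, and the semismoothness-plus-linear-stability treatment of the HJB equation are exactly the ingredients needed for \eqref{eq:pdi_reg_rate_lambda}. The paper itself only quotes this theorem from \cite{osborne2024regularization}, but your argument matches the one used there and mirrors the paper's own discrete analogues (Lemma~\ref{lem:pdi_reg_discrete_lambda_rate} for the density bound and the stability arguments of Lemmas~\ref{lem:PDI_HJB_stability} and~\ref{lem:reg_HJB_discrete_stability} for the value function), so this is essentially the same approach.
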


{We emphasize that the bound in Theorem~\ref{thm:pdi_reg_rate_lambda} does not require or use any higher regularity of the solution $(u,m)$ beyond its minimal regularity in $H^1$.}
In the sequel we will show how the bound~\eqref{eq:pdi_reg_rate_lambda} will lead to quantitative control on the joint regularization error and discretization error in the finite element discretization of the regularized problem \eqref{eq:regularized_weakform}.

\subsection{Discretization of regularized problem and its convergence}\label{subsec-fem-reg-pb}

The finite element discretization of the regularized problem~\eqref{eq:regularized_weakform} is the following: for each $\lambda \in (0,1]$, find $(u_{k,\lambda},m_{k,\lambda})\in V_{k}\times V_{k}$ such that
\begin{subequations}\label{eq:regularized_fem_system}
\begin{alignat}{2}
(A_k\nabla u_{k,\lambda},\nabla v_k)_\Omega+(H_{\lambda}[\nabla u_{k,\lambda}],v_k)_\Omega  &= (F[m_{k,\lambda}],v_k)_\Omega &\quad&\forall v_k\in V_k, \label{eq:regularized_fem_1}
\\
(A_k\nabla m_{k,\lambda},\nabla w_k)_\Omega+\left(m_{k,\lambda}\frac{\partial H_{\lambda}}{\partial p}[\nabla u_{k,\lambda}],\nabla w_k\right)_\Omega  &=\langle G,w_k\rangle_{H^{-1}\times H^1_0} &\quad&\forall w_k\in V_k. \label{eq:regularized_fem_2}
\end{alignat}
\end{subequations}
{Note that, under the hypotheses \ref{ass:bounded} and \ref{ass:dmp},  the $C^{1,1}$-regularity of the regularized Hamiltonian $H_{\lambda}$ and the current assumptions on the model data imply the existence and uniqueness of the approximation $(u_{k,\lambda},m_{k,\lambda})\in V_k\times V_k$ for each $\lambda\in (0,1]$, see~\cite[Remark~3.3]{osborne2024near}.}
{In the following, we will consider sequences of approximations where both the regularization parameter and the mesh-size vanish jointly in the limit. To this end, for each $k\in \N$, let $\lambda_k\in (0,1]$ be the regularization parameter to be associated to the mesh $\Tk$, where we require that $\lambda_k\to 0$ as $k\to \infty$. 
We will then consider the discrete approximation $(u_{\klk},m_{\klk})\in V_k\times V_k$ that is the unique solution of~\eqref{eq:regularized_fem_system} for $\lambda=\lambda_k$. 
The next Lemma shows that taking the regularization parameter to vanish jointly with the mesh-size leads to convergence of the sequence of approximations $(u_{\klk},m_{\klk})\in V_k\times V_k$ to the solution $(u,m)$ of the MFG PDI weak formulation~\eqref{eq:PDI_weakform}.}
\begin{lemma}[Existence, Stability and Convergence of Regularized FEM]\label{lem:reg_discrete_convergence}
{Assume the hypotheses~\ref{ass:bounded} and~\ref{ass:dmp}.}
Then, for each $k\in\mathbb{N}$, there exists a unique solution $(u_{k,\lambda_k},m_{k,\lambda_k})$ {of} \eqref{eq:regularized_fem_system}, and we have the uniform bounds
\begin{subequations}\label{eq:reg_discrete_convergence_apriori_bounds}
\begin{align}
\sup_{k\in\mathbb{N}}\|m_{k,\lambda_k}\|_{H^1(\Omega)}&\lesssim \|G\|_{H^{-1}(\Omega)},\label{eq:reg_discrete_m_apriori_bounds}
\\ \sup_{k\in\mathbb{N}}\|u_{k,\lambda_k}\|_{H^1(\Omega)}&\lesssim 1+\|G\|_{H^{-1}(\Omega)}+\|f\|_{C(\overline{\Omega}\times\mathcal{A})}.\label{eq:reg_discrete_u_apriori_bounds}
\end{align}
\end{subequations}
Moreover, as $k\to\infty$,
\begin{align}\label{eq:reg_discrete_convergence}
u_{k,\lambda_k} \to u\quad\text{in}\; H_0^1(\Omega),\quad 
m_{k,\lambda_k}\rightharpoonup m\quad\text{in}\; H_0^1(\Omega),
\quad m_{k,\lambda_k}\to m\quad\text{in}\; L^q(\Omega),
\end{align}
for any $q\in [1,2^*)$, where $2^*\coloneqq \infty$ if $d=2$ and $2^*\coloneqq \frac{2d}{d-2}$ if $d\geq 3$, and for any $q\in[1,\infty]$ if $d=1$.
\end{lemma}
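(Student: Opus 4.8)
The plan is to follow the template of the plain-convergence analysis of~\cite[Theorem~5.4 and Corollary~5.5]{osborne2022analysis}, while adding the ingredients needed to treat the joint limit $h_k\to 0$, $\lambda_k\to 0$, in particular the simultaneous de-regularization of the Hamiltonian. Since the existence and uniqueness of $(u_{k,\lambda_k},m_{k,\lambda_k})$ for each fixed $k$ has already been recorded above, it remains to establish the uniform bounds~\eqref{eq:reg_discrete_convergence_apriori_bounds} and the convergence~\eqref{eq:reg_discrete_convergence}.

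For the uniform bounds we would argue by duality, using the stability estimates~\eqref{eq:infsup_stab_linop_1}--\eqref{eq:infsup_stab_linop_2}. For the density, the field $\widetilde b_k\coloneqq\frac{\partial H_{\lambda_k}}{\partial p}[\nabla u_{k,\lambda_k}]$ satisfies $\norm{\widetilde b_k}_{L^\infty(\Omega;\R^d)}\leq L_H$ by~\eqref{eq:reg_H_deriv_linf_bound}, so the operator $L_k^{m}\colon V_k\to V_k^*$ with principal part $A_k$ and advection $\widetilde b_k$ lies in $W(V_k,\Dk)$, and~\eqref{eq:regularized_fem_2} states exactly that $(L_k^{m})^* m_{k,\lambda_k}=G$ in $V_k^*$; since $V_k\subset H^1_0(\Omega)$ we get $\norm{(L_k^{m})^* m_{k,\lambda_k}}_{V_k^*}\leq\norm{G}_{H^{-1}(\Omega)}$, and then~\eqref{eq:infsup_stab_linop_2} yields~\eqref{eq:reg_discrete_m_apriori_bounds}. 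For the value function, the $C^1$-regularity of $H_{\lambda_k}(x,\cdot)$ and the fundamental theorem of calculus give $H_{\lambda_k}[\nabla u_{k,\lambda_k}]=H_{\lambda_k}(\cdot,0)+\widehat b_k\cdot\nabla u_{k,\lambda_k}$, where $\widehat b_k(x)\coloneqq\int_0^1\frac{\partial H_{\lambda_k}}{\partial p}(x,t\nabla u_{k,\lambda_k}(x))\,\dd t$ again satisfies $\norm{\widehat b_k}_{L^\infty(\Omega;\R^d)}\leq L_H$ by~\eqref{eq:reg_H_deriv_linf_bound}; hence~\eqref{eq:regularized_fem_1} reads $L_k u_{k,\lambda_k}=F[m_{k,\lambda_k}]-H_{\lambda_k}(\cdot,0)$ in $V_k^*$ for the corresponding operator $L_k\in W(V_k,\Dk)$, and~\eqref{eq:infsup_stab_linop_1}, together with~\eqref{eq:F_linear_growth}, the bound on $H_{\lambda_k}(\cdot,0)$ obtained from~\eqref{eq:Hamiltonian_def} and~\eqref{eq:reg_H_approx} (which is where the dependence on $\norm{f}_{C(\overline\Omega\times\mathcal{A})}$ enters), and~\eqref{eq:reg_discrete_m_apriori_bounds}, gives~\eqref{eq:reg_discrete_u_apriori_bounds}.

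For the convergence, the uniform $H^1$-bounds and the compact Sobolev embedding $H^1(\Omega)\hookrightarrow L^q(\Omega)$ for the stated range of $q$ let us extract a subsequence (not relabelled) along which $u_{k,\lambda_k}\rightharpoonup\overline u$ and $m_{k,\lambda_k}\rightharpoonup\overline m$ weakly in $H^1_0(\Omega)$, strongly in $L^q(\Omega)$, and a.e.\ in $\Omega$, for some $\overline u,\overline m\in H^1_0(\Omega)$; the uniformly bounded fields $\widetilde b_k$ admit a further subsequence converging weakly-$\ast$ in $L^\infty(\Omega;\R^d)$ to some $\widetilde b_*$. We would first upgrade $u_{k,\lambda_k}$ to strong convergence in $H^1_0(\Omega)$: choosing $z_k\in V_k$ with $z_k\to\overline u$ in $H^1_0(\Omega)$ and testing~\eqref{eq:regularized_fem_1} with $u_{k,\lambda_k}-z_k$, the positive semi-definiteness of $\Dk$ in $A_k=\nu\mathbb{I}_d+\Dk$, c.f.~\eqref{eq:Ak_def}, produces the coercive contribution $\nu\norm{\nabla(u_{k,\lambda_k}-z_k)}_\Omega^2$, while every remaining term tends to zero in the limit — each being either the pairing of an $L^2(\Omega)$-bounded quantity ($F[m_{k,\lambda_k}]$ or $H_{\lambda_k}[\nabla u_{k,\lambda_k}]$, bounded via~\eqref{eq:F_linear_growth} and~\eqref{eq:reg_H_linear_growth}) with $u_{k,\lambda_k}-z_k\to 0$ in $L^2(\Omega)$, or the pairing of $A_k\nabla z_k\to\nu\nabla\overline u$ in $L^2(\Omega)$ (using $\abs{\Dk}\leq\CD h_k\to 0$ from~\ref{ass:bounded}) with $\nabla(u_{k,\lambda_k}-z_k)\rightharpoonup 0$ weakly in $L^2(\Omega)$. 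Hence $\nabla u_{k,\lambda_k}\to\nabla\overline u$ strongly in $L^2(\Omega)$, and consequently $H_{\lambda_k}[\nabla u_{k,\lambda_k}]\to H[\nabla\overline u]$ strongly in $L^2(\Omega)$ by~\eqref{eq:reg_H_Lipschitz} and~\eqref{eq:reg_H_approx}. Combined with~\eqref{eq:F_lipschitz}, the vanishing of the stabilization term, and the fact that $\bigcup_k V_k$ is a nested dense family in $H^1_0(\Omega)$, passing to the limit in~\eqref{eq:regularized_fem_1} shows that $\overline u$ satisfies the HJB equation~\eqref{eq:PDI_weakform_1} with $m$ replaced by $\overline m$.

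It then remains to identify $\widetilde b_*$ as a measurable selection of $\partial_p H(\cdot,\nabla\overline u)$ and to pass to the limit in the KFP equation; I expect this to be the main obstacle, since it couples the finite element limit with the de-regularization of the discontinuous subdifferential term. For every nonnegative $\phi\in L^\infty(\Omega)$ and every measurable $\xi\in L^2(\Omega;\R^d)$, the inclusion $\widetilde b_k(x)=\frac{\partial H_{\lambda_k}}{\partial p}(x,\nabla u_{k,\lambda_k}(x))\in\partial_p H_{\lambda_k}(x,\nabla u_{k,\lambda_k}(x))$ gives, after multiplying the corresponding subdifferential inequality by $\phi$ and integrating over $\Omega$, the estimate $\int_\Omega\phi\,H_{\lambda_k}(\cdot,\xi)\geq\int_\Omega\phi\,H_{\lambda_k}(\cdot,\nabla u_{k,\lambda_k})+\int_\Omega\phi\,\widetilde b_k\cdot(\xi-\nabla u_{k,\lambda_k})$; letting $k\to\infty$, using the uniform convergence $H_{\lambda_k}\to H$ from~\eqref{eq:reg_H_approx}, the strong $L^2(\Omega)$-convergence of $\nabla u_{k,\lambda_k}$ and of $H_{\lambda_k}[\nabla u_{k,\lambda_k}]$ established above, and the weak-$\ast$ convergence of $\widetilde b_k$, yields $\int_\Omega\phi\,H(\cdot,\xi)\geq\int_\Omega\phi\,H(\cdot,\nabla\overline u)+\int_\Omega\phi\,\widetilde b_*\cdot(\xi-\nabla\overline u)$. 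Since $\phi\geq 0$ and $\xi$ are arbitrary and $q\mapsto H(x,q)$ is continuous, taking $\xi$ in a countable dense subset of $\R^d$ shows that $\widetilde b_*(x)\in\partial_p H(x,\nabla\overline u(x))$ for a.e.\ $x\in\Omega$, i.e.\ $\widetilde b_*\in D_pH[\overline u]$; alternatively one may invoke the graph-closedness properties of the Moreau--Yosida approximation exploited in~\cite{osborne2024regularization}. Finally, since $m_{k,\lambda_k}\to\overline m$ strongly in $L^2(\Omega)$ while $\widetilde b_k\rightharpoonup\widetilde b_*$ weakly-$\ast$ in $L^\infty(\Omega;\R^d)$, one has $m_{k,\lambda_k}\widetilde b_k\rightharpoonup\overline m\,\widetilde b_*$ weakly in $L^2(\Omega)$, so passing to the limit in~\eqref{eq:regularized_fem_2} (again using $\abs{\Dk}\leq\CD h_k\to 0$ and the density of $\bigcup_k V_k$) shows that $(\overline u,\overline m)$, together with $\widetilde b_*$, solves the KFP equation~\eqref{eq:PDI_weakform_2}. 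Hence $(\overline u,\overline m)$ is a weak solution of the MFG PDI~\eqref{eq:PDI_weakform}, and by uniqueness of such solutions $(\overline u,\overline m)=(u,m)$; a standard subsequence argument then upgrades the above to convergence of the whole sequence, which is~\eqref{eq:reg_discrete_convergence}.
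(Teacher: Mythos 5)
Your proof is correct and follows the same overall compactness strategy as the paper's Appendix~A: uniform $H^1$ bounds via the inf-sup stability \eqref{eq:infsup_stab_linop_1}--\eqref{eq:infsup_stab_linop_2} for operators in $W(V_k,\Dk)$, extraction of subsequences, an upgrade to strong $H^1$ convergence of the value-function approximations, weak-$*$ limits of the uniformly bounded drifts $\frac{\partial H_{\lambda_k}}{\partial p}[\nabla u_{k,\lambda_k}]$, identification of the limit drift as an element of $D_pH[\overline u]$, passage to the limit in both discrete equations using nestedness and density of $\bigcup_k V_k$ together with \ref{ass:bounded} to kill the stabilization, and finally uniqueness of the weak solution of \eqref{eq:PDI_weakform} plus the standard subsequence argument. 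Where you differ is in how three sub-steps are carried out, and each of your substitutes is sound: (i) for the uniform bound on $u_{k,\lambda_k}$ the paper cites an auxiliary stability lemma whose proof (as in~\cite[Lemma~6.3]{osborne2022analysis}) is omitted, whereas you prove it directly by writing $H_{\lambda_k}[\nabla u_{k,\lambda_k}]=H_{\lambda_k}(\cdot,0)+\widehat b_k\cdot\nabla u_{k,\lambda_k}$ via the fundamental theorem of calculus, with $\|\widehat b_k\|_{L^\infty(\Omega;\R^d)}\leq L_H$ by \eqref{eq:reg_H_deriv_linf_bound}, and then applying \eqref{eq:infsup_stab_linop_1}; (ii) the paper obtains strong $H^1$ convergence of $u_{k,\lambda_k}$ by testing with $u_{k,\lambda_k}$ itself and proving norm convergence of the gradients, while you test with $u_{k,\lambda_k}-z_k$ for interpolants $z_k\to\overline u$ and use the coercivity coming from $A_k=\nu\mathbb{I}_d+\Dk$ with $\Dk\succeq 0$ --- equivalent in content; (iii) for the inclusion $\widetilde b_*\in D_pH[\overline u]$ the paper invokes the graph-closedness result~\cite[Lemma~5.3]{osborne2024regularization} for the Moreau--Yosida family, whereas you give a self-contained Minty-type argument from the convexity inequality for $H_{\lambda_k}$, the uniform bound \eqref{eq:reg_H_approx}, the established strong convergence of $\nabla u_{k,\lambda_k}$ and of $H_{\lambda_k}[\nabla u_{k,\lambda_k}]$, and the weak-$*$ convergence of $\widetilde b_k$ paired with strongly $L^1$-convergent test fields; this makes your write-up more self-contained at the cost of a somewhat longer identification step, and your limit passages (strong $L^2$ of $m_{k,\lambda_k}$ against weak-$*$ $L^\infty$ of $\widetilde b_k$ in the KFP term) are exactly those used in the paper.
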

{Since this convergence result is due to a combination of arguments from \cite[Theorem 5.4]{osborne2022analysis} and \cite[Theorem 4.1]{osborne2024regularization}, for completeness we include its proof in Appendix \ref{appendix:pf-of-lemma-hk-lamk-vanish-jointly}.}

The following Lemma provides a bound between the exact and numerical solutions of the regularized problems.
\begin{lemma}[Bound between exact and discrete solutions of regularized problems]\label{lem:discrete_reg_l2_bound}
Assume \ref{ass:bounded}, and~\ref{ass:dmp}.  Let~$\gamma$ be as in~\eqref{eq:elliptic_regularity}. For each $k\in\mathbb{N}$, let $(u_{\lambda_k},m_{\lambda_k})$ and $(u_{k,\lambda_k},m_{k,\lambda_k})$ be the respective unique solutions of \eqref{eq:regularized_weakform} and \eqref{eq:regularized_fem_system}.
Then, there exists a $k_*\in \N$ such that
\begin{equation}\label{eq:discrete_reg_l2_bound}
\begin{split}
{\norm{u_{\lamk} - u_{\klk} }_{H^1(\Omega)}} + \|{m}_{\lambda_k}-m_{\klk}\|_{\Omega}
\lesssim \lambda_k^{\frac{1}{2}} + \lambda_k^{-1}h_k^{\gamma} \quad \forall k\geq k_*.
\end{split}
\end{equation}
\end{lemma}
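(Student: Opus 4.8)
The plan is to bound the two components of the error separately but coupled through the stability estimates, exactly mirroring the structure used for the MFG PDI itself. First I would bound the value-function error $\norm{u_{\lamk}-u_{\klk}}_{H^1(\Omega)}$ in terms of the density-function error $\norm{m_{\lamk}-m_{\klk}}_\Omega$, a best-approximation term, and a stabilization consistency term; this is the analogue of Lemma~\ref{lem:PDI_HJB_stability}, but now applied to the regularized HJB equation~\eqref{eq:regularized_weakform_1} and its discretization~\eqref{eq:regularized_fem_1}. Since $H_\lambda$ is only Lipschitz (uniformly in $\lambda$, by~\eqref{eq:reg_H_Lipschitz}) and convex, one can apply Lemma~\ref{lem:H_semismooth} to $H_\lambda$ — but with a subtlety: the semismoothness constant $R$ in Lemma~\ref{lem:H_semismooth} depends on the Hamiltonian and the base point, so to get a uniform statement one needs either a version of semismoothness uniform in $\lambda$, or — more robustly — one uses that $H_\lambda$ is actually $C^{1,1}$ with $\frac{\partial H_\lambda}{\partial p}$ Lipschitz with constant $\lambda^{-1}$ (by~\eqref{eq:reg_H_deriv_Lipschitz}), which gives a clean Taylor-type bound $\norm{H_\lambda[\nabla u_{\klk}]-H_\lambda[\nabla u_{\lamk}]-\frac{\partial H_\lambda}{\partial p}[\nabla u_{\lamk}]\cdot\nabla(u_{\klk}-u_{\lamk})}_\Omega \le \tfrac{1}{2\lambda}\norm{\nabla(u_{\klk}-u_{\lamk})}_\Omega^2$. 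Combined with the stability bound~\eqref{eq:infsup_stab_linop_1} for the linearized operator $L\in W(V_k,\Dk)$ built from $\frac{\partial H_\lambda}{\partial p}[\nabla u_{\lamk}]$ (whose $L^\infty$-norm is $\le L_H$ by~\eqref{eq:reg_H_deriv_linf_bound}), plus the Galerkin orthogonality against $v_k\in V_k$, this yields, for $k$ large enough that the quadratic term is absorbed (using that $\norm{\nabla(u_{\klk}-u_{\lamk})}_\Omega\to 0$, which follows from Lemma~\ref{lem:reg_discrete_convergence} and Theorem~\ref{thm:pdi_reg_rate_lambda}), a bound
\begin{equation*}
\norm{u_{\lamk}-u_{\klk}}_{H^1(\Omega)} \lesssim \inf_{\overline{u}_k\in V_k}\norm{u_{\lamk}-\overline{u}_k}_{H^1(\Omega)} + \norm{h_{\Tk}\nabla u_{\lamk}}_\Omega + \norm{m_{\lamk}-m_{\klk}}_\Omega.
\end{equation*}
By the uniform $H^{1+\gamma}$-regularity~\eqref{eq:regularized_uniform_gamma} and standard interpolation estimates, the first two terms on the right are $\lesssim h_k^\gamma$, uniformly in $\lambda_k$.

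Next I would attack the density error, which is the harder half. Here the KFP equation~\eqref{eq:regularized_weakform_2} and its discretization~\eqref{eq:regularized_fem_2} are linear in $m_\lambda$ but with drift $\frac{\partial H_\lambda}{\partial p}[\nabla u_{\lamk}]$; the discrete drift uses $\nabla u_{\klk}$, so the perturbation in the drift is $\frac{\partial H_\lambda}{\partial p}[\nabla u_{\klk}]-\frac{\partial H_\lambda}{\partial p}[\nabla u_{\lamk}]$, whose $L^2$-norm is $\le \lambda_k^{-1}\norm{\nabla(u_{\klk}-u_{\lamk})}_\Omega$ by~\eqref{eq:reg_H_deriv_Lipschitz}. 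This is precisely where the negative power of $\lambda_k$ enters. The strategy is a duality / adjoint argument: introduce the discrete adjoint KFP operator $L_k^*$ associated with the drift $\frac{\partial H_\lambda}{\partial p}[\nabla u_{\lamk}]$, which lies in $W(V_k,\Dk)$ and hence satisfies the stability bound~\eqref{eq:infsup_stab_linop_2}; write $\norm{m_{\lamk}-m_{\klk}}_\Omega$ (or better, a $V_k$-component of it) via testing the KFP equations against a suitable function, split the error into (a) a consistency/best-approximation part for $m_{\lamk}$, controlled using the uniform $H^1$-bound~\eqref{eq:regularized_apriori_bounds_m} on $m_{\lamk}$ giving $O(h_k)$, plus (b) the drift-perturbation part $(m_{\klk}(\frac{\partial H_\lambda}{\partial p}[\nabla u_{\klk}]-\frac{\partial H_\lambda}{\partial p}[\nabla u_{\lamk}]),\nabla w_k)_\Omega$. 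For term (b) I would use the uniform $L^\infty$-bound on $m_{\klk}$ — which requires a discrete maximum-principle-based argument analogous to~\eqref{eq:m_linfty_bound}, available under~\ref{ass:dmp} — so that this term is bounded by $\lesssim \lambda_k^{-1}\norm{\nabla(u_{\klk}-u_{\lamk})}_\Omega\,\norm{w_k}_{H^1(\Omega)}$, and then feed in the value-function bound from the first step. Care is needed: the $L^2$-error $\norm{m_{\lamk}-m_{\klk}}_\Omega$ for $m_{\lamk}\notin V_k$ must be handled by adding and subtracting a quasi-interpolant $I_k m_{\lamk}\in V_k$, with $\norm{m_{\lamk}-I_k m_{\lamk}}_\Omega\lesssim h_k\norm{m_{\lamk}}_{H^1(\Omega)}$; one might also need $F$ strong monotonicity~\eqref{eq:F_strong_mono} to close the coupling if the two estimates feed back into each other.

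Combining the two estimates: from step one, $\norm{u_{\lamk}-u_{\klk}}_{H^1(\Omega)} \lesssim h_k^\gamma + \norm{m_{\lamk}-m_{\klk}}_\Omega$; from step two, $\norm{m_{\lamk}-m_{\klk}}_\Omega \lesssim h_k + \lambda_k^{-1}\norm{u_{\lamk}-u_{\klk}}_{H^1(\Omega)} + \text{(coupling term via }F)$. The coupling term from $F$, using strong monotonicity and Lipschitz continuity, is $\lesssim \norm{u_{\lamk}-u_{\klk}}_{H^1(\Omega)} + h_k^\gamma$ after rearrangement — but the factor $\lambda_k^{-1}$ in front of the value-function error is dangerous, so one must substitute the step-one bound into step two and absorb: $\norm{m_{\lamk}-m_{\klk}}_\Omega \lesssim h_k^\gamma + \lambda_k^{-1}(h_k^\gamma + \norm{m_{\lamk}-m_{\klk}}_\Omega)$. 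Since $\lambda_k\to 0$ this naively fails to absorb; the resolution is that the dangerous term actually carries an extra smallness factor — either because the drift-perturbation term is genuinely $\lambda_k^{-1}\norm{\nabla(u_{\klk}-u_{\lamk})}_\Omega$ with $\norm{\nabla(u_{\klk}-u_{\lamk})}_\Omega$ small (vanishing), so that for $k\ge k_*$ one has $\lambda_k^{-1}\norm{\nabla(u_{\klk}-u_{\lamk})}_\Omega \le \tfrac12$ (by Lemma~\ref{lem:reg_discrete_convergence}, the $H^1$-errors vanish, though possibly slower than $\lambda_k$ — so one may instead need to be more careful and keep the quadratic structure so that the dangerous term is $\lambda_k^{-1}\norm{\nabla(u_{\klk}-u_{\lamk})}_\Omega^2$, which is $o(\norm{\nabla(u_{\klk}-u_{\lamk})}_\Omega)$ and hence absorbable). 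This absorption step, reconciling the $\lambda_k^{-1}$ blow-up against the vanishing of the regularized discretization error, is the main obstacle, and I expect the proof handles it by carrying the semismooth/Taylor remainder quadratically rather than linearly wherever possible, so that every occurrence of $\lambda_k^{-1}$ is paired with a factor that is $o(1)$ in a way that lets one absorb into the left side for $k\ge k_*$; what survives is then exactly $\lambda_k^{1/2}$ (inherited through Theorem~\ref{thm:pdi_reg_rate_lambda}-type estimates on $\norm{m-m_{\lamk}}_\Omega$ combined with convergence of $m_{\klk}$) plus $\lambda_k^{-1}h_k^\gamma$ from the best-approximation terms amplified by the drift-Lipschitz constant, giving~\eqref{eq:discrete_reg_l2_bound}.
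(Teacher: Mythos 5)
Your proposal correctly locates the two sources of the final bound (the $\lambda_k^{1/2}$ from regularization and the $\lambda_k^{-1}$ from the Lipschitz constant of $\frac{\partial H_{\lambda_k}}{\partial p}$ in \eqref{eq:reg_H_deriv_Lipschitz}), but the argument does not close, and you yourself point at the place where it breaks. Your step two gives $\norm{m_{\lamk}-m_{\klk}}_\Omega\lesssim h_k+\lambda_k^{-1}\norm{u_{\lamk}-u_{\klk}}_{H^1(\Omega)}$, and feeding in step one produces the term $\lambda_k^{-1}\norm{m_{\lamk}-m_{\klk}}_\Omega$, which cannot be absorbed. The proposed rescue --- keeping the Taylor remainder quadratic so that the dangerous term is $\lambda_k^{-1}\norm{\nabla(u_{\klk}-u_{\lamk})}_\Omega^2$ and ``hence absorbable'' --- requires $\norm{\nabla(u_{\klk}-u_{\lamk})}_\Omega=o(\lambda_k)$, which is not available: Lemma~\ref{lem:reg_discrete_convergence} gives only plain convergence with no rate relative to $\lambda_k$, and the very bound being proved is of size $\lambda_k^{1/2}+\lambda_k^{-1}h_k^\gamma$, which is much larger than $\lambda_k$; so the required smallness is circular and in general false. (A further unaddressed point: the pointwise $C^{1,1}$ Taylor estimate bounds the $L^2$-norm of the remainder by $\frac{1}{2\lambda_k}\norm{\nabla(u_{\klk}-u_{\lamk})}_{L^4(\Omega)}^2$, not by the squared $L^2$-norm, and since $u_{\lamk}-u_{\klk}\notin V_k$ no inverse estimate repairs this; likewise the uniform $L^\infty$-bound on $m_{\klk}$ that your drift-perturbation estimate invokes is never established.)

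The paper avoids any feedback of the unknown error through a $\lambda_k^{-1}$ factor by two devices that are absent from your outline. First, the $L^2$ density error is controlled by the discrete Lasry--Lions monotonicity inequality (Lemma~\ref{lem:L2_strong_mono_bound}): for nonnegative $\overline m_k\in\Vkp$, $c_F\norm{\overline m_k-m_{\klk}}_\Omega^2$ is bounded by residual--error duality pairings, so that after Young's inequality (Lemma~\ref{lem:apriori_mixed_norm}) the factor $\lambda_k^{-1}$ multiplies only residual norms evaluated at explicit comparison functions, hence only $h_k^\gamma$, never the unknown error. The comparison density is the auxiliary $\mkp$ of \eqref{eq:mkp_def}, the discrete KFP solution with the exact drift $\frac{\partial H_{\lamk}}{\partial p}[\nabla u_{\lamk}]$: it is nonnegative by the DMP (needed for the monotonicity step, and not guaranteed for your generic quasi-interpolant) and satisfies $\norm{m_{\lamk}-\mkp}_\Omega\lesssim h_k^\gamma$ by a duality argument (Lemma~\ref{lem:mkstar_l2_approx}) needing no regularity of $m_{\lamk}$ beyond $H^1$ --- note this also limits the rate to $h_k^\gamma$, not the $O(h_k)$ you claim for the consistency part. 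Second, the $\lambda$-uniform stability of the discrete HJB equation (Lemma~\ref{lem:reg_HJB_discrete_stability}) is obtained by applying the semismoothness Lemma~\ref{lem:H_semismooth} to the unregularized $H$ about the exact $u$ (using only $u_{\klk}\to u$ from Lemma~\ref{lem:reg_discrete_convergence}), the switch between $H$ and $H_{\lamk}$ and between $u$ and $u_{\lamk}$ costing only $O(\lambda_k)$ by \eqref{eq:reg_H_approx} and $O(\lambda_k^{1/2})$ by Theorem~\ref{thm:pdi_reg_rate_lambda}; no quadratic remainder and no $\lambda_k^{-1}$ appear in the HJB part at all. Without these two ingredients the absorption step on which your argument hinges cannot be justified.
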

Importantly, Lemma~\ref{lem:discrete_reg_l2_bound} quantifies the interplay between the spatial mesh-size and regularization parameter. 
We postpone the proof of Lemma~\ref{lem:discrete_reg_l2_bound} to Section \ref{sec-proof-of-key-lemma} below. 

We conclude this section with the following result which provides a quantitative rate of convergence between the density approximations generated by the original finite element scheme \eqref{eq:pdi_fem_system} and the scheme obtained with the regularized Hamiltonian \eqref{eq:regularized_fem_system}. 
\begin{lemma}[Bound between discrete solutions of PDI and regularized problems]\label{lem:pdi_reg_discrete_lambda_rate}
Assume \ref{ass:bounded} and \ref{ass:dmp}. {For each $k\in\mathbb{N}$, let $(u_k,m_k)$ and $(u_{k,\lambda_k},m_{k,\lambda_k})$ be the respective unique solutions of~\eqref{eq:pdi_fem_system} and~\eqref{eq:regularized_fem_system}.}
Then, for all $k\in\mathbb{N}$,
\begin{equation}\label{eq:pdi_reg_discrete_lambda_rate}
\|m_k - m_{k,\lambda_k}\|_{\Omega}\lesssim \lambda_k^{\frac{1}{2}}.
\end{equation}
where the hidden constant depends only on $\Omega$, $c_F$, $\Cstab$, $L_H$, {and $\|G\|_{H^{-1}(\Omega)}$}.
\end{lemma}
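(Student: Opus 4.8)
The plan is to carry out a fully discrete version of the Lasry--Lions monotonicity argument that underlies the continuous regularization bound of Theorem~\ref{thm:pdi_reg_rate_lambda} (see~\cite{osborne2024regularization}), working entirely within the conforming space $V_k$. A key feature of this approach is that it uses only the convexity and Lipschitz continuity of $H$ and $H_{\lambda_k}$, the discrete maximum principle, and the strong monotonicity of $F$, so that no additional regularity of the solutions is needed and the resulting bound holds for every $k\in\N$ (not merely for $k$ large).

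First I would set $e_u\coloneqq u_k-u_{\klk}\in V_k$ and $e_m\coloneqq m_k-m_{\klk}\in V_k$, which are admissible discrete test functions. Subtracting the discrete HJB equations~\eqref{eq:pdi_fem_1} and~\eqref{eq:regularized_fem_1} tested with $v_k=e_m$, and subtracting the discrete KFP equations~\eqref{eq:pdi_fem_2} and~\eqref{eq:regularized_fem_2} tested with $w_k=e_u$, produces two identities whose $(A_k\nabla e_u,\nabla e_m)_\Omega$ terms coincide because $A_k$ is symmetric; subtracting the two identities eliminates these terms and leaves
\[
\bigl(H[\nabla u_k]-H_{\lambda_k}[\nabla u_{\klk}],e_m\bigr)_\Omega-\bigl(m_k\widetilde{b}_k-m_{\klk}\beta_k,\nabla e_u\bigr)_\Omega=\bigl(F[m_k]-F[m_{\klk}],e_m\bigr)_\Omega,
\]
where $\widetilde{b}_k\in D_pH[u_k]$ is the drift from~\eqref{eq:pdi_fem_system} and $\beta_k\coloneqq\frac{\partial H_{\lambda_k}}{\partial p}[\nabla u_{\klk}]$. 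Regrouping the left-hand side according to the weights $m_k$ and $m_{\klk}$, I would rewrite it as
\[
-\bigl(m_k,\,H_{\lambda_k}[\nabla u_{\klk}]-H[\nabla u_k]-\widetilde{b}_k\cdot\nabla(u_{\klk}-u_k)\bigr)_\Omega-\bigl(m_{\klk},\,H[\nabla u_k]-H_{\lambda_k}[\nabla u_{\klk}]-\beta_k\cdot\nabla(u_k-u_{\klk})\bigr)_\Omega.
\]

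The heart of the proof, and the step I expect to be the main obstacle, is to control the sign of these two weighted terms, which is where the discrete maximum principle enters. Since $\|\widetilde{b}_k\|_{L^\infty(\Omega;\R^d)}\le L_H$ and, by~\eqref{eq:reg_H_deriv_linf_bound}, $\|\beta_k\|_{L^\infty(\Omega;\R^d)}\le L_H$, the KFP operators governing $m_k$ and $m_{\klk}$ are adjoints of operators in the class $W(V_k,D_k)$; testing against the nodal basis functions $\xi_i\ge0$ and using that $G$ is nonnegative in the sense of distributions, assumption~\ref{ass:dmp} yields $m_k\ge0$ and $m_{\klk}\ge0$ a.e.\ in $\Omega$. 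For the first weighted term, the subgradient inequality for $\widetilde{b}_k(x)\in\partial_pH(x,\nabla u_k(x))$ gives $H[\nabla u_{\klk}]-H[\nabla u_k]-\widetilde{b}_k\cdot\nabla(u_{\klk}-u_k)\ge0$ a.e., while~\eqref{eq:reg_H_approx} gives $H_{\lambda_k}[\nabla u_{\klk}]\ge H[\nabla u_{\klk}]-\tfrac12 L_H^2\lambda_k$; hence the integrand is bounded below by $-\tfrac12 L_H^2\lambda_k$ and, since $m_k\ge0$, the first term is at most $\tfrac12 L_H^2\lambda_k\|m_k\|_{L^1(\Omega)}$. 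For the second weighted term, the convexity of $H_{\lambda_k}(x,\cdot)$ with gradient $\beta_k$ gives $H_{\lambda_k}[\nabla u_{\klk}]+\beta_k\cdot\nabla(u_k-u_{\klk})\le H_{\lambda_k}[\nabla u_k]$, and $H_{\lambda_k}\le H$ pointwise directly from the definition~\eqref{eq:regularized_H_def}; hence the integrand is nonnegative and, as $m_{\klk}\ge0$, the second term is nonpositive. Consequently the left-hand side of the identity above is at most $\tfrac12 L_H^2\lambda_k\|m_k\|_{L^1(\Omega)}$, whereas its right-hand side is at least $c_F\|e_m\|_\Omega^2$ by the strong monotonicity~\eqref{eq:F_strong_mono} of $F$, which applies since $m_k,m_{\klk}\in V_k\subset H^1_0(\Omega)$.

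Finally I would close the argument by bounding $\|m_k\|_{L^1(\Omega)}$ uniformly in $k$: applying the adjoint discrete stability estimate~\eqref{eq:infsup_stab_linop_2} to the KFP operator governing $m_k$ yields $\|m_k\|_{H^1(\Omega)}\le\Cstab\|G\|_{H^{-1}(\Omega)}$, hence $\|m_k\|_{L^1(\Omega)}\lesssim\Cstab\|G\|_{H^{-1}(\Omega)}$ with a constant depending only on $\Omega$. Combining this with the previous step gives $c_F\|e_m\|_\Omega^2\lesssim\lambda_k\Cstab\|G\|_{H^{-1}(\Omega)}$, and taking square roots gives $\|m_k-m_{\klk}\|_\Omega\lesssim\lambda_k^{1/2}$ with the hidden constant depending only on $\Omega$, $c_F$, $\Cstab$, $L_H$ and $\|G\|_{H^{-1}(\Omega)}$, as claimed. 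The only genuinely delicate point is the sign analysis of the two weighted Hamiltonian terms: it requires both the discrete nonnegativity of the densities --- exactly what assumption~\ref{ass:dmp} provides --- and the ``crossed'' convexity estimate comparing $H_{\lambda_k}$ evaluated at $\nabla u_{\klk}$ with $H$ evaluated at $\nabla u_k$; the remaining manipulations are routine.
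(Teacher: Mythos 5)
Your proposal is correct and follows essentially the same route as the paper's proof: the same cross-testing of the discrete HJB and KFP equations with $m_k-m_{\klk}$ and $u_k-u_{\klk}$, cancellation of the $A_k$-terms by symmetry, nonnegativity of the discrete densities via \ref{ass:dmp}, the convexity/subgradient inequalities for $H_{\lambda_k}$ and $H$, strong monotonicity of $F$, and the stability bound \eqref{eq:infsup_stab_linop_2} for the densities. The only cosmetic difference is that you exploit $H_{\lambda_k}\leq H$ to discard one weighted term as nonpositive and bound the other via $\|m_k\|_{L^1(\Omega)}$, whereas the paper bounds both terms using $|H-H_{\lambda_k}|\leq \tfrac{1}{2}L_H^2\lambda_k$ and the $L^2$-norms of both densities; both variants yield the same conclusion with the same constants.
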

\begin{proof}
{Fix $k\in\mathbb{N}$. Test both~\eqref{eq:pdi_fem_1} and~\eqref{eq:regularized_fem_1} with $v_k = m_k - m_{k,\lambda_k}$ {and} subtract the resulting equations to obtain
\begin{equation}\label{u-eqn-diff}
	\begin{split}
		&\left(F[m_k] - F[m_{k,\lambda_k}],m_k - m_{k,\lambda_k}\right)_{\Omega}
		\\&\qquad=\left(A_k\nabla (u_k- u_{k,\lambda_k}), \nabla (m_k - m_{k,\lambda_k})\right)_{\Omega}+\left(m_k,H[\nabla u_k] - H_{\lambda_k}[\nabla u_{k,\lambda_k}]\right)_{\Omega}\\
		&\qquad\qquad\qquad\qquad\qquad\qquad\qquad\qquad+\left(m_{k,\lambda_k},H_{\lambda_k}[\nabla u_{k,\lambda_k}] - H[\nabla u_k]\right)_{\Omega}. 
	\end{split}
\end{equation} 
Then, test both~\eqref{eq:pdi_fem_2} and~\eqref{eq:regularized_fem_2} with $w_k = u_k - u_{k,\lambda_k}$, and subtract the resulting equations to get 
\begin{equation}\label{m-eqn-diff}
	\begin{split}
		&\left(A_k\nabla (m_k - m_{k,\lambda_k}),\nabla (u_k - u_{k,\lambda_k})\right)_{\Omega} 
		+\left(m_k\tilde{b}_k,\nabla (u_k - u_{k,\lambda_k})\right)_{\Omega} 
		\\
		&\qquad\qquad\qquad\qquad\qquad\qquad\qquad+ \left(m_{k,\lambda_k}\frac{\partial H_{\lambda_k}}{\partial p}[\nabla u_{k,\lambda_k}],\nabla (u_{k,\lambda_k} - u_k)\right)_{\Omega}= 0. 
	\end{split}
\end{equation} 
Using the symmetry of the matrix $A_k$ as implied by hypothesis \ref{ass:bounded}, we subtract~\eqref{m-eqn-diff} from~\eqref{u-eqn-diff} to thus obtain
\begin{equation}
	\begin{split}
		&\left(F[m_k] - F[m_{k,\lambda_k}],m_k - m_{k,\lambda_k}\right)_{\Omega}
		\\
		&\quad\quad\quad\quad\quad=\left(m_{k,\lambda_k},H_{\lambda_k}[\nabla u_{k,\lambda_k}] - H[\nabla u_k]  + \frac{\partial H_{\lambda_k}}{\partial p}[\nabla u_{k,\lambda_k}]\cdot\nabla (u_k - u_{k,\lambda_k})\right)_{\Omega} 
		\\
		&\quad\quad\quad\quad\quad\quad\quad\quad+\left(m_k,H[\nabla u_k] - H_{\lambda_k}[\nabla u_{k,\lambda_k}] + \tilde{b}_k\cdot\nabla (u_{k,\lambda_k} - u_k)\right)_{\Omega}.
	\end{split}
\end{equation} 
Since $G$ is nonnegative in the sense of distributions and hypothesis \ref{ass:dmp} ensures the Discrete Maximum Principle, we deduce that $m_k$ and $m_{k,\lambda_k}$ are both nonnegative everywhere in $\Omega$. This fact, together with the convexity of $H_{\lambda_k}$ w.r.t.\ $p$ and the definition of the inclusion $\tilde{b}_k\in D_pH[u_k]$, leads us to deduce that
\begin{multline}
			\left(m_{k,\lambda_k},H_{\lambda_k}[\nabla u_{k,\lambda_k}] - H[\nabla u_k]  + \frac{\partial H_{\lambda_k}}{\partial p}[\nabla u_{k,\lambda_k}]\cdot\nabla (u_k - u_{k,\lambda_k})\right)_{\Omega} 
			\\
			\leq  \left(m_{k,\lambda},H_{\lambda_k}[\nabla u_k]- H[\nabla u_k] \right)_{\Omega}
\end{multline}
and 
\begin{multline}
	\left(m_k,H[\nabla u_k] - H_{\lambda_k}[\nabla u_{k,\lambda_k}] + \tilde{b}_k\cdot\nabla (u_{k,\lambda_k} - u_k)\right)_{\Omega}\\
	\leq \left(m_k,H[\nabla u_{k,\lambda_k}] - H_{\lambda_k}[\nabla u_{k,\lambda_k}]\right)_{\Omega}.
\end{multline}
Consequently,
\begin{equation}\label{eq:m-lam-approx_1}
	\begin{split}
		\left(F[m_k] - F[m_{k,\lambda_k}],m_k - m_{k,\lambda_k}\right)_{\Omega}
		\leq 
		&\left(m_{k,\lambda},H_{\lambda_k}[\nabla u_k]- H[\nabla u_k] \right)_{\Omega}
		\\&\quad\quad\quad\quad+ \left(m_k,H[\nabla u_{k,\lambda_k}] - H_{\lambda_k}[\nabla u_{k,\lambda_k}]\right)_{\Omega}.
	\end{split}
\end{equation}
{We then combine~\eqref{eq:m-lam-approx_1} with the strong monotonicity of $F$ in \eqref{eq:F_strong_mono}, the uniform bound $|H_\lambda-H|\leq {\frac{L_H^2\lambda_k}{2}}$ of~\eqref{eq:reg_H_approx}, and the Cauchy--Schwarz inequality to obtain
	\begin{multline}\label{eq:m-lam-approx_2}
		\norm{m_k-m_{k,\lambda_k}}_{\Omega}^2 		\leq c_F^{-1}\left(F[m_k] - F[m_{k,\lambda_k}],m_k - m_{k,\lambda_k}\right)_{\Omega} \\
		\lesssim
		\left(\|m_{k,\lambda_k}\|_{L^2(\Omega)}+\|m_k\|_{L^2(\Omega)}\right){{\lambda_k}},
	\end{multline}
	where the hidden constant depends only on $c_F$, $L_H$, and $\Omega$.}
Using the uniform operator bound \eqref{eq:infsup_stab_linop_2}, we deduce from the KFP equations satisfied by $m_k$ and $m_{k,\lambda_k}$, respectively, that $ \|m_k\|_{L^2(\Omega)}\leq\Cstab\|G\|_{H^{-1}(\Omega)}$ and $ \|m_{k,\lambda_k}\|_{L^2(\Omega)}\leq\Cstab\|G\|_{H^{-1}(\Omega)}$. Therefore, 
\begin{equation}\label{mk-mklamk-bound}
	\|m_k - m_{k,\lambda_k}\|_{\Omega}^{2}\lesssim {{\lambda_k}},
\end{equation}
{where the hidden constant depends only on $\Omega$, $L_H$, $\Cstab$, $\|G\|_{H^{-1}(\Omega)}$, and $c_F$.}
By taking square root in \eqref{mk-mklamk-bound} and noting that $k\in\mathbb{N}$ was arbitrary, we obtain \eqref{eq:pdi_reg_discrete_lambda_rate}.}
\end{proof}

\section{Main result on rate of convergence of FEM for MFG PDI}\label{sec:main_results}

The main result of this work provides a rate of convergence for the finite element discretization \eqref{eq:pdi_fem_system} of the MFG PDI weak formulation \eqref{eq:PDI_weakform}. 
\begin{theorem}\label{thm:main_convergence_rate}
{Suppose that~\ref{ass:bounded} and~\ref{ass:dmp} hold.
Let $\gamma$ be as in~\eqref{eq:elliptic_regularity}. Let $(u,m)$ denote the unique solution of \eqref{eq:PDI_weakform} and, for each $k\in\mathbb{N}$, let $(u_k,m_k)$ denote the unique solution of \eqref{eq:pdi_fem_system}.
Then, there exists a $k_*\in \N$ such that
\begin{equation}\label{eq:main_convergence_rate}
\norm{u - u_{k}}_{H^1(\Omega)} + \|m - m_{k}\|_{\Omega}  \lesssim h_k^{\gamma/3} \quad \forall k\geq k_*.
\end{equation}
}
\end{theorem}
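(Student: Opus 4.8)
The plan is to assemble the estimate \eqref{eq:main_convergence_rate} from the three two-sided bounds recorded in the diagram \eqref{eq:bounds_diagram} — Theorem~\ref{thm:pdi_reg_rate_lambda}, Lemma~\ref{lem:discrete_reg_l2_bound}, and Lemma~\ref{lem:pdi_reg_discrete_lambda_rate} — together with the HJB stability bound of Lemma~\ref{lem:PDI_HJB_stability}, and then to optimize over the regularization parameter. First I would control the density error. By the triangle inequality,
\[
\norm{m-m_k}_\Omega \le \norm{m-m_{\lambda_k}}_\Omega + \norm{m_{\lambda_k}-m_{k,\lambda_k}}_\Omega + \norm{m_{k,\lambda_k}-m_k}_\Omega.
\]
Since $\lambda_k\in(0,1]$ with $\lambda_k\to 0$ and $h_k\to 0$, for all $k$ sufficiently large each constituent applies: the first term is $\lesssim \lambda_k^{1/2}$ by Theorem~\ref{thm:pdi_reg_rate_lambda}, the second is $\lesssim \lambda_k^{1/2}+\lambda_k^{-1}h_k^{\gamma}$ by Lemma~\ref{lem:discrete_reg_l2_bound}, and the third is $\lesssim \lambda_k^{1/2}$ by Lemma~\ref{lem:pdi_reg_discrete_lambda_rate}. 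Hence $\norm{m-m_k}_\Omega \lesssim \lambda_k^{1/2}+\lambda_k^{-1}h_k^{\gamma}$ for all $k$ large.

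Next I would balance the two terms on the right-hand side. They are equal precisely when $\lambda_k^{3/2}=h_k^{\gamma}$, i.e.\ $\lambda_k=h_k^{2\gamma/3}$; with this choice both equal $h_k^{\gamma/3}$, giving $\norm{m-m_k}_\Omega\lesssim h_k^{\gamma/3}$. One must then check admissibility: since $h_k\to 0$ and $2\gamma/3>0$, we have $\lambda_k\to 0$ and $\lambda_k\in(0,1]$ for $k$ sufficiently large, so this $\lambda_k$ is a valid choice of mesh-dependent regularization parameter in the sense of Section~\ref{subsec-fem-reg-pb}.

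Finally, for the value function I would invoke Lemma~\ref{lem:PDI_HJB_stability}, which bounds $\norm{u-u_k}_{H^1(\Omega)}$ by the best-approximation error $\inf_{\overline u_k\in V_k}\norm{u-\overline u_k}_{H^1(\Omega)}$, the consistency term $\norm{h_{\mathcal{T}_k}\nabla u}_\Omega$, and the density error $\norm{m-m_k}_\Omega$, all for $k$ large. Using the fractional regularity $u\in H^{1+\gamma}(\Omega)$ from \eqref{eq:PDI_u_fractional_regularity}, the standard finite element interpolation estimate for shape-regular simplicial meshes gives $\inf_{\overline u_k\in V_k}\norm{u-\overline u_k}_{H^1(\Omega)}\lesssim h_k^{\gamma}\norm{u}_{H^{1+\gamma}(\Omega)}\lesssim h_k^{\gamma}$, while trivially $\norm{h_{\mathcal{T}_k}\nabla u}_\Omega\le h_k\norm{\nabla u}_\Omega\lesssim h_k$. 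Since $\gamma\in(0,1]$ gives $\gamma/3\le\gamma\le 1$ and $h_k\le 1$ for $k$ large, both $h_k^{\gamma}$ and $h_k$ are $\lesssim h_k^{\gamma/3}$; combining with the density bound yields $\norm{u-u_k}_{H^1(\Omega)}\lesssim h_k^{\gamma/3}$. Adding the two estimates gives \eqref{eq:main_convergence_rate} for all $k$ beyond a suitable $k_*$.

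The genuine difficulty is not in this short optimization, which is essentially the triangle inequality plus the choice $\lambda_k=h_k^{2\gamma/3}$, but rather in Lemma~\ref{lem:discrete_reg_l2_bound} — the lower edge of \eqref{eq:bounds_diagram} — where one must bound the finite element error for the regularized KFP equation whose advective coefficient $\partial_p H_{\lambda_k}$ has Lipschitz constant blowing up like $\lambda_k^{-1}$; this is what forces the factor $\lambda_k^{-1}h_k^{\gamma}$ and where the regularity order $\gamma$ enters through the elliptic regularity bound \eqref{eq:elliptic_regularity}. Granting that lemma (proved in Section~\ref{sec-proof-of-key-lemma}), the theorem follows as above.
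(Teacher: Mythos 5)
Your proposal is correct and follows essentially the same route as the paper: Lemma~\ref{lem:PDI_HJB_stability} for the value function, the triangle inequality through $(m_{\lambda_k}, m_{k,\lambda_k})$ using Theorem~\ref{thm:pdi_reg_rate_lambda} and Lemmas~\ref{lem:discrete_reg_l2_bound} and~\ref{lem:pdi_reg_discrete_lambda_rate} for the density, and the choice $\lambda_k \eqsim h_k^{2\gamma/3}$ to balance $\lambda_k^{1/2}$ against $\lambda_k^{-1}h_k^{\gamma}$. Your added remarks on the admissibility of this $\lambda_k$ and on the bound $\|h_{\mathcal{T}_k}\nabla u\|_{\Omega}\lesssim h_k\lesssim h_k^{\gamma/3}$ match the paper's reasoning, so nothing is missing.
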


Note that in computational experiments so far, see e.g.\ \cite{osborne2022analysis,osborne2023finite,osborne2024thesis}, the rates of convergence for the error has been observed to be of optimal order $h_k$ in many cases. Thus, it is not currently known if the rate of order $h_k^{\gamma/3}$ given by~\eqref{eq:main_convergence_rate} is pessimistic or if it is sharp in a worst-case scenario.

{
\begin{proof}
Lemma~\ref{lem:PDI_HJB_stability} implies that, for all $k$ sufficiently large,
\begin{equation}
\norm{u-u_k}_{H^1(\Omega)} + \|m-m_k\|_{\Omega}
\lesssim  \inf_{\overline{u}_k\in V_k}\norm{u-\overline{u}_k}_{H^1(\Omega)} + \norm{h_{\mathcal{T}_k}\nabla u}_{\Omega} + \|m-m_k\|_{\Omega}.
\end{equation}
{
Recall that $u\in H^{1+\gamma}(\Omega)$ and that~\eqref{eq:PDI_u_fractional_regularity} gives a bound on $\norm{u}_{H^{1+\gamma}(\Omega)}$ in terms of the model data.
Thus, to bound the best approximation error $\inf_{\overline{u}_k\in V_k}\norm{u-\overline{u}_k}_{H^1(\Omega)}$, we may use the Scott--Zhang quasi-interpolant~\cite{scott1990finite} and the theory of interpolation spaces~\cite{AdamsFournier03} to find that $\inf_{\overline{u}_k\in V_k}\norm{u-\overline{u}_k}_{H^1(\Omega)}\lesssim h_k^{\gamma}\norm{u}_{H^{1+\gamma}(\Omega)}$, where the hidden constant depends only on $\Omega$, $\dim$, and the shape-regularity of the meshes, but is otherwise independent of $k$.}
Since $\gamma \in (0,1]$ entails that $h_k\lesssim h_k^{\gamma}$ and since $\norm{u}_{H^1(\Omega)}\lesssim \norm{u}_{H^{1+\gamma}(\Omega)}$, it follows that
\begin{equation}
\inf_{\overline{u}_k\in V_k}\norm{u-\overline{u}_k}_{H^1(\Omega)} + \norm{h_{\mathcal{T}_k}\nabla u}_{\Omega} \lesssim h_k^{\gamma},
\end{equation}
where the hidden constant may depend on the problem data, including $\norm{G}_{H^{-1}(\Omega)}$ and $\norm{f}_{C(\overline{\Omega}\times\mathcal{A})}$, but is otherwise independent of $k$.
Hence, for all $k$ sufficiently large,
\begin{equation}\label{eq:main_convergence_rate_1}
\norm{u-u_k}_{H^1(\Omega)} + \|m-m_k\|_{\Omega}
\lesssim  h_k^{\gamma} + \|m-m_k\|_{\Omega},
\end{equation}
We now show how to bound $\|m-m_k\|_{\Omega}$.
We apply the triangle inequality
\begin{equation}\label{eq:main_convergence_rate_2}
\|m-m_k\|_{\Omega}
\leq  \|m-m_{\lambda_k}\|_{\Omega} +\|m_{\lambda_k}-m_{\klk}\|_{\Omega}+\|m_{k,\lambda_k}-m_k\|_{\Omega},
\end{equation}
and we bound each term in turn. 
Theorem~\ref{thm:pdi_reg_rate_lambda} and Lemmas~\ref{lem:pdi_reg_discrete_lambda_rate} and~\ref{lem:discrete_reg_l2_bound} show that
\begin{subequations}\label{easy-m-bound}
\begin{gather}
\|m-m_{\lambda_k}\|_{\Omega}+\|m_{k,\lambda_k}-m_k\|_{\Omega}\lesssim \lambda_k^{\frac{1}{2}},
\\ 
\|{m}_{\lambda_k}-m_{\klk}\|_{\Omega}
\lesssim \lambda_k^{\frac{1}{2}} + \lambda_k^{-1}h_k^{\gamma},
\end{gather}
\end{subequations}
for all $k$ sufficiently large, where the hidden constants in the above bounds are independent of $k\in\mathbb{N}$. 
Combining the bounds above with~\eqref{eq:main_convergence_rate_1} shows that, for all $k$ sufficiently large,
\begin{equation}
\norm{u-u_k}_{H^1(\Omega)} + \|m-m_k\|_{\Omega}
\lesssim \lambda_k^{\frac{1}{2}} + \lambda_k^{-1}h_k^{\gamma} + h_k^{\gamma}.
\end{equation}
Since $\lambda_k^{-1}\geq 1$ for all $k\in\mathbb{N}$, we get $h_k^{\gamma}\leq  \lambda_k^{-1} h_k^{\gamma} $, and hence 
\begin{equation}\label{eq:main_convergence_rate_3}
\norm{u-u_k}_{H^1(\Omega)} + \|m-m_k\|_{\Omega} \lesssim \lambda_k^{\frac{1}{2}}  + \lambda_k^{-1} h_k^{\gamma}
\end{equation} for all $k\in\mathbb{N}$ sufficiently large. 
{Since the left-hand side is independent of the choice of $\lambda_k$, it is easy to see that the right-hand side is minimized for the choice $\lambda_k \eqsim h_k^{2\gamma/3}$, which yields the bound
\begin{equation}
\begin{split}
\norm{u-u_k}_{H^1(\Omega)} + \|m-m_k\|_{\Omega}
\lesssim h_k^{\gamma/3}
\end{split}
\end{equation}
for all $k\in\mathbb{N}$ sufficiently large. This completes the proof of the theorem.}
\end{proof}
}

{
In computational practice, it is often easier to solve the discrete system of equations~\eqref{eq:regularized_fem_system}, with regularization, than the unregularized inclusion problem~\eqref{eq:pdi_fem_system}.
Therefore, it is also of practical interest to consider the error between $(u,m)$ and $(u_{\klk},m_{\klk})$.

{
\begin{corollary}\label{cor:exact_to_discrete_reg}
Suppose that~\ref{ass:bounded} and~\ref{ass:dmp} hold.
Let $\gamma$ be as in~\eqref{eq:elliptic_regularity}.
Then, for any sequence of $\lambda_k\in (0,1]$ such that $\lambda_k\to 0$ as $k\to \infty$, there exists a $k_*\in \N$, such that
\begin{equation}
\norm{u-u_{\klk}}_{H^1(\Omega)} + \norm{m - m_{\klk}}_\Omega \lesssim \lambda_{k}^{\frac{1}{2}} + \lambda_k^{-1} h_k^\gamma \quad \forall k\geq k_*.  \label{eq:exact_to_discrete_reg_1}
\end{equation}
\end{corollary}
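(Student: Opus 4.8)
The plan is to derive \eqref{eq:exact_to_discrete_reg_1} by interpolating between the exact solution $(u,m)$ of the MFG PDI and its regularized discrete approximation $(u_{\klk},m_{\klk})$ through the intermediate object $(u_{\lamk},m_{\lamk})$, the exact solution of the regularized continuous problem \eqref{eq:regularized_weakform}. Concretely, for the density variable I would write
\[
\norm{m - m_{\klk}}_\Omega \leq \norm{m - m_{\lamk}}_\Omega + \norm{m_{\lamk} - m_{\klk}}_\Omega,
\]
and analogously $\norm{u - u_{\klk}}_{H^1(\Omega)} \leq \norm{u - u_{\lamk}}_{H^1(\Omega)} + \norm{u_{\lamk} - u_{\klk}}_{H^1(\Omega)}$.

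Next I would bound the first terms on the right-hand sides using Theorem~\ref{thm:pdi_reg_rate_lambda}, which gives $\norm{u - u_{\lamk}}_{H^1(\Omega)} + \norm{m - m_{\lamk}}_\Omega \lesssim \lambda_k^{1/2}$ for all $\lambda_k$ sufficiently small; since $\lambda_k \to 0$ as $k \to \infty$, this holds for all $k$ beyond some threshold. The second terms are handled directly by Lemma~\ref{lem:discrete_reg_l2_bound}, which already delivers $\norm{u_{\lamk} - u_{\klk}}_{H^1(\Omega)} + \norm{m_{\lamk} - m_{\klk}}_\Omega \lesssim \lambda_k^{1/2} + \lambda_k^{-1} h_k^{\gamma}$ for all $k \geq k_*$ with $k_*$ supplied by that lemma. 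Taking $k_*$ to be the larger of the two thresholds and summing the two estimates, the two $\lambda_k^{1/2}$ contributions coalesce into a single one, yielding precisely \eqref{eq:exact_to_discrete_reg_1}.

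There is essentially no substantive obstacle here: the corollary is a direct consequence of the triangle inequality combined with the two already-established quantitative bounds, namely Theorem~\ref{thm:pdi_reg_rate_lambda} (purely at the continuous level) and Lemma~\ref{lem:discrete_reg_l2_bound} (the genuinely hard estimate, whose proof is deferred to Section~\ref{sec-proof-of-key-lemma}). The only minor point requiring care is the bookkeeping of the index threshold, since both invoked results carry a ``for $k$ sufficiently large'' qualifier and one must take $k_*$ to dominate both. One may additionally remark, as a sanity check and to connect with Theorem~\ref{thm:main_convergence_rate}, that optimizing the right-hand side of \eqref{eq:exact_to_discrete_reg_1} over $\lambda_k$ with the choice $\lambda_k \eqsim h_k^{2\gamma/3}$ recovers the rate $h_k^{\gamma/3}$, though this is not needed for the corollary as stated.
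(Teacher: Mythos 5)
Your proposal is correct and follows exactly the paper's argument: the triangle inequality through the intermediate solution $(u_{\lambda_k},m_{\lambda_k})$, with the first piece controlled by Theorem~\ref{thm:pdi_reg_rate_lambda} and the second by Lemma~\ref{lem:discrete_reg_l2_bound}, taking $k_*$ large enough to satisfy both thresholds. Nothing further is needed.
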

\begin{proof}
The bound~\eqref{eq:exact_to_discrete_reg_1} follows immediately from Theorem~\ref{thm:pdi_reg_rate_lambda}, Lemma~\ref{lem:discrete_reg_l2_bound}, and the triangle inequality.
\end{proof}
}
}

{Corollary~\ref{cor:exact_to_discrete_reg} thus} shows that one can combine discretization and regularization in order to obtain a discrete problem that is easier to solve in practice, whilst still retainining some quantitative control on the error. Note that choosing $\lambda_k $ to be of order $h_k^{2\gamma/3}$ results in an error of order $h_k^{\gamma/3}$, similar to the bound in Theorem~\ref{thm:main_convergence_rate}.

\section{Proof of Lemma~\ref{lem:discrete_reg_l2_bound}}\label{sec-proof-of-key-lemma}

In this section, we will give the proof of Lemma~\ref{lem:discrete_reg_l2_bound} after suitable preparation, and thereby complete the proof of Theorem~\ref{thm:main_convergence_rate}.

\subsection{Stability of discretized HJB equation}\label{sec:stability_discretized_HJB}

{In this section, we analyse the consistency and stability properties of the discretization of the HJB equations with regularized Hamiltonians.
To this end, we introduce the following discrete residual operators.}
For each $k\in\mathbb{N}$, let the operator $\RkH\colon V_k\times V_k\to V_k^*$ for the regularized HJB equation be defined by
\begin{equation}\label{eq:HJB_residual_def}
\langle \RkH(\overline{u}_k,\overline{m}_k),v_k\rangle_{V_k^*\times V_k} \coloneqq (F[\overline{m}_k],v_k)_{\Omega} - \left(A_k\nabla \overline{u}_k,\nabla v_k\right)_\Omega- \left( H_{\lambda_k}[\nabla \overline{u}_k],v_k\right)_\Omega,
\end{equation}
for all $\overline{m}_k,\overline{u}_k,v_k,w_k\in V_k$.
{Observe that the unique solution $(u_{k,\lambda},m_{k,\lambda})\in V_k\times V_k$ {of the} discrete problem \eqref{eq:regularized_fem_system} is a solution of $\RkH(\overline{u}_k,\overline{m}_k)=0$.}

{The following Lemma concerns the consistency of the discretized HJB equation, i.e.\ if $(\overline{u}_k,\overline{m}_k)$ is close to the exact solution $(u_{\lambda_k},m_{\lambda_k})$ of the regularized problem~\eqref{eq:regularized_weakform} then the residual norm $\norm{\RkH(\overline{u}_k,\overline{m}_k)}_{V_k^*}$ should be comparatively small, up to a consistency term coming from the stabilization that is first-order with respect to the mesh-size.}
\begin{lemma}\label{lem:discrete_reg_HJB_consistency}
Assume~\ref{ass:bounded}.
For all $k\in\N$, and all $(\overline{u}_k,\overline{m}_k)\in V_k\times V_k$, we have
\begin{align} 
\|\RkH(\overline{u}_k,\overline{m}_k)\|_{V_k^*}&\lesssim \|u_{\lambda_k}-\overline{u}_k\|_{H^1(\Omega)} + \|m_{\lambda_k}-\overline{m}_k\|_{\Omega} + \norm{{h_{\Tk}}\nabla u_{\lambda_k}}_{\Omega},\label{eq:discrete_reg_HJB_consistency} 
\end{align}
where the hidden constants depend only on $d$, $\Omega$, $\nu$, $L_F$, $L_H$, $\CD$, and $\Mfty$. 
\end{lemma}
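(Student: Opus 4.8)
The plan is to exploit the fact that $(u_{\lambda_k},m_{\lambda_k})$ solves the continuous regularized HJB equation~\eqref{eq:regularized_weakform_1} exactly, and $V_k \subset H^1_0(\Omega)$, so conforming discrete test functions $v_k \in V_k$ are admissible in~\eqref{eq:regularized_weakform_1}. The first step is to insert the continuous equation into the definition~\eqref{eq:HJB_residual_def}: for an arbitrary $v_k \in V_k$ with $\norm{v_k}_{H^1(\Omega)} \le 1$, I would write
\begin{equation*}
\langle \RkH(\overline{u}_k,\overline{m}_k), v_k\rangle_{V_k^*\times V_k} = \langle \RkH(\overline{u}_k,\overline{m}_k),v_k\rangle_{V_k^*\times V_k} - \big[\nu(\nabla u_{\lambda_k},\nabla v_k)_\Omega + (H_{\lambda_k}[\nabla u_{\lambda_k}],v_k)_\Omega - (F[m_{\lambda_k}],v_k)_\Omega\big],
\end{equation*}
since the bracketed term vanishes by~\eqref{eq:regularized_weakform_1}. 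Then, using $A_k = \nu \mathbb{I}_d + D_k$ from~\eqref{eq:Ak_def}, the $\nu(\nabla\cdot,\nabla\cdot)$ pieces combine to give $\nu(\nabla(u_{\lambda_k}-\overline{u}_k),\nabla v_k)_\Omega$, while the stabilization contributes the extra term $-(D_k \nabla \overline{u}_k,\nabla v_k)_\Omega$, which I would rewrite as $-(D_k\nabla u_{\lambda_k},\nabla v_k)_\Omega - (D_k\nabla(\overline{u}_k-u_{\lambda_k}),\nabla v_k)_\Omega$.

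The second step is to bound each of the resulting four groups of terms against $\norm{v_k}_{H^1(\Omega)} \le 1$. The diffusion difference $\nu(\nabla(u_{\lambda_k}-\overline{u}_k),\nabla v_k)_\Omega$ is controlled by $\nu \norm{u_{\lambda_k}-\overline{u}_k}_{H^1(\Omega)}$ via Cauchy--Schwarz. For the Hamiltonian difference $(H_{\lambda_k}[\nabla u_{\lambda_k}] - H_{\lambda_k}[\nabla \overline{u}_k],v_k)_\Omega$, I would use the uniform Lipschitz bound~\eqref{eq:reg_H_Lipschitz} (whose constant $L_H$ is independent of $\lambda_k$) together with Cauchy--Schwarz and the Poincaré inequality to get a bound of order $L_H \norm{u_{\lambda_k}-\overline{u}_k}_{H^1(\Omega)}$. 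The coupling difference $(F[m_{\lambda_k}] - F[\overline{m}_k],v_k)_\Omega$ is handled by the Lipschitz continuity~\eqref{eq:F_lipschitz} of $F$, giving $L_F \norm{m_{\lambda_k}-\overline{m}_k}_\Omega$. Finally, for the two stabilization terms, assumption~\ref{ass:bounded} gives $\abs{D_k} \le \CD h_{\Tk}$ pointwise, so $\abs{(D_k\nabla u_{\lambda_k},\nabla v_k)_\Omega} \lesssim \CD \norm{h_{\Tk}\nabla u_{\lambda_k}}_\Omega$ and $\abs{(D_k\nabla(\overline{u}_k-u_{\lambda_k}),\nabla v_k)_\Omega} \lesssim \CD \norm{h_{\Tk}}_{L^\infty(\Omega)}\norm{\nabla(\overline{u}_k-u_{\lambda_k})}_\Omega \lesssim \CD h_k \norm{u_{\lambda_k}-\overline{u}_k}_{H^1(\Omega)}$; since $h_k$ is bounded, the latter is absorbed into the first term on the right-hand side of~\eqref{eq:discrete_reg_HJB_consistency}. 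Taking the supremum over $v_k$ with $\norm{v_k}_{H^1(\Omega)}\le 1$ then yields~\eqref{eq:discrete_reg_HJB_consistency}.

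I do not expect any serious obstacle here — this is a standard consistency estimate that proceeds by testing against conforming discrete functions and term-by-term estimation. The only point requiring mild care is ensuring that all constants (particularly the Lipschitz constant of $H_{\lambda_k}$ in~\eqref{eq:reg_H_Lipschitz} and the stabilization constant $\CD$) are genuinely independent of both $k$ and $\lambda_k$; this is guaranteed by the fact that $L_H$ in~\eqref{eq:reg_H_Lipschitz} is the same as in~\eqref{eq:Hamiltonian_Lipschitz} and by~\ref{ass:bounded}. Note that this lemma is purely about the HJB residual and does not yet invoke the $H^{1+\gamma}$-regularity of $u_{\lambda_k}$ from~\eqref{eq:regularized_uniform_gamma} nor any stability of the discrete operator; those ingredients, together with a best-approximation estimate for $\norm{h_{\Tk}\nabla u_{\lambda_k}}_\Omega$ and an inf-sup bound to convert residual control into error control, will enter in the subsequent steps toward Lemma~\ref{lem:discrete_reg_l2_bound}.
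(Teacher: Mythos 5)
Your proposal is correct and follows essentially the same route as the paper: insert the exact regularized HJB equation~\eqref{eq:regularized_weakform_1} tested with conforming $v_k\in V_k$, use $A_k=\nu\mathbb{I}_d+D_k$ to split off the stabilization, and estimate the four resulting terms via the Lipschitz continuity of $F$ and of $H_{\lambda_k}$ (with $\lambda$-independent constant $L_H$) and the bound $\abs{D_k}\leq \CD h_{\Tk}$ from~\ref{ass:bounded}. The only cosmetic difference is that the paper keeps the term $(A_k\nabla(\overline{u}_k-u_{\lambda_k}),\nabla v_k)_\Omega$ together rather than splitting it into its $\nu$ and $D_k$ parts as you do, which is an equivalent bookkeeping choice.
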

\begin{proof}
{We start by showing~\eqref{eq:discrete_reg_HJB_consistency}. 
Recall that the dual norm $\norm{\cdot}_{V_k^*}$ is defined in~\eqref{eq:Vkstar_norm} above.}
Since $(u_{\lambda_k},m_{\lambda_k})$ solves~\eqref{eq:regularized_weakform} and {since $V_k\subset H^1_0(\Omega)$}, we have
\begin{multline}\label{eq:discrete_reg_HJB_consistency_1}
\langle \RkH(\overline{u}_k,\overline{m}_k),v_k\rangle_{V_k^*\times V_k}
=(F[\overline{m}_k]-F[m_{\lamk}],v_k)_\Omega - (A_k\nabla (\overline{u}_k-u_{\lamk}), \nabla v_k)_\Omega
\\ - (H_{\lamk}[\nabla \overline{u}_k]- H_{\lamk}[\nabla u_{\lambda_k}],v_k)_\Omega- (D_k\nabla u_{\lamk},\nabla v_k)_\Omega,
\end{multline}
for all $v_k\in V_k$.
{Note that in obtaining~\eqref{eq:discrete_reg_HJB_consistency_1} above, we have used the definition of~$A_k$ in~\eqref{eq:Ak_def} which implies that $(\nu \nabla u_{\lamk},\nabla v_k)_\Omega = (A_k \nabla u_{\lamk},\nabla v_k)_\Omega-(\Dk \nabla u_{\lamk},\nabla v_k)_\Omega$.}
The Lipschitz continuity of $F$ in~\eqref{eq:F_lipschitz} implies that~$|(F[\overline{m}_k]-F[m_{\lambda_k}],v_k)_\Omega|\leq L_F \norm{\overline{m}_k-m_{\lamk}}_\Omega \norm{v_k}_{H^1(\Omega)}$ for all $v_k\in V_k$.
The definition of $A_k$ in~\eqref{eq:Ak_def} and the bound on $D_k$ from~\ref{ass:bounded} then imply that $\abs{ (A_k\nabla (\overline{u}_k-u_{\lamk}),\nabla v_k)_\Omega}\lesssim \norm{u_{\lamk}-\overline{u}_k}_{H^1(\Omega)}\norm{v_k}_{H^1(\Omega)}$ for all $v_k\in V_k$, for some hidden constant that depends only on $d$, $\nu$, $\CD$ and on $\Omega$.
The Lipschitz continuity of $H_{\lamk}$ in~\eqref{eq:Hamiltonian_Lipschitz} implies that $\abs{(H_{\lamk}[\nabla u_{\lamk}]-H_{\lamk}[\nabla \overline{u}_k],v_k)_\Omega}\lesssim \norm{u_{\lamk}-\overline{u}_k}_{H^1(\Omega)}\norm{v_k}_{H^1(\Omega)} $ for all $v_k\in V_k$, {where in particular we stress that the hidden constant does not depend on $\lambda_k$.}
The hypothesis~\ref{ass:bounded} implies that $\abs{(\Dk\nabla u_{\lamk},\nabla v_k)_\Omega}\leq \CD \norm{ {h_{\Tk}} \nabla u_{\lamk}}_{\Omega}\norm{v_k}_{H^1(\Omega)}$ for all $v_k\in V_k$. 
Combining these bounds then yields~\eqref{eq:discrete_reg_HJB_consistency}.
\end{proof}

{In Lemma~\ref{lem:reg_HJB_discrete_stability} below, we consider the stability of the discretized HJB equations with regularlized Hamiltonians. The idea is to show that, on sufficiently fine meshes, the difference between a general discrete function $\overline{u}_k\in V_k$ and the value function approximation $u_{\klk}$ appearing in~\eqref{eq:regularized_fem_system} is bounded in terms of the  the residual $\RkH(\overline{u}_k,\overline{m}_k)$, where $\overline{m}_k \in V_k$, plus some additional terms related to the difference in density approximations $\overline{m}_k-m_{\klk}$. However, in order to obtain a bound with constants that are independent of~$\lambda_k$, we will use the semismoothness of the unregularized Hamiltonian $H$ about the exact value function $u$ in the analysis, c.f.~Lemma~\ref{lem:H_semismooth} above, which will lead to some additional terms on the right-hand side of the stability bound.}

\begin{lemma}[Stability of discrete HJB equations with regularized Hamiltonians]\label{lem:reg_HJB_discrete_stability}
Assume the hypotheses \ref{ass:bounded} and \ref{ass:dmp}. There exists a $k_*\in\mathbb{N}$ such that
\begin{equation}\label{eq:reg_HJB_discrete_stability}
\|\overline{u}_{k}-{u}_{k,\lambda_k}\|_{H^1(\Omega)}\lesssim\|\RkH(\overline{u}_k,\overline{m}_k)\|_{V_k^*}+\lambda_k^{\frac{1}{2}}+\| u_{\lambda_k}-\overline{u}_k\|_{H^1(\Omega)} + \|\overline{m}_k-{m}_{k,\lambda_k}\|_{\Omega}
\end{equation}
for all $k\geq k_*$ and all $(\overline{u}_k,\overline{m}_k)\in V_k\times V_k$. 
\end{lemma}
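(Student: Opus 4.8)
The plan is to exploit the uniform inf--sup stability~\eqref{eq:infsup_stab_linop_1} for linear operators in the class $W(V_k,\Dk)$ against a carefully chosen linearization of the regularized HJB equation. For each $k\in\N$, pick $\alpha_k\in\Lambda[\overline{u}_k]$ (a measurable selection of $\Lambda(\cdot,\nabla\overline{u}_k)$) and set $\widetilde{b}_k(x)\coloneqq b(x,\alpha_k(x))$, so that $\|\widetilde{b}_k\|_{L^\infty(\Omega;\R^d)}\le L_H$ and the operator $L_k\colon V_k\to V_k^*$ defined by $\langle L_k w_k,v_k\rangle_{V_k^*\times V_k}\coloneqq (A_k\nabla w_k,\nabla v_k)_\Omega+(\widetilde{b}_k\cdot\nabla w_k,v_k)_\Omega$ lies in $W(V_k,\Dk)$. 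Then~\eqref{eq:infsup_stab_linop_1} gives $\|\overline{u}_k-u_{k,\lambda_k}\|_{H^1(\Omega)}\le\Cstab\|L_k(\overline{u}_k-u_{k,\lambda_k})\|_{V_k^*}$, and the whole proof reduces to bounding $\langle L_k(\overline{u}_k-u_{k,\lambda_k}),v_k\rangle_{V_k^*\times V_k}$ for arbitrary $v_k\in V_k$.

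Next I would expand this pairing. Using the definitions of $\RkH$ in~\eqref{eq:HJB_residual_def} and of the discrete regularized HJB equation~\eqref{eq:regularized_fem_1} (which $u_{k,\lambda_k}$ solves with $\overline{m}_k$ replaced by $m_{k,\lambda_k}$, i.e.\ $\RkH(u_{k,\lambda_k},m_{k,\lambda_k})=0$), together with the identity $A_k=\nu\mathbb{I}_d+\Dk$, the pairing $\langle L_k(\overline{u}_k-u_{k,\lambda_k}),v_k\rangle_{V_k^*\times V_k}$ decomposes into: (i) the residual term $-\langle\RkH(\overline{u}_k,\overline{m}_k),v_k\rangle_{V_k^*\times V_k}$, controlled by $\|\RkH(\overline{u}_k,\overline{m}_k)\|_{V_k^*}\|v_k\|_{H^1(\Omega)}$; (ii) a coupling term $(F[\overline{m}_k]-F[m_{k,\lambda_k}],v_k)_\Omega$, controlled by $L_F\|\overline{m}_k-m_{k,\lambda_k}\|_\Omega\|v_k\|_{H^1(\Omega)}$ via~\eqref{eq:F_lipschitz}; and (iii) the genuinely nonlinear term $(H_{\lambda_k}[\nabla\overline{u}_k]-H_{\lambda_k}[\nabla u_{k,\lambda_k}]-\widetilde{b}_k\cdot\nabla(\overline{u}_k-u_{k,\lambda_k}),v_k)_\Omega$. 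The crucial point is term (iii): I would \emph{not} try to estimate it directly using the Lipschitz constant of $\partial_p H_{\lambda_k}$, since that constant behaves like $\lambda_k^{-1}$ and would ruin the bound. Instead, the strategy is to relate $H_{\lambda_k}$ to the unregularized $H$ using $|H_{\lambda_k}-H|\le\tfrac{L_H^2\lambda_k}{2}$ from~\eqref{eq:reg_H_approx}, which costs an additive $O(\lambda_k)$ on the right-hand side (hence the $\lambda_k^{1/2}$ in the statement, which in fact dominates $\lambda_k$ for $\lambda_k\le 1$), and then apply the semismoothness Lemma~\ref{lem:H_semismooth} about the exact value function $u$. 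Concretely, one replaces $H_{\lambda_k}[\nabla w]$ by $H[\nabla w]$ (for $w=\overline{u}_k$ and $w=u_{k,\lambda_k}$) up to the $O(\lambda_k)$ error, inserts $\pm(H[\nabla u]+\widetilde{b}_k\cdot\nabla(\overline{u}_k-u))$ and $\pm(H[\nabla u]+\widetilde{b}_k\cdot\nabla(u_{k,\lambda_k}-u))$, and then invokes Lemma~\ref{lem:H_semismooth} with $v=u$, $\eps=\tfrac{1}{4\Cstab}$: since $\alpha_k\in\Lambda[\overline{u}_k]$ one gets $\|H[\nabla\overline{u}_k]-H[\nabla u]-\widetilde{b}_k\cdot\nabla(\overline{u}_k-u)\|_{H^{-1}(\Omega)}\le\tfrac{1}{4\Cstab}\|\overline{u}_k-u\|_{H^1(\Omega)}$ once $\|\overline{u}_k-u\|_{H^1(\Omega)}\le R$, and similarly the term with $u_{k,\lambda_k}$ can be handled --- here one uses that both $\overline{u}_k\to u$ (implicitly, since the bound is only asserted for $\overline{u}_k$ close to $u_{\lambda_k}$, together with $u_{\lambda_k}\to u$) and $u_{k,\lambda_k}\to u$ in $H^1_0(\Omega)$ by Lemma~\ref{lem:reg_discrete_convergence}, which is exactly why the lemma is only claimed for $k\ge k_*$.

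The main obstacle, and the delicate bookkeeping, is precisely this treatment of term (iii): one must ensure the semismoothness estimate is applied with the \emph{same} selection $\widetilde{b}_k=b(\cdot,\alpha_k)$, $\alpha_k\in\Lambda[\overline{u}_k]$, in both places, which is legitimate because Lemma~\ref{lem:H_semismooth} takes the supremum over $\alpha\in\Lambda[w]$ with $w=\overline{u}_k$ on the first occurrence, while on the second occurrence ($u_{k,\lambda_k}$ in place of $w$) one should instead rewrite $H[\nabla u_{k,\lambda_k}]-H[\nabla u]-\widetilde{b}_k\cdot\nabla(u_{k,\lambda_k}-u)$ and bound it by splitting through $\overline{u}_k$, absorbing $\|\overline{u}_k-u_{k,\lambda_k}\|_{H^1(\Omega)}$ terms back into the left-hand side. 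After collecting all contributions, $\langle L_k(\overline{u}_k-u_{k,\lambda_k}),v_k\rangle_{V_k^*\times V_k}$ is bounded by $\big(\|\RkH(\overline{u}_k,\overline{m}_k)\|_{V_k^*}+C\lambda_k+L_F\|\overline{m}_k-m_{k,\lambda_k}\|_\Omega+C\|u_{\lambda_k}-\overline{u}_k\|_{H^1(\Omega)}+\tfrac{1}{2\Cstab}\|\overline{u}_k-u_{k,\lambda_k}\|_{H^1(\Omega)}\big)\|v_k\|_{H^1(\Omega)}$, where $\|u-\overline{u}_k\|_{H^1(\Omega)}$ and $\|u-u_{\lambda_k}\|_{H^1(\Omega)}$ are folded into $\|u_{\lambda_k}-\overline{u}_k\|_{H^1(\Omega)}$ plus the $O(\lambda_k^{1/2})$ bound of Theorem~\ref{thm:pdi_reg_rate_lambda}. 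Dividing by $\|v_k\|_{H^1(\Omega)}$, applying~\eqref{eq:infsup_stab_linop_1}, and absorbing the $\tfrac12\|\overline{u}_k-u_{k,\lambda_k}\|_{H^1(\Omega)}$ term into the left-hand side then yields~\eqref{eq:reg_HJB_discrete_stability}, with $\lambda_k\le\lambda_k^{1/2}$ used to simplify the exponent.
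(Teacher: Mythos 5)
Your overall architecture matches the paper's: inf--sup stability \eqref{eq:infsup_stab_linop_1} for a linearized operator in $W(V_k,\Dk)$, the decomposition into the residual term, the $F$-coupling term and the nonlinear Hamiltonian remainder, the $O(\lambda_k)$ swap between $H_{\lambda_k}$ and $H$ via \eqref{eq:reg_H_approx}, and semismoothness (Lemma~\ref{lem:H_semismooth}) about the exact value function $u$. However, there is a genuine gap in your choice of the measurable selection: you take $\alpha_k\in\Lambda[\overline{u}_k]$, whereas the paper takes $\alpha_k\in\Lambda[u_{k,\lambda_k}]$, and this difference matters in two places. First, applying Lemma~\ref{lem:H_semismooth} with $v=u$ and $w=\overline{u}_k$ requires $\|\overline{u}_k-u\|_{H^1(\Omega)}\leq R$, where $R$ depends on $u$ and $\eps$; but $\overline{u}_k\in V_k$ is \emph{arbitrary}, and the lemma claims the bound for \emph{all} $(\overline{u}_k,\overline{m}_k)\in V_k\times V_k$ with a $k_*$ independent of them. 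Your justification --- that ``the bound is only asserted for $\overline{u}_k$ close to $u_{\lambda_k}$'' --- misreads the statement; no such restriction is present. (One could try to rescue the far-field regime $\|\overline{u}_k-u\|_{H^1(\Omega)}>R$ by a separate trivial argument using the uniform $H^1$ bounds on $u_{\lambda_k}$ and $u_{k,\lambda_k}$, but you do not give such an argument.) With the paper's choice, semismoothness is applied at $w=u_{k,\lambda_k}$, a fixed sequence converging to $u$ in $H^1_0(\Omega)$ by Lemma~\ref{lem:reg_discrete_convergence}, so the radius condition holds for all $k\geq k_*$ uniformly in $(\overline{u}_k,\overline{m}_k)$.

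Second, and more seriously, your treatment of the leftover remainder at $u_{k,\lambda_k}$ does not close. After splitting through $\overline{u}_k$ you are left with $H[\nabla u_{k,\lambda_k}]-H[\nabla \overline{u}_k]-\widetilde{b}_k\cdot\nabla(u_{k,\lambda_k}-\overline{u}_k)$ where $\widetilde{b}_k$ is a subgradient selection at $\nabla\overline{u}_k$, i.e.\ the \emph{wrong} point for semismoothness; the only available estimate is the crude Lipschitz bound, which gives a contribution of size $2L_H\|\overline{u}_k-u_{k,\lambda_k}\|_{H^1(\Omega)}\|v_k\|_{\Omega}$. After multiplying by $\Cstab$ this produces a term $2L_H\Cstab\|\overline{u}_k-u_{k,\lambda_k}\|_{H^1(\Omega)}$ on the right-hand side, whose coefficient is $O(1)$ rather than strictly less than one, so it cannot be ``absorbed back into the left-hand side'' as you claim; absorption is only legitimate for the small coefficient $\tfrac{1}{2}$ supplied by the semismoothness estimate with the matching selection. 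The paper avoids this entirely: with $\alpha_k\in\Lambda[u_{k,\lambda_k}]$ the single semismoothness term is $H[\nabla u_{k,\lambda_k}]-H[\nabla u]-b_k\cdot\nabla(u_{k,\lambda_k}-u)$, bounded by $\tfrac{1}{2\Cstab}\|u-u_{k,\lambda_k}\|_{H^1(\Omega)}$, while all the $\overline{u}_k$-dependent differences are estimated by plain Lipschitz bounds in terms of $\|u-\overline{u}_k\|_{H^1(\Omega)}$ (which sit harmlessly on the right-hand side and are then converted to $\|u_{\lambda_k}-\overline{u}_k\|_{H^1(\Omega)}+\lambda_k^{1/2}$ via Theorem~\ref{thm:pdi_reg_rate_lambda}); only the $\tfrac12\|u-u_{k,\lambda_k}\|_{H^1(\Omega)}$ term requires a triangle inequality and absorption. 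To repair your argument, switch the selection to $\Lambda[u_{k,\lambda_k}]$ and rerun your bookkeeping accordingly.
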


\begin{proof}
{For each $k\in \N$,} let $\alpha_k\in \Lambda[u_{k,\lambda_k}]$ be arbitrarily chosen, and let the operator $L_k\in W(V_k,D_k)$ be given by
\begin{equation}
\langle L_k w_k,v_k\rangle_{V_k^*\times V_k}
\coloneqq (A_k\nabla w_k,\nabla v_k)_\Omega+(b_k\cdot\nabla w_k,v_k)_\Omega\quad\forall w_k,v_k\in V_k,
\end{equation} 
{where $b_k (x) \coloneqq b(x,\alpha_k(x)) $ for $x\in \Omega$.}
{For each $k\in \N$, let $\overline{m}_k,\overline{u}_k\in V_k$ be fixed but arbitrary.}
{Then, it follows from the stability bound~\eqref{eq:infsup_stab_linop_1} that
\begin{equation}\label{eq:reg_HJB_discrete_stability_1}
\norm{\overline{u}_{k}-{u}_{k,\lambda_k}}_{H^1(\Omega)}\leq \Cstab\norm{L_k(\overline{u}_{k}-{u}_{k,\lambda_k})}_{V_k^*}
\end{equation}
Then, using~\eqref{eq:regularized_fem_1} and the definition of $\RkH$ in~\eqref{eq:HJB_residual_def},} we find that
\begin{multline}\label{eq:HJB_discrete_stability_2}
\langle L_k( \overline{u}_{k}-{u}_{k,\lambda_k}),v_k\rangle_{V_k^*\times V_k}
\\= (F[\overline{m}_k] - F[m_{k,\lambda_k}],v_k)_\Omega
-\langle \RkH(\overline{u}_k,\overline{m}_k),v_k\rangle_{V_k^*\times V_k} 
 \\+
\left( H_{\lambda_k}[\nabla u_{k,\lambda_k}] - H_{\lambda_k}[\nabla \overline{u}_k],v_k\right)_\Omega+ (b_k\cdot\nabla (\overline{u}_k-u_{\klk}),v_k)_\Omega,
\end{multline}
for all $v_k\in V_k$.
To bound the remaining terms in~\eqref{eq:reg_HJB_discrete_stability}, we make the additional addition and subtractions to find that, for all $v_k\in V_k$,
\begin{equation}\label{eq:HJB_discrete_stability_3}
\begin{split}
\left( H_{\lambda_k}[\nabla u_{k,\lambda_k}] - \right.&\left. H_{\lambda_k}[\nabla \overline{u}_k],v_k\right)_\Omega  + (b_k\cdot\nabla (\overline{u}_k-u_{\klk}),v_k)_\Omega
\\ &= (H[\nabla u_{k,\lambda_k}]-H[\nabla u] - b_k\cdot\nabla( u_{k,\lambda_k}-u),v_k)_\Omega 
\\ 
&\quad+  ( H_{\lambda_k}[\nabla u_{k,\lambda_k}] - H[\nabla u_{k,\lambda_k}],v_k)_\Omega  
+ (H[\nabla u] - H_{\lambda_k}[\nabla u],v_k)_{\Omega}
\\ &\quad+ (H_{\lambda_k}[\nabla u] -H_{\lambda_k}[\nabla \overline{u}_k],v_k)_\Omega
+ (b_k\cdot \nabla (\overline{u}_k-u),v_k)_\Omega,
\end{split}
\end{equation}
where {$u$ is the value function component of the solution of~\eqref{eq:PDI_weakform_1}.}
Observe that the approximation bound between $H$ and $H_\lambda$ in~\eqref{eq:reg_H_approx} implies that
\begin{equation}
\abs{( H_{\lambda_k}[\nabla u_{k,\lambda_k}] - H[\nabla u_{k,\lambda_k}],v_k)_\Omega} + 	\abs{(H[\nabla u] - H_{\lambda_k}[\nabla u],v_k)_{\Omega}} \lesssim \lambda_k \norm{v_k}_\Omega,
\end{equation}
for all $v_k\in V_k$, with some hidden constant depending only $L_H$ and on $\Omega$.
The Lipschitz continuity of $H_{\lambda_k}$, c.f.~\eqref{eq:reg_H_Lipschitz}, and the bound $\norm{b_k}_{L^\infty(\Omega;\R^d)}\leq \norm{b}_{C(\overline{\Omega}\times \mathcal{A};\R^d)}$ implies that
\begin{equation}
\abs{(H_{\lambda_k}[\nabla u] -H_{\lambda_k}[\nabla \overline{u}_k],v_k)_\Omega}+\abs{ (b_k\cdot \nabla (\overline{u}_k-u),v_k)_\Omega } \lesssim \norm{u-\overline{u}_k}_{H^1(\Omega)}\norm{v_k}_{H^1(\Omega)},
\end{equation}
for all $v_k\in V_k$, with some hidden constant depending only on $L_H$.
Since $\alpha_k\in \Lambda[u_{k,\lambda_k}]$ and $b_k(\cdot)=b(\cdot,\alpha_k(\cdot))$ {and since $u_{\klk}\to u$ as $k\to \infty$ in $H^1(\Omega)$ by Lemma~\ref{lem:reg_discrete_convergence},} we apply Lemma~\ref{lem:H_semismooth} to deduce that there exists a $k_*\in \N$ such that, for all $k\geq k_*$,
\begin{equation}\label{eq:HJB_discrete_stability_4}
\|H[\nabla u_{k,\lambda_k}] - H[\nabla u] - b_k\cdot\nabla (u_{k,\lambda_k}-u)\|_{H^{-1}(\Omega)}\leq \frac{1}{2\Cstab}\|u - u_{k,\lambda_k}\|_{H^1(\Omega)}.
\end{equation}
It then follows from~\eqref{eq:reg_HJB_discrete_stability}, Lipschitz continuity of $F$, c.f.~\eqref{eq:F_lipschitz}, {and the bounds above} that
\begin{multline}\label{eq:HJB_discrete_stability_5}
\norm{\overline{u}_{k}-{u}_{k,\lambda_k}}_{H^1(\Omega)}  \leq \Cstab \norm{L_k(\overline{u}_{k}-{u}_{k,\lambda_k})}_{V_k^*}
\\  \leq C \left(\norm{\RkH(\overline{u}_k,\overline{m}_k)}_{V_k^*} + \lambda_k + \norm{u-\overline{u}_k}_{H^1(\Omega)} + \norm{\overline{m}_k-m_{\klk}}_{\Omega} \right)
\\  + \frac{1}{2}\norm{u-u_{\klk}}_{H^1(\Omega)},
\end{multline}
for all $k$ sufficiently large, {where the generic constant $C$ depends only on $\Cstab$, $L_H$, $L_F$ and on $\Omega$.}
We now consider $\norm{u-u_{\klk}}_{H^1(\Omega)}$ and we use the triangle inequality
\begin{multline}\label{eq:HJB_discrete_stability_6}
\norm{u-u_{\klk}}_{H^1(\Omega)} \leq \norm{u-\overline{u}_k}_{H^1(\Omega)}+\norm{\overline{u}_k-u_{\klk}}_{H^1(\Omega)}
\\
\leq (C+1)\left(\norm{\RkH(\overline{u}_k,\overline{m}_k)}_{V_k^*} + \lambda_k + \norm{u-\overline{u}_k}_{H^1(\Omega)}+\norm{\overline{m}_k-m_{\klk}}_{\Omega} \right)
\\ +\frac{1}{2}\norm{u-u_{\klk}}_{H^1(\Omega)},
\end{multline}
for all $k$ sufficiently large, which, after simplification, implies that
\begin{equation}\label{eq:HJB_discrete_stability_7}
\|u - u_{k,\lambda_k}\|_{H^1(\Omega)}\lesssim\|\RkH(\overline{u}_k,\overline{m}_k)\|_{V_k^*}+\lambda_k +  \| u-\overline{u}_k\|_{H^1(\Omega)} + \|\overline{m}_k-{m}_{k,\lambda_k}\|_{\Omega}
\end{equation}
for all $k$ sufficiently large.
It then follows from~\eqref{eq:HJB_discrete_stability_5} that
\begin{equation}\label{eq:HJB_discrete_stability_8}
\|\overline{u}_{k}-{u}_{k,\lambda_k}\|_{H^1(\Omega)}\lesssim\|\RkH(\overline{u}_k,\overline{m}_k)\|_{V_k^*}+\lambda_k+\| u-\overline{u}_k\|_{H^1(\Omega)} +\|\overline{m}_k-{m}_{k,\lambda_k}\|_{\Omega},
\end{equation}
for all $k$ sufficiently large.
Since $\|u-u_{\lambda_k}\|_{H^1(\Omega)}\lesssim \lambda_k^{\frac{1}{2}}$ for sufficiently large $k\in\mathbb{N}$, as shown by Theorem~\ref{thm:pdi_reg_rate_lambda}, and $\lambda_k\to 0$ in $(0,1]$ as $k\to\infty$ by hypothesis, we apply the triangle inequality to the term $\norm{u-\overline{u}_k}_{H^1(\Omega)}$ in \eqref{eq:HJB_discrete_stability_8} to obtain the desired bound \eqref{eq:reg_HJB_discrete_stability}.
\end{proof}

\subsection{Stability of discretized KFP equation and nonnegative approximations of the density}

{We will now consider the consistency and stability properties of the discretized KFP equation with regularized Hamiltonians.}
Let the discrete residual operator $\RkF\colon V_k\times V_k\to V_k^*$ for the regularized KFP equation be defined by
\begin{multline}\label{eq:KFP_residual_def}
\langle \RkF(\overline{u}_k,\overline{m}_k),w_k\rangle_{V_k^*\times V_k}
\\  \coloneqq \langle G,w_k\rangle_{H^{-1}\times H^1_0} - \left(A_k\nabla \overline{m}_k , \nabla w_k\right)_\Omega - \left( \overline{m}_k\frac{\partial H_{\lambda_k}}{\partial p}[\nabla \overline{u}_k] , \nabla w_k\right)_\Omega \quad \forall w_k\in V_k,
\end{multline}
for all $(\overline{u}_k,\overline{m}_k)\in V_k\times V_k$.

\paragraph{Stability of discretized KFP equation.}

{Recall that $(u_{\klk},m_{\klk})$ denotes the unique solution of the discretized MFG system~\eqref{eq:regularized_fem_system} with regularized Hamiltonians.
The following Lemma shows the stability of the discretized KFP equation with respect to the density variable, when fixing the first argument of the residual operator $\RkF$ to be the discrete value function $u_{\klk}$.}
\begin{lemma}[Stability of discrete KFP equation]\label{lem:regularized_KFP_stability}
Assume~\ref{ass:bounded} and \ref{ass:dmp}.
Then, for all $k\in\N$, we have
\begin{equation}\label{eq:regularized_KFP_stability}
\norm{m_{\klk}-\overline{m}_k}_{H^1(\Omega)} \lesssim \norm{\RkF(u_{\klk},\overline{m}_k)}_{V_k^*} \quad \forall \,\overline{m}_k\in V_k,
\end{equation}
where the constant depends only on $\Cstab$.
\end{lemma}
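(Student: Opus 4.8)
The plan is to use the uniform stability bound~\eqref{eq:infsup_stab_linop_2} for the adjoint of a suitable linear operator in the class $W(V_k,\Dk)$, much as in the proof of Lemma~\ref{lem:PDI_HJB_stability}, but now with the transport field coming from the partial derivative of the regularized Hamiltonian at the discrete value function $u_{\klk}$. First I would fix $k\in\N$ and $\overline{m}_k\in V_k$, and introduce the drift $\widetilde{b}_k \coloneqq \frac{\partial H_{\lambda_k}}{\partial p}[\nabla u_{\klk}]$, which by~\eqref{eq:reg_H_deriv_linf_bound} satisfies $\norm{\widetilde{b}_k}_{L^\infty(\Omega;\R^d)}\leq L_H$. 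Define the linear operator $L_k\colon V_k\to V_k^*$ by $\langle L_k z_k, v_k\rangle_{V_k^*\times V_k} \coloneqq (A_k\nabla z_k,\nabla v_k)_\Omega + (\widetilde{b}_k\cdot \nabla z_k, v_k)_\Omega$ for all $z_k,v_k\in V_k$. Since $A_k$ has the form~\eqref{eq:Ak_def} with $\Dk$ satisfying~\ref{ass:bounded}, and $\norm{\widetilde{b}_k}_{L^\infty}\leq L_H$, we have $L_k\in W(V_k,\Dk)$.

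Next I would observe that the KFP residual can be written in terms of the adjoint $L_k^*$: for any $w_k\in V_k$,
\begin{equation}
\langle L_k^*(m_{\klk}-\overline{m}_k), w_k\rangle_{V_k^*\times V_k} = \langle L_k w_k, m_{\klk}-\overline{m}_k\rangle_{V_k^*\times V_k} = (A_k\nabla w_k, \nabla(m_{\klk}-\overline{m}_k))_\Omega + (\widetilde{b}_k\cdot\nabla w_k, m_{\klk}-\overline{m}_k)_\Omega.
\end{equation}
Using the symmetry of $A_k$, the first term equals $(A_k\nabla(m_{\klk}-\overline{m}_k),\nabla w_k)_\Omega$, and the second term equals $((m_{\klk}-\overline{m}_k)\widetilde{b}_k,\nabla w_k)_\Omega$. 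Since $(u_{\klk},m_{\klk})$ solves~\eqref{eq:regularized_fem_2}, i.e.\ $(A_k\nabla m_{\klk},\nabla w_k)_\Omega + (m_{\klk}\widetilde{b}_k,\nabla w_k)_\Omega = \langle G,w_k\rangle_{H^{-1}\times H^1_0}$, and recalling the definition~\eqref{eq:KFP_residual_def} of $\RkF(u_{\klk},\overline{m}_k)$, we get that $\langle L_k^*(m_{\klk}-\overline{m}_k),w_k\rangle_{V_k^*\times V_k} = \langle \RkF(u_{\klk},\overline{m}_k), w_k\rangle_{V_k^*\times V_k}$ for all $w_k\in V_k$, hence $L_k^*(m_{\klk}-\overline{m}_k) = \RkF(u_{\klk},\overline{m}_k)$ in $V_k^*$.

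Finally, applying the adjoint stability bound~\eqref{eq:infsup_stab_linop_2} to the element $m_{\klk}-\overline{m}_k\in V_k$ gives $\norm{m_{\klk}-\overline{m}_k}_{H^1(\Omega)}\leq \Cstab\norm{L_k^*(m_{\klk}-\overline{m}_k)}_{V_k^*} = \Cstab\norm{\RkF(u_{\klk},\overline{m}_k)}_{V_k^*}$, which is exactly~\eqref{eq:regularized_KFP_stability} with constant depending only on $\Cstab$. This argument is essentially routine once the right operator is identified; the only point requiring a little care is the matching of the residual with $L_k^*$ rather than $L_k$, which is why the adjoint form~\eqref{eq:infsup_stab_linop_2} of the stability bound is needed, reflecting that the KFP equation is in divergence (adjoint) form relative to the HJB equation. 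No higher regularity or semismoothness is needed here, so I anticipate no substantial obstacle.
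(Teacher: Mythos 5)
Your proposal is correct and follows essentially the same route as the paper: the paper's proof also applies the adjoint stability bound~\eqref{eq:infsup_stab_linop_2} to $m_{\klk}-\overline{m}_k$ for the operator in $W(V_k,\Dk)$ with drift $\frac{\partial H_{\lambda_k}}{\partial p}[\nabla u_{\klk}]$ (admissible by~\eqref{eq:reg_H_deriv_linf_bound}), and then uses the symmetry of $A_k$ together with the discrete KFP equation~\eqref{eq:regularized_fem_2} to identify the resulting dual norm with $\norm{\RkF(u_{\klk},\overline{m}_k)}_{V_k^*}$. Your explicit identification $L_k^*(m_{\klk}-\overline{m}_k)=\RkF(u_{\klk},\overline{m}_k)$ is just a slightly more formal phrasing of the same computation.
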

{
\begin{proof}
Let $k\in\N $ and let $\overline{m}_k\in V_k$ be arbitrary. 
Then, the inf-sup stability bound \eqref{eq:infsup_stab_linop_2}, with $w_k\coloneqq m_{\klk} - \overline{m}_k$, gives
\begin{equation}
\begin{aligned}
&\norm{m_{\klk} - \overline{m}_k}_{H^1(\Omega)} \\
&\lesssim \sup_{\substack{w_k\in V_k:\\ \|w_k\|_{H^1(\Omega)}=1}}\left[ (A_k\nabla(m_{\klk} - \overline{m}_k), \nabla w_k)_\Omega + \left((m_{\klk} - \overline{m}_k){\frac{\partial H_{\lambda_k}}{\partial p}[\nabla u_{\klk}]},\nabla w_k\right)_\Omega \right]
\\ &=  \sup_{\substack{w_k\in V_k:\\ \|w_k\|_{H^1(\Omega)}=1}}\left[ 
\langle G,w_k\rangle_{H^{-1}\times H^1_0} - ( A_k\nabla\overline{m}_k , \nabla w_k)_\Omega - \left( \overline{m}_k\frac{\partial H_{\lambda_k}}{\partial p}[\nabla u_{\klk}], \nabla w_k\right)_\Omega \right]
\\ &= \sup_{\substack{w_k\in V_k:\\ \|w_k\|_{H^1(\Omega)}=1}} \langle \RkF(u_{\klk},\overline{m}_k),w_k\rangle_{V_k^*\times V_k} = \norm{\RkF(u_{\klk},\overline{m}_k)}_{V_k^*},
\end{aligned}
\end{equation}
where we have used the discrete KFP equation of \eqref{eq:regularized_fem_system} in the second line above.
\end{proof}}

\paragraph{Nonnegative approximations of the density.}

{It is well-known from the analysis of Lasry \& Lions~\cite{lasry2007mean} that the nonnegativity of the density function plays an important role in the uniqueness proof of the solution of the MFG system with monotone couplings. It is therefore natural that approximations of the density that remain nonnegative in the whole domain will play an important role in the current analysis on the rates of convergence.}
{We are therefore particularly interested in functions $\overline{m}_k \in\Vkp$ where $\Vkp$ is defined as} the set of functions in $V_k$ that are nonnegative in $\Omega$.
{In addition, to deal with the challenge of possibly limited regularity of the density functions $m$ and $m_{\lambda_k}$, we are particularly interested in choices of nonnegative approximations $\overline{m}_k \in \Vkp$ that enable a consistency bound for the discrete residual $\RkF$ that involves only the $L^2$-norm of the difference $m_{\lamk}-\overline{m}_k$ to appear on the right-hand side.}
{This motivates the following choice of approximation: let} $\mkp \in V_k$ be the unique solution of
\begin{equation}\label{eq:mkp_def}
(A_k\nabla \mkp,  \nabla w_k)_\Omega+\left( \mkp\frac{\partial H_{\lambda_k}}{\partial p}[\nabla u_{\lambda_k}] , \nabla w_k\right)_\Omega =\langle G,w_k\rangle_{H^{-1}\times H_0^1}\quad\forall w_k\in V_k.
\end{equation}
{Observe that it is the continuous value function $u_{\lambda_k}$ from~\eqref{eq:regularized_weakform} that serves as the argument to the partial derivative of the regularized Hamiltonian $H_{\lamk}$ in~\eqref{eq:mkp_def} above.}
{Under the hypotheses~\ref{ass:bounded} and~\ref{ass:dmp}, the linear operator acting on $V_k$ that is appearing in~\eqref{eq:mkp_def} is in the class $W(V_k,\Dk)$, so it follows that $\mkp$ is well-defined in $V_k$ for each $k\in\N$.}
Moreover, the DMP implies that $\mkp \in \Vkp$ since $G$ is nonnegative in the sense of distributions.
It is also straightforward to show from~\eqref{eq:infsup_stab_linop_2} that {{$\mkp$} satisfies $\norm{\mkp}_{H^1(\Omega)}\lesssim \norm{G}_{H^{-1}(\Omega)}$ for some constant that depends only on $\Omega$, $\nu$, $L_H$, and $\Cstab$.}

{We now show two main properties of the discrete function $\mkp$ defined above.
The first result in Lemma~\ref{lem:KFP_res_consistency} below shows that the approximation $\mkp$ defined above yields a consistency bound for the residual operator $\RkF$ that involves the $L^2$-norm of the difference $m_{\lamk}-\mkp$ in the right-hand side, rather than the $H^1$-norm that would otherwise appear for a generic choice of $\overline{m}_k$.
The second result, in Lemma~\ref{lem:mkstar_l2_approx} below, then gives an \emph{a priori} bound on the $L^2$-norm error $\norm{m_{\lamk}-\mkp}_{\Omega}$ with respect to the mesh-size $h_k$ that does not require higher-regularity of $m_{\lamk}$ beyond minimal regularity in $H^1$.}
\begin{lemma}\label{lem:KFP_res_consistency}
Assume~\ref{ass:bounded} and~\ref{ass:dmp}. 
Then
\begin{equation}\label{eq:KFP_res_consistency}
\|\RkF(\overline{u}_k,\mkp)\|_{V_k^*}
\lesssim \lambda_k^{-1}\norm{u_{\lambda_k}-\overline{u}_k}_{H^1(\Omega)} + \norm{m_{\lambda_k}-\mkp}_{\Omega} \quad \forall \, \overline{u}_k\in V_k,
\end{equation}
where the hidden constant depends only on $L_H$ and $\Mfty$.
\end{lemma}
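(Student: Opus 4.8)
The plan is to exploit the defining equation~\eqref{eq:mkp_def} for $\mkp$ in order to reduce the residual to a single telescoping term, and then to estimate that term by decomposing $\mkp$ into the smooth part $m_{\lambda_k}$, which enjoys a mesh-independent $L^\infty$-bound, plus the possibly low-regularity error $\mkp-m_{\lambda_k}$.

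First I would fix $k\in\N$, $\overline{u}_k\in V_k$, and an arbitrary $w_k\in V_k$, and subtract the definition~\eqref{eq:mkp_def} of $\mkp$ tested against $w_k$ from the definition~\eqref{eq:KFP_residual_def} of $\langle \RkF(\overline{u}_k,\mkp),w_k\rangle_{V_k^*\times V_k}$. The terms $\langle G,w_k\rangle_{H^{-1}\times H^1_0}$ and $(A_k\nabla\mkp,\nabla w_k)_\Omega$ cancel, leaving the identity
\begin{equation}
\langle \RkF(\overline{u}_k,\mkp),w_k\rangle_{V_k^*\times V_k} = \left(\mkp\left(\frac{\partial H_{\lambda_k}}{\partial p}[\nabla u_{\lambda_k}] - \frac{\partial H_{\lambda_k}}{\partial p}[\nabla \overline{u}_k]\right),\nabla w_k\right)_\Omega .
\end{equation}
The naive estimate of the right-hand side would require an $L^\infty$-bound on $\mkp$, which is not available uniformly in $k$ since one only controls $\norm{\mkp}_{H^1(\Omega)}$ and $H^1(\Omega)$ does not embed into $L^\infty(\Omega)$ in general. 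To circumvent this, I would write $\mkp = m_{\lambda_k} + (\mkp - m_{\lambda_k})$, producing two contributions to the right-hand side above that are treated differently.

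For the contribution carrying $m_{\lambda_k}$, I would use the uniform bound $\norm{m_{\lambda_k}}_{L^\infty(\Omega)}\leq\Mfty$ from~\eqref{eq:m_linfty_bound} together with the Lipschitz bound~\eqref{eq:reg_H_deriv_Lipschitz} for $\frac{\partial H_{\lambda_k}}{\partial p}$, whose Lipschitz constant is $\lambda_k^{-1}$, to obtain via the Cauchy--Schwarz inequality a bound of order $\Mfty\,\lambda_k^{-1}\norm{\nabla(u_{\lambda_k}-\overline{u}_k)}_\Omega\,\norm{\nabla w_k}_\Omega$; this is exactly the term responsible for the negative power of $\lambda_k$ in~\eqref{eq:KFP_res_consistency}. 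For the contribution carrying $\mkp - m_{\lambda_k}$, I would instead use only the pointwise uniform bound $\lvert\frac{\partial H_{\lambda_k}}{\partial p}(x,p)\rvert\leq L_H$ from~\eqref{eq:reg_H_deriv_linf_bound}, so that the difference of the two derivatives is bounded pointwise by $2L_H$, and then Cauchy--Schwarz gives a bound of order $L_H\,\norm{\mkp - m_{\lambda_k}}_\Omega\,\norm{\nabla w_k}_\Omega$. Adding the two estimates, using $\norm{\nabla w_k}_\Omega\leq\norm{w_k}_{H^1(\Omega)}$, dividing by $\norm{w_k}_{H^1(\Omega)}$, and taking the supremum over $w_k\in V_k\setminus\{0\}$ in the definition~\eqref{eq:Vkstar_norm} of the dual norm then yields~\eqref{eq:KFP_res_consistency} with a constant depending only on $L_H$ and $\Mfty$; since $\overline{u}_k\in V_k$ was arbitrary, the proof is complete.

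There is no serious analytical obstacle in this lemma; the one point that requires care---and the very reason for introducing $\mkp$ with the \emph{continuous} value function $u_{\lambda_k}$ in~\eqref{eq:mkp_def}---is the splitting $\mkp = m_{\lambda_k}+(\mkp-m_{\lambda_k})$, which pairs the dangerous factor $\lambda_k^{-1}$ only with the density $m_{\lambda_k}$, for which a mesh-independent $L^\infty$-bound holds, while the possibly low-regularity error $\mkp-m_{\lambda_k}$ enters only through its $L^2$-norm with a coefficient independent of $\lambda_k$.
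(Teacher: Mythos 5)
Your proposal is correct and follows essentially the same argument as the paper: subtract the defining equation \eqref{eq:mkp_def} of $\mkp$ to reduce the residual to the single term $\bigl(\mkp\bigl(\frac{\partial H_{\lambda_k}}{\partial p}[\nabla u_{\lambda_k}]-\frac{\partial H_{\lambda_k}}{\partial p}[\nabla\overline{u}_k]\bigr),\nabla w_k\bigr)_\Omega$, then split $\mkp=m_{\lambda_k}+(\mkp-m_{\lambda_k})$, using the uniform $L^\infty$-bound \eqref{eq:m_linfty_bound} with the $\lambda_k^{-1}$-Lipschitz bound \eqref{eq:reg_H_deriv_Lipschitz} for the first piece and the uniform bound \eqref{eq:reg_H_deriv_linf_bound} with the $L^2$-norm for the second. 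This is precisely the paper's proof, including the motivation for pairing the $\lambda_k^{-1}$ factor only with $m_{\lambda_k}$.
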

\begin{proof}
Fix $k\in\mathbb{N}$ and let $\overline{u}_k\in V_k$ be given. 
For any $w_k \in V_k$, it follows from the definition of $\mkp$ given in~\eqref{eq:mkp_def} that, for any $w_k\in V_k$,
\begin{equation}
\begin{aligned}
 &\langle \RkF(\overline{u}_k,\mkp),w_k\rangle_{V_k^*\times V_k}
\\ 
 &=\langle G,w_k\rangle_{H^{-1}\times H_0^1} - ( A_k\nabla \mkp, \nabla w_k)_\Omega-\left( \mkp\frac{\partial H_{\lambda_k}}{\partial p}[\nabla \overline{u}_k] , \nabla w_k \right)_\Omega  
\\
 &= \left( \mkp \left(\frac{\partial H_{\lambda_k}}{\partial p}[\nabla u_{\lambda_k}] - \frac{\partial H_{\lambda_k}}{\partial p}[\nabla \overline{u}_k]  \right) , \nabla w_k \right)_\Omega  
\\
 &= \left(  m_{\lambda_k}\left(\frac{\partial H_{\lambda_k}}{\partial p}[\nabla u_{\lambda_k}] - \frac{\partial H_{\lambda_k}}{\partial p}[\nabla \overline{u}_k]\right) , \nabla w_k\right)_\Omega 
  \\ & \qquad\qquad\qquad\qquad+
  \left(  (\mkp-m_{\lambda_k})\left(\frac{\partial H_{\lambda_k}}{\partial p}[\nabla u_{\lambda_k}] - \frac{\partial H_{\lambda_k}}{\partial p}[\nabla \overline{u}_k]\right),\nabla w_k\right)_\Omega .
\end{aligned}
\end{equation}
Consequently, it is found that
\begin{multline}
\norm{\RkF(\overline{u}_k,\mkp)}_{V_k^*}
\\
\leq {M_{\infty}} \left\|\frac{\partial H_{\lambda_k}}{\partial p}[\nabla u_{\lambda_k}] - \frac{\partial H_{\lambda_k}}{\partial p}[\nabla \overline{u}_k]\right\|_{\Omega} + 2L_H\|\mkp-m_{\lambda_k}\|_{\Omega} ,
\end{multline}
where we have applied \eqref{eq:reg_H_deriv_linf_bound} and the fact that $m_{\lambda_k}\in L^{\infty}(\Omega)$ by \eqref{eq:m_linfty_bound}. Then the Lipschitz property \eqref{eq:reg_H_deriv_Lipschitz} implies that
\begin{equation}
\|\RkF(\overline{u}_k,\mkp)\|_{V_k^*}\leq  {\Mfty} \lambda_k^{-1}\|u_{\lambda_k}-\overline{u}_k\|_{H^1(\Omega)} + 2L_H\|m_{\lambda_k}-\mkp\|_{\Omega},
\end{equation}
which shows~\eqref{eq:KFP_res_consistency}.
\end{proof}
\begin{lemma}\label{lem:mkstar_l2_approx}
{Assume~\ref{ass:bounded} and~\ref{ass:dmp}.}
Then, 
\begin{equation}\label{eq:mkstar_l2_approx}
\|m_{\lambda_k}-\mkp\|_{\Omega}\lesssim h_k^{\gamma} \quad \forall k\in \N.
\end{equation}
\end{lemma}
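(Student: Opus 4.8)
The plan is to prove~\eqref{eq:mkstar_l2_approx} by an Aubin--Nitsche duality argument. The crucial points are that the error $e\coloneqq m_{\lambda_k}-\mkp$ satisfies a Galerkin-type orthogonality relation up to a stabilization consistency term, and that $m_{\lambda_k}$ and $\mkp$ are both uniformly bounded in $H^1(\Omega)$, so that no higher regularity of either function is needed. Write $\widetilde{b}_k\coloneqq \frac{\partial H_{\lambda_k}}{\partial p}[\nabla u_{\lambda_k}]$, which satisfies $\|\widetilde{b}_k\|_{L^\infty(\Omega;\R^d)}\leq L_H$ by~\eqref{eq:reg_H_deriv_linf_bound}, and let $B_c(\varphi,\psi)\coloneqq \nu(\nabla\varphi,\nabla\psi)_\Omega+(\varphi\,\widetilde{b}_k,\nabla\psi)_\Omega$ be the bilinear form of the continuous regularized KFP equation~\eqref{eq:regularized_weakform_2}. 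Since $m_{\lambda_k}$ solves~\eqref{eq:regularized_weakform_2}, $\mkp$ solves~\eqref{eq:mkp_def}, both with the same right-hand side $\langle G,\cdot\rangle$, and $V_k\subset H^1_0(\Omega)$, subtracting the two equations and using $A_k=\nu\mathbb{I}_d+D_k$ from~\eqref{eq:Ak_def} gives
\begin{equation*}
B_c(e,w_k)=(D_k\nabla\mkp,\nabla w_k)_\Omega\qquad\forall\,w_k\in V_k.
\end{equation*}
By the a priori bound~\eqref{eq:regularized_apriori_bounds_m} and the bound $\|\mkp\|_{H^1(\Omega)}\lesssim\|G\|_{H^{-1}(\Omega)}$ noted after~\eqref{eq:mkp_def}, we have $\|e\|_{H^1(\Omega)}\lesssim 1$ uniformly in $k$.

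Next, I would introduce the dual problem: let $z\in H^1_0(\Omega)$ solve $B_c(v,z)=(v,e)_\Omega$ for all $v\in H^1_0(\Omega)$, which is the weak form of $-\nu\Delta z+\widetilde{b}_k\cdot\nabla z=e$. This problem is well-posed, and moreover $\|\nabla z\|_\Omega\lesssim\|e\|_\Omega$ with a constant independent of $k$, since the dual operator has first-order coefficient bounded by $L_H$. Rewriting the equation as $(\nabla z,\nabla v)_\Omega=\tfrac1\nu(e-\widetilde{b}_k\cdot\nabla z,v)_\Omega$ for all $v\in H^1_0(\Omega)$ and observing that $e-\widetilde{b}_k\cdot\nabla z\in L^2(\Omega)$ with $\|e-\widetilde{b}_k\cdot\nabla z\|_\Omega\lesssim\|e\|_\Omega$, the elliptic regularity estimate~\eqref{eq:elliptic_regularity} then yields $z\in H^{1+\gamma}(\Omega)$ with $\|z\|_{H^{1+\gamma}(\Omega)}\lesssim\|e\|_\Omega$.

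The estimate then follows by testing the dual problem with $v=e$ and using the orthogonality relation with $w_k=I_kz$, where $I_kz\in V_k$ denotes the Scott--Zhang quasi-interpolant of $z$~\cite{scott1990finite}:
\begin{equation*}
\|e\|_\Omega^2=B_c(e,z)=B_c(e,z-I_kz)+(D_k\nabla\mkp,\nabla I_kz)_\Omega.
\end{equation*}
For the first term, $|B_c(e,z-I_kz)|\leq(\nu\|\nabla e\|_\Omega+L_H\|e\|_\Omega)\,\|z-I_kz\|_{H^1(\Omega)}$, and combining the stability and approximation properties of $I_k$ with the theory of interpolation spaces~\cite{AdamsFournier03} gives $\|z-I_kz\|_{H^1(\Omega)}\lesssim h_k^\gamma\|z\|_{H^{1+\gamma}(\Omega)}\lesssim h_k^\gamma\|e\|_\Omega$. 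For the second term, hypothesis~\ref{ass:bounded} and the $H^1$-stability of $I_k$ yield $|(D_k\nabla\mkp,\nabla I_kz)_\Omega|\leq\CD\|h_{\Tk}\nabla\mkp\|_\Omega\|\nabla I_kz\|_\Omega\lesssim h_k\|\mkp\|_{H^1(\Omega)}\|z\|_{H^1(\Omega)}\lesssim h_k\|e\|_\Omega$. Substituting these bounds, using $\|\nabla e\|_\Omega\lesssim 1$, dividing through by $\|e\|_\Omega$ (the inequality being trivial when $e=0$), and using $h_k\lesssim h_k^\gamma$ since $\gamma\in(0,1]$ and the mesh-sizes $h_k$ are bounded, we obtain $\|e\|_\Omega\lesssim h_k^\gamma$, i.e.~\eqref{eq:mkstar_l2_approx}.

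The step I expect to require the most care is the uniform-in-$k$ well-posedness bound $\|\nabla z\|_\Omega\lesssim\|e\|_\Omega$ for the dual problem: the operator $-\nu\Delta+\widetilde{b}_k\cdot\nabla$ need not be coercive when $L_H$ is large relative to $\nu$, so this bound must be obtained either as a consequence of the (uniform) invertibility of the KFP operator of~\eqref{eq:regularized_weakform_2} --- of which the dual operator is the Hilbert-space adjoint, hence has an inverse of the same norm --- or directly from standard elliptic estimates for second-order linear operators whose first-order coefficient is bounded by $L_H$, uniformly over all admissible $\widetilde{b}_k$. The remaining steps are routine.
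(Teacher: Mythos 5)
Your proposal is correct and follows essentially the same route as the paper's proof: the same duality argument with the dual problem $-\nu\Delta z+\frac{\partial H_{\lambda_k}}{\partial p}[\nabla u_{\lambda_k}]\cdot\nabla z = m_{\lambda_k}-\mkp$, the same Galerkin-type orthogonality up to the $D_k$ consistency term, Scott--Zhang interpolation combined with the elliptic regularity bound~\eqref{eq:elliptic_regularity}, and only the uniform $H^1$ bounds on $m_{\lambda_k}$ and $\mkp$. The uniform-in-$k$ stability of the dual problem that you flag is handled in the paper exactly as you suggest, by citing well-posedness of the (adjoint of the) KFP operator with a constant depending only on $d$, $\nu$, $L_H$ and $\Omega$ (via \cite[Theorem~8.3]{gilbarg2015elliptic} and \cite[Lemma~4.5]{osborne2022analysis,osborne2024erratum}).
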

{
\begin{proof}
The proof follows the duality argument of \cite[Lemma 8.2]{osborne2024near}, so we only sketch the main ideas of the proof. As such, for each $k\in\mathbb{N}$, let $z_k$ be the unique function in $H_0^1(\Omega)$ such that 
\begin{equation}\label{eq:mkstar_l2_approx_1}
\nu(\nabla z_k, \nabla w)_\Omega+\left(\frac{\partial H_{\lambda_k}}{\partial p}[\nabla u_{\lambda_k}]\cdot\nabla z_k, w\right)_\Omega = (m_{\lambda_k}-\mkp,w)_\Omega \quad\forall w\in H_0^1(\Omega).
\end{equation}
Note that the existence and uniqueness of $z_k$ solving \eqref{eq:mkstar_l2_approx_1} for each $k\in\mathbb{N}$ is guaranteed by {\cite[Theorem~8.3]{gilbarg2015elliptic}}. Moreover $\norm{z_k}_{H^1(\Omega)}\lesssim \norm{m_{\lambda_k}-\mkp}_{\Omega}$ with a hidden constant depending only on $d$, $\nu$, $L_H$, and on $\Omega$, see also~\cite[Lemma~4.5]{osborne2022analysis,osborne2024erratum}.
Since the vector field $\frac{\partial H_{\lambda_k}}{\partial p}[\nabla u_{\lambda_k}]$ is in $ L^\infty(\Omega;\R^\dim)$, and we have that $m_{\lambda_k}-\mkp\in L^2(\Omega)$, we deduce from~{the elliptic regularity bound~\eqref{eq:elliptic_regularity}} that $z_k\in H^{1+\gamma}(\Omega)$.
Furthermore, we have the norm bound $\norm{z_k}_{H^{1+\gamma}(\Omega)} \lesssim\norm{m_{\lambda_k}-\mkp}_{\Omega}$. {We then follow the remainder of the duality argument of \cite[Lemma 8.2]{osborne2024near} to conclude the proof of the lemma. Indeed, since 
\begin{equation}\label{eq:mkstar_l2_approx_2}
\norm{z_k-I^{\mathrm{SZ}}_k {z}_k}_{H^1(\Omega)}\lesssim h_k^{\gamma}\norm{z_k}_{H^{1+\gamma}(\Omega)}, \quad \norm{I^{\mathrm{SZ}}_k {z}_k}_{H^1(\Omega)}\lesssim \norm{z_k}_{H^1(\Omega)},
\end{equation} where $I^{\mathrm{SZ}}_k {z}_k\in V_k$ denotes the Scott--Zhang quasi-interpolant of $z_k$ \cite{scott1990finite}, we obtain that $\norm{z_k-I^{\mathrm{SZ}}_k{z}_k}_{H^1(\Omega)}\lesssim h_k^{\gamma}\norm{m_{\lambda_k}-\mkp}_{\Omega}$ and $\norm{I^{\mathrm{SZ}}_k{z}_k}_{H^1(\Omega)}\lesssim\norm{m_{\lambda_k}-\mkp}_{\Omega}$. Then, taking $w=m_{\lambda_k}-\mkp$ in \eqref{eq:mkstar_l2_approx_1}, and using the definition of~$\mkp$ from~\eqref{eq:mkp_def}, we eventually get
\begin{multline}\label{eq:duality_1}
	\norm{m_{\lambda_k}-\mkp}_\Omega^2   =  \nu \left(\nabla (z_k-I^{\mathrm{SZ}}_k{z}_k),\nabla(m_{\lambda_k}-\mkp)\right)_{\Omega}\\+\left(m_{\lambda_k}-\mkp,\frac{\partial H}{\partial p}[\nabla u_{\lambda_k}]\cdot\nabla (z_k-I^{\mathrm{SZ}}_k{z}_k)\right)_{\Omega}
	\\+\left(\Dk\nabla\mkp,\nabla I^{\mathrm{SZ}}_k{z}_k\right)_{\Omega}.
\end{multline}
Combining~\eqref{eq:duality_1} with~\eqref{eq:mkstar_l2_approx_2}, we obtain
\begin{multline}\label{eq:duality_3}
	\norm{m_{\lambda_k}-\mkp}_\Omega^2 \\
	\lesssim  h_k^{\gamma}\norm{m_{\lambda_k}-\mkp}_{H^1(\Omega)}\norm{z_k}_{H^{1+\gamma}(\Omega)}+
	\norm{\Dk\nabla \mkp}_\Omega\norm{m_{\lambda_k}-\mkp}_{\Omega}
	\\ \lesssim h_k^{\gamma}\left(\norm{m_{\lambda_k}}_{H^1(\Omega)}+\norm{\mkp}_{H^1(\Omega)}\right) \norm{m_{\lambda_k}-\mkp}_{\Omega}
\end{multline}
where in the last line we used the bound on $\Dk$ from~\ref{ass:bounded} and the elliptic regularity of $z_k$ above. Simplifying then gives
\begin{equation}
	\|m_{\lambda_k}-\mkp\|_{\Omega}\lesssim h_k^{\gamma} \left(\norm{m_{\lambda_k}}_{H^1(\Omega)}+\norm{\mkp}_{H^1(\Omega)}\right)
\end{equation}
The stability bounds \eqref{eq:regularized_apriori_bounds_m} and $\norm{\mkp}_{H^1(\Omega)}\lesssim \norm{G}_{H^{-1}(\Omega)}$ then yield \eqref{eq:mkstar_l2_approx}.}
\end{proof}}

\subsection{Proof of Lemma~\ref{lem:discrete_reg_l2_bound}}

By following the argument in \cite[Lemma 6.2]{osborne2024near} and the existence of discrete approximations to \eqref{eq:regularized_fem_system} by Lemma \ref{lem:reg_discrete_convergence}, we have the $L^2$-norm stability in the density component for the regularized discrete MFG system~\eqref{eq:regularized_fem_system}, when restricted to the set of nonnegative functions in~$\Vkp$. {
\begin{lemma}\label{lem:L2_strong_mono_bound}
Assume~\ref{ass:dmp}. 
For each $k\in\mathbb{N}$, any $\overline{u}_k\in V_k$ and $\overline{m}_k\in \Vkp$, we have
\begin{multline}\label{eq:L2_strong_mono_bound}
c_F\|\overline{m}_k-m_{\klk}\|_{\Omega}^2\\
\leq 
\langle \RkH(\overline{u}_k,\overline{m}_k),\overline{m}_k-m_{\klk}\rangle_{V_k^*\times V_k} - \langle \RkF(\overline{u}_k,\overline{m}_k),\overline{u}_k-u_{\klk}\rangle_{V_k^*\times V_k}.
\end{multline}
\end{lemma}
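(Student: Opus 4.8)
The plan is to exploit the fact that the pair $(u_{\klk},m_{\klk})$ is an exact zero of both discrete residual operators, i.e.\ $\RkH(u_{\klk},m_{\klk})=0$ and $\RkF(u_{\klk},m_{\klk})=0$, which follows directly from the discrete HJB equation~\eqref{eq:regularized_fem_1} and the discrete KFP equation~\eqref{eq:regularized_fem_2}. Thus I would first rewrite
\begin{equation*}
\langle \RkH(\overline{u}_k,\overline{m}_k),\overline{m}_k-m_{\klk}\rangle_{V_k^*\times V_k} = \langle \RkH(\overline{u}_k,\overline{m}_k)-\RkH(u_{\klk},m_{\klk}),\overline{m}_k-m_{\klk}\rangle_{V_k^*\times V_k},
\end{equation*}
and likewise for $\RkF(\overline{u}_k,\overline{m}_k)$ tested against $\overline{u}_k-u_{\klk}$. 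Expanding the definitions~\eqref{eq:HJB_residual_def} and~\eqref{eq:KFP_residual_def} and adding the two resulting identities, the two terms carrying the numerical diffusion tensor, namely $-(A_k\nabla(\overline{u}_k-u_{\klk}),\nabla(\overline{m}_k-m_{\klk}))_\Omega$ and $(A_k\nabla(\overline{m}_k-m_{\klk}),\nabla(\overline{u}_k-u_{\klk}))_\Omega$, cancel on account of the symmetry of $A_k$ guaranteed by~\ref{ass:bounded}.

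After this cancellation, what survives is the identity
\begin{equation*}
\langle \RkH(\overline{u}_k,\overline{m}_k),\overline{m}_k-m_{\klk}\rangle_{V_k^*\times V_k} - \langle \RkF(\overline{u}_k,\overline{m}_k),\overline{u}_k-u_{\klk}\rangle_{V_k^*\times V_k} = (F[\overline{m}_k]-F[m_{\klk}],\overline{m}_k-m_{\klk})_\Omega + R,
\end{equation*}
with
\begin{equation*}
R \coloneqq -\bigl(\overline{m}_k-m_{\klk},\, H_{\lambda_k}[\nabla \overline{u}_k]-H_{\lambda_k}[\nabla u_{\klk}]\bigr)_\Omega + \left(\overline{m}_k\frac{\partial H_{\lambda_k}}{\partial p}[\nabla \overline{u}_k] - m_{\klk}\frac{\partial H_{\lambda_k}}{\partial p}[\nabla u_{\klk}],\, \nabla(\overline{u}_k-u_{\klk})\right)_\Omega.
\end{equation*}
Since $\overline{m}_k,m_{\klk}\in V_k\subset H^1_0(\Omega)$, the strong monotonicity~\eqref{eq:F_strong_mono} of $F$ on $H^1_0(\Omega)$ bounds the first term on the right below by $c_F\|\overline{m}_k-m_{\klk}\|_\Omega^2$, so the whole claim reduces to proving $R\ge 0$.

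The heart of the argument, and the only place where I anticipate any genuine subtlety, is the nonnegativity of $R$: one must keep the signs straight and identify precisely which hypotheses enter. I would rearrange the integrand of $R$ pointwise, collecting the coefficients of $\overline{m}_k(x)$ and of $m_{\klk}(x)$ separately; writing $p=\nabla\overline{u}_k(x)$ and $q=\nabla u_{\klk}(x)$, this integrand equals
\begin{equation*}
\overline{m}_k(x)\Bigl[H_{\lambda_k}(x,q)-H_{\lambda_k}(x,p)+\tfrac{\partial H_{\lambda_k}}{\partial p}(x,p)\cdot(p-q)\Bigr] + m_{\klk}(x)\Bigl[H_{\lambda_k}(x,p)-H_{\lambda_k}(x,q)-\tfrac{\partial H_{\lambda_k}}{\partial p}(x,q)\cdot(p-q)\Bigr].
\end{equation*}
Because $p\mapsto H_{\lambda_k}(x,p)$ is convex and $C^1$ with gradient $\tfrac{\partial H_{\lambda_k}}{\partial p}(x,\cdot)$, both bracketed quantities are nonnegative, being the subgradient inequality applied at $p$ and at $q$ respectively. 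It then suffices to observe that both density factors are nonnegative a.e.\ in $\Omega$: $\overline{m}_k\ge 0$ because $\overline{m}_k\in\Vkp$ by hypothesis, and $m_{\klk}\ge 0$ because $G$ is nonnegative in the sense of distributions and the discrete maximum principle holds under~\ref{ass:dmp}, exactly as in the proof of Lemma~\ref{lem:pdi_reg_discrete_lambda_rate}. Consequently the integrand is nonnegative a.e.\ in $\Omega$, so $R\ge0$, and combining with the strong monotonicity bound above yields~\eqref{eq:L2_strong_mono_bound}.
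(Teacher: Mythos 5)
Your proposal is correct and follows essentially the same route as the paper: subtract the residuals evaluated at $(u_{\klk},m_{\klk})$ (which vanish), cancel the $A_k$ terms by symmetry, recognize the remaining Hamiltonian terms as convexity residuals of $H_{\lambda_k}$ weighted by the nonnegative densities $\overline{m}_k$ and $m_{\klk}$, and conclude with the strong monotonicity of $F$. The pointwise regrouping into the two subgradient-inequality brackets is exactly the paper's decomposition via $R_{H_{\lambda_k}}$, so there is nothing to add.
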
}
{
\begin{proof}
	Let $\overline{u}_k\in V_k$ and $\overline{m}_k\in \Vkp$ be fixed but arbitrary.
	To abbreviate the notation, let
	\begin{equation}
		\Rk\coloneqq \langle {R}_{k,\lambda_k}^1(\overline{u}_k,\overline{m}_k),\overline{m}_k-m_{k,\lambda_k}\rangle_{V_k^*\times V_k} - \langle {R}_{k,\lambda_k}^2(\overline{u}_k,\overline{m}_k),\overline{u}_k-u_{k,\lambda_k}\rangle_{V_k^*\times V_k}.
	\end{equation} 
	Since $R_{k,\lambda_k}^1({u}_{k,\lambda_k},{m}_{k,\lambda_k})=0$ and $R_{k,\lambda_k}^2({u}_{k,\lambda_k},{m}_{k,\lambda_k})=0$ in $V_k^*$, we have that
	\begin{multline}\label{diff-expr}
			\Rk=\langle {R}_{k,\lambda_k}^1(\overline{u}_k,\overline{m}_k) - R_{k,\lambda_k}^1({u}_{k,\lambda_k},{m}_{k,\lambda_k}),\overline{m}_k-m_{k,\lambda_k}\rangle_{V_k^*\times V_k} \\
			- \langle {R}_{k,\lambda_k}^2(\overline{u}_k,\overline{m}_k) - R_{k,\lambda_k}^2({u}_{k,\lambda_k},{m}_{k,\lambda_k}),\overline{u}_k-u_{k,\lambda_k}\rangle_{V_k^*\times V_k}.
	\end{multline} 
	Using the discrete regularized HJB equation~\eqref{eq:regularized_fem_1} and the discrete regularized KFP equation~\eqref{eq:regularized_fem_2} we find that
		\begin{multline}\label{diff-expr-hjb}
			\langle {R}_{k,\lambda_k}^1(\overline{u}_k,\overline{m}_k) - R_{k,\lambda_k}^1({u}_{k,\lambda_k},{m}_{k,\lambda_k}),\overline{m}_k-m_{k,\lambda_k}\rangle_{V_k^*\times V_k}
			\\
			=\left(F[\overline{m}_k] - F[m_{k,\lambda_k}],\overline{m}_k-m_{k,\lambda_k}\right)_{\Omega} +\left(H_{\lambda_k}[\nabla u_{k,\lambda_k}] - H_{\lambda_k}[\nabla \overline{u}_k],\overline{m}_k-m_{k,\lambda_k}\right)_{\Omega}
			\\+\left(A_k\nabla (u_{k,\lambda_k}-\overline{u}_k),\nabla (\overline{m}_k-m_{k,\lambda_k})\right)_{\Omega},
		\end{multline}
		and 
		\begin{multline}\label{diff-expr-kfp}
			\langle {R}_{k,\lambda_k}^2(\overline{u}_k,\overline{m}_k) - R_{k,\lambda_k}^2({u}_{k,\lambda_k},{m}_{k,\lambda_k}),\overline{u}_k-u_{k,\lambda_k}\rangle_{V_k^*\times V_k} \\
			= \left(m_{k,\lambda_k}\frac{\partial H_{\lambda_k}}{\partial p}[\nabla u_{k,\lambda_k}]- \overline{m}_k\frac{\partial H_{\lambda_k}}{\partial p}[\nabla \overline{u}_k],\nabla (\overline{u}_k-u_{k,\lambda_k})\right)_{\Omega}\\+\left(A_k\nabla (\overline{m}_k-m_{k,\lambda_k}),\nabla (u_{k,\lambda_k}-\overline{u}_k)\right)_{\Omega}.
		\end{multline}
		Using the symmetry of $A_k$ we then obtain from \eqref{diff-expr}, \eqref{diff-expr-hjb} and \eqref{diff-expr-kfp} that
	\begin{multline}\label{eq:L2mono_bound_1}
		\Rk=\left(F[\overline{m}_k] - F[m_{k,\lambda_k}],\overline{m}_k-m_{k,\lambda_k}\right)_{\Omega}
		\\ + (m_{k,\lambda_k}, R_{H_{\lambda_k}}[\nabla\overline{u}_k,\nabla u_{k,\lambda_k}])_\Omega + (\overline{m}_k,R_{H_{\lambda_k}}[\nabla u_{k,\lambda_k},\nabla \overline{u}_k])_\Omega,
	\end{multline}
	where we define $R_{H_{\lambda_k}}[\nabla v,\nabla w]\coloneqq H_{\lambda_k}[\nabla v]-H_{\lambda_k}[\nabla w]-\frac{\partial H_{\lambda_k}}{\partial p}[\nabla w]\cdot \nabla(v-w)$ for any $v,\,w \in H^1(\Omega)$.
	Since $H_{\lambda_k}$ is convex and differentiable w.r.t.\ $p$, we have $R_{H_{\lambda_k}}[\nabla v,\nabla w]\geq 0$ a.e.\ for any $v,\,w\in H^1(\Omega)$. Also, we have $m_{k,\lambda_k}\geq 0$ in $\Omega$ from \ref{ass:dmp} and $\overline{m}_{k}\geq 0$ in $\Omega$ by hypothesis.
	Therefore the terms $(m_{k,\lambda_k}, R_{H_{\lambda_k}}[\nabla\overline{u}_k,\nabla u_{k,\lambda_k}])_\Omega$ and $(\overline{m}_k,R_{H_{\lambda_k}}[\nabla u_{k,\lambda_k},\nabla \overline{u}_k])_\Omega$ in~\eqref{eq:L2mono_bound_1} are both nonnegative, and thus the strong monotonicity of $F$, c.f.~\eqref{eq:F_strong_mono}, implies that
	\begin{equation}\label{eq:L2mono_bound_2}
		c_F\norm{\overline{m}_k-m_{k,\lambda_k}}_\Omega^2 \leq\left(F[\overline{m}_k] - F[m_{k,\lambda_k}],\overline{m}_k-m_{k,\lambda_k}\right)_{\Omega} \leq \Rk,
	\end{equation}
	which shows~\eqref{eq:L2_strong_mono_bound}.
\end{proof}}

{In the next step, we combine the individual stability properties of the discretized HJB and KFP equations with the bound on the $L^2$-norms of the density approximations from Lemma~\ref{lem:L2_strong_mono_bound} above to obtain a bound that relates to the overall stability of the discretized MFG system.}

{
\begin{lemma}\label{lem:apriori_mixed_norm}
Assume~\ref{ass:bounded} and \ref{ass:dmp}. For each $k\in \N$ and each $\overline{u}_k\in V_k$, define the nonnegative quantities
\begin{align}
\epsilon_k(\overline{u}_k) &\coloneqq \norm{u_{\lamk} - \overline{u}_k }_{H^1(\Omega)}+ \norm{m_{\lamk} - \mkp }_\Omega + \norm{h_{\Tk}\nabla u_{\lamk}}_\Omega + \lambda_k^{\frac{1}{2}},\label{eq:epsilon_k_def}
\\
\rho_k(\overline{u}_k) &\coloneqq \lambda_k^{-1}\norm{\RkH(\overline{u}_k,\mkp)}_{V_k^*}+\norm{\RkF(\overline{u}_k,\mkp)}_{V_k^*}.\label{eq:rho_k_def}
\end{align}
Then, there exists a $k_*\in \N$ such that
\begin{equation}\label{eq:apriori_mixed_norm_1}
\norm{\overline{u}_k-u_{k,\lamk}}_{H^1(\Omega)}+ \norm{\mkp-m_{k,\lamk}}_\Omega+\lambda_k \norm{\mkp-m_{k,\lamk}}_{H^1(\Omega)} \lesssim \epsilon_k(\overline{u}_k)+\rho_k(\overline{u}_k),
\end{equation}
for all $\overline{u}_k\in V_k$ and all $k\geq k_*$.
\end{lemma}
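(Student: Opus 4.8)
The plan is to evaluate all four preparatory results — Lemma~\ref{lem:reg_HJB_discrete_stability}, Lemma~\ref{lem:KFP_res_consistency}, Lemma~\ref{lem:regularized_KFP_stability}, and Lemma~\ref{lem:L2_strong_mono_bound} — with the nonnegative approximation $\mkp\in\Vkp$ placed in the density slot, and then to close the resulting coupled system of inequalities by an absorption argument. To keep the bookkeeping manageable I would write $X\coloneqq\norm{\overline{u}_k-u_{\klk}}_{H^1(\Omega)}$, $Y\coloneqq\norm{\mkp-m_{\klk}}_\Omega$, and $Z\coloneqq\norm{\mkp-m_{\klk}}_{H^1(\Omega)}$, noting that $Y\leq Z$; and, since $\lambda_k\leq 1$, the definitions~\eqref{eq:epsilon_k_def} and~\eqref{eq:rho_k_def} give $\norm{\RkH(\overline{u}_k,\mkp)}_{V_k^*}\leq\lambda_k\rho_k(\overline{u}_k)\leq\rho_k(\overline{u}_k)$ and $\norm{\RkF(\overline{u}_k,\mkp)}_{V_k^*}\leq\rho_k(\overline{u}_k)$, while $\lambda_k^{1/2}$, $\norm{u_{\lamk}-\overline{u}_k}_{H^1(\Omega)}$ and $\norm{m_{\lamk}-\mkp}_\Omega$ are each $\leq\epsilon_k(\overline{u}_k)$.

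First I would use the HJB stability from Lemma~\ref{lem:reg_HJB_discrete_stability} with $\overline{m}_k=\mkp$: for $k$ at least the threshold $k_*$ appearing there, this produces $X\lesssim\rho_k(\overline{u}_k)+\epsilon_k(\overline{u}_k)+Y$. Next I would apply Lemma~\ref{lem:KFP_res_consistency} with $\overline{u}_k$ replaced by $u_{\klk}$ to get $\norm{\RkF(u_{\klk},\mkp)}_{V_k^*}\lesssim\lambda_k^{-1}\norm{u_{\lamk}-u_{\klk}}_{H^1(\Omega)}+\norm{m_{\lamk}-\mkp}_\Omega$, combine this with Lemma~\ref{lem:regularized_KFP_stability} taken at $\overline{m}_k=\mkp$ and with the triangle inequality $\norm{u_{\lamk}-u_{\klk}}_{H^1(\Omega)}\leq\norm{u_{\lamk}-\overline{u}_k}_{H^1(\Omega)}+X$, and then multiply through by $\lambda_k\leq 1$, obtaining $\lambda_k Z\lesssim\epsilon_k(\overline{u}_k)+X$. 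Finally, since $\mkp\in\Vkp$, Lemma~\ref{lem:L2_strong_mono_bound} applies with $\overline{m}_k=\mkp$; bounding the two duality pairings by $\norm{\cdot}_{V_k^*}\norm{\cdot}_{H^1(\Omega)}$ and then using $\norm{\RkH(\overline{u}_k,\mkp)}_{V_k^*}\leq\lambda_k\rho_k(\overline{u}_k)$ together with the previous step gives $c_F Y^2\leq\rho_k(\overline{u}_k)(\lambda_k Z)+\rho_k(\overline{u}_k)X\lesssim\rho_k(\overline{u}_k)\epsilon_k(\overline{u}_k)+\rho_k(\overline{u}_k)X$.

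To conclude I would substitute the first bound $X\lesssim\rho_k(\overline{u}_k)+\epsilon_k(\overline{u}_k)+Y$ into the last one, yielding $c_F Y^2\lesssim\rho_k(\overline{u}_k)^2+\rho_k(\overline{u}_k)\epsilon_k(\overline{u}_k)+\rho_k(\overline{u}_k)Y$, and then apply Young's inequality to the term $\rho_k(\overline{u}_k)Y$ with a small enough coefficient on $Y^2$ to absorb it into the left-hand side, which gives $Y\lesssim\rho_k(\overline{u}_k)+\epsilon_k(\overline{u}_k)$. Feeding this back into the first step yields $X\lesssim\rho_k(\overline{u}_k)+\epsilon_k(\overline{u}_k)$, and then the second step yields $\lambda_k Z\lesssim\rho_k(\overline{u}_k)+\epsilon_k(\overline{u}_k)$; adding the three estimates, with $k_*$ chosen as the threshold from Lemma~\ref{lem:reg_HJB_discrete_stability}, gives~\eqref{eq:apriori_mixed_norm_1}.

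The hardest part will be the careful accounting of the powers of $\lambda_k$: the $\lambda_k^{-1}$ factors stem from the $\lambda_k^{-1}$ Lipschitz constant of $\partial_p H_{\lambda_k}$ in~\eqref{eq:reg_H_deriv_Lipschitz}, which is precisely why only the $\lambda_k$-weighted $H^1$-norm of the density error can be controlled and why $\rho_k$ must carry a $\lambda_k^{-1}$ on the HJB residual; the argument only works if every dangerous $\lambda_k^{-1}$ ends up multiplied by a $\lambda_k$ and none is left loose. I also expect the self-referential appearance of $X$ in all three intermediate estimates to force the final quadratic absorption via Young's inequality, and I should be careful that Lemma~\ref{lem:regularized_KFP_stability} only controls $\RkF$ with first argument $u_{\klk}$, which is why the detour through Lemma~\ref{lem:KFP_res_consistency} plus a triangle inequality is needed to re-express everything in terms of $\overline{u}_k$ and the controlled quantity $X$.
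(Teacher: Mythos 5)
Your proposal is correct and follows essentially the same route as the paper's proof: the same four ingredients (HJB stability, KFP stability, KFP consistency at $u_{\klk}$, and the $L^2$ strong-monotonicity bound with $\mkp\in\Vkp$), the same bookkeeping of $\lambda_k$ powers, and the same quadratic absorption via Young's inequality. The only cosmetic differences are that the paper bundles $\norm{\overline{u}_k-u_{\klk}}_{H^1(\Omega)}+\lambda_k\norm{\mkp-m_{\klk}}_{H^1(\Omega)}$ into a single quantity and bounds $\norm{\RkH(\overline{u}_k,\mkp)}_{V_k^*}$ by $\epsilon_k(\overline{u}_k)$ via the HJB consistency lemma, whereas you bound it by $\lambda_k\rho_k(\overline{u}_k)$ directly from the definition of $\rho_k$ — both yield the same estimate.
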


\begin{proof}
For each $k\in \N$, let $\overline{u}_k \in V_k$ be fixed but arbitrary.
We start by considering the error quantity $\mu_k(\overline{u}_k)$ defined by
\begin{equation}\label{eq:mu_k_def}
\mu_k(\overline{u}_k) \coloneqq \norm{\overline{u}_k-u_{k,\lamk}}_{H^1(\Omega)} + \lambda_k \norm{\mkp - m_{k,\lamk}}_{H^1(\Omega)} \quad \forall k \in \N.
\end{equation}
First, we apply the stability bound of the discrete KFP equation from Lemma~\ref{lem:regularized_KFP_stability} to the last term in $\mu_k(\overline{u}_k)$ to obtain
\begin{equation}
\mu_k(\overline{u}_k) \lesssim \norm{\overline{u}_k-u_{k,\lamk}}_{H^1(\Omega)}+\lambda_k \norm{\RkF(u_{k,\lambda_k},\mkp)}_{V_k^*} .
\end{equation}
Next, we apply the bound of Lemma~\ref{lem:KFP_res_consistency} for the dual norm $\norm{\RkF(u_{k,\lambda_k},\mkp)}_{V_k^*}$ to find that, for all $k$,
\begin{equation}
\begin{split}
\mu_k(\overline{u}_k) &\lesssim \norm{\overline{u}_k-u_{k,\lamk}}_{H^1(\Omega)} + \lambda_k \left( \lambda_k^{-1}  \norm{u_{\lamk} - u_{k,\lamk}}_{H^1(\Omega)} + \norm{m_{\lamk} - \mkp}_\Omega  \right)  
\\ & \lesssim \norm{\overline{u}_k- u_{k,\lamk}}_{H^1(\Omega)}+\norm{u_{\lamk} - \overline{u}_k}_{H^1(\Omega)} + \lambda_k \norm{m_{\lamk} - \mkp}_\Omega
\\ & \lesssim  \norm{\overline{u}_k- u_{k,\lamk}}_{H^1(\Omega)} + \epsilon_k(\overline{u}_k),
\end{split}
\end{equation}
where we have used the triangle inequality on $\norm{u_{\lamk} - u_{k,\lamk}}_{H^1(\Omega)}$ in order to pass to the second line above and we used the fact that $\{\lambda_k\}_{k\in\mathbb{N}}\subset (0,1]$ in passing to the third line.
Next, we use the stability bound for the discrete HJB equation of Lemma~\ref{lem:reg_HJB_discrete_stability} to bound $\norm{\overline{u}_k-u_{k,\lamk}}_{H^1(\Omega)}$ and thus find that, for all $k$ sufficiently large,
\begin{multline}
\mu_k(\overline{u}_k) 
  \lesssim  \norm{\RkH(\overline{u}_{k},\mkp)}_{V_k^*}+\lamk^{\frac{1}{2}} + \norm{\mkp-m_{k,\lamk}}_\Omega + \epsilon_k(\overline{u}_k),
\\
 \leq \norm{\RkH(\overline{u}_{k},\mkp)}_{V_k^*} +  \norm{\mkp-m_{k,\lamk}}_\Omega +  \epsilon_k(\overline{u}_k) ,
\end{multline}
where we recall that $\epsilon_k(\overline{u}_k)$ is defined in~\eqref{eq:epsilon_k_def} above. Then, we use the continuity bound~\eqref{eq:discrete_reg_HJB_consistency} which implies that $\norm{\RkH(\overline{u}_{k},\mkp)}_{V_k^*} \leq \epsilon_k(\overline{u}_k)$ to get
\begin{equation}\label{eq:apriori_mixed_norm_2}
\mu_k(\overline{u}_k) \lesssim \epsilon_k(\overline{u}_k)+ \norm{\mkp-m_{k,\lamk} }_\Omega,
\end{equation}
for all $k$ sufficiently large.
Then, we use Lemma~\ref{lem:L2_strong_mono_bound} which implies that
\begin{equation}\label{eq:apriori_mixed_norm_6}
\norm{\mkp-m_{k,\lamk}}_\Omega^2
\lesssim \norm{\RkH }_{V_k^*}\norm{\mkp-m_{k,\lamk}}_{H^1(\Omega)} + \norm{\RkF}_{V_k^*}\norm{\overline{u}_k-u_{k,\lamk}}_{H^1(\Omega)},
\end{equation}
where we have abbreviated the notation by omitting the arguments of the residual operators, i.e.\ $\RkH=\RkH(\overline{u}_k,\mkp)$ and $\RkF=\RkF(\overline{u}_k,\mkp)$ in \eqref{eq:apriori_mixed_norm_6} above.
Then, observe that $\norm{\RkH }_{V_k^*}\norm{\mkp-m_{k,\lamk}}_{H^1(\Omega)} \leq \rho_k(\overline{u}_k) \mu_k(\overline{u}_k) $ and also that $\norm{\RkF}_{V_k^*}\norm{\overline{u}_k-u_{k,\lamk}}_{H^1(\Omega)}\leq \rho_k(\overline{u}_k) \mu_k(\overline{u}_k)$, where it is recalled that $\rho_k(\overline{u}_k)$ is defined in~\eqref{eq:rho_k_def} above. 
Therefore, after taking square roots, we find that
\begin{equation}\label{eq:apriori_mixed_norm_7}
\norm{\mkp-m_{k,\lamk}}_\Omega \lesssim \rho_k(\overline{u}_k)^{\frac{1}{2}} \mu_k(\overline{u}_k)^{\frac{1}{2}}.
\end{equation}
Thus, combining~\eqref{eq:apriori_mixed_norm_2} and~\eqref{eq:apriori_mixed_norm_7}, we find that
\begin{equation}\label{eq:apriori_mixed_norm_3}
\mu_k(\overline{u}_k) \lesssim \epsilon_k(\overline{u}_k) + \rho_k(\overline{u}_k)^{\frac{1}{2}} \mu_k(\overline{u}_k)^{\frac{1}{2}}.
\end{equation}
We then apply Young's inequality to find that
\begin{equation}\label{eq:apriori_mixed_norm_4}
\mu_k(\overline{u}_k) \lesssim \epsilon_k(\overline{u}_k)+\rho_k(\overline{u}_k).
\end{equation}
Therefore, we see from~\eqref{eq:apriori_mixed_norm_7} and~\eqref{eq:apriori_mixed_norm_3} that
\begin{equation}\label{eq:apriori_mixed_norm_5}
\norm{\mkp-m_{k,\lamk}}_\Omega \lesssim \rho_k(\overline{u}_k)^{\frac{1}{2}}\left(\epsilon_k(\overline{u}_k)+\rho_k(\overline{u}_k)\right)^{\frac{1}{2}} \leq \epsilon_k(\overline{u}_k) + \rho_k(\overline{u}_k).
\end{equation}
Hence, we conclude~\eqref{eq:apriori_mixed_norm_1} upon adding~\eqref{eq:apriori_mixed_norm_4} to \eqref{eq:apriori_mixed_norm_5}.
\end{proof}
}

{
\paragraph{Proof of Lemma~\ref{lem:discrete_reg_l2_bound}}
Recall that the quantities $\epsilon_k(\overline{u}_k)$ and $\rho_k(\overline{u}_k)$ are defined in~\eqref{eq:epsilon_k_def} and~\eqref{eq:rho_k_def} above.
For arbitrary $\overline{u}_k\in V_k$, we start by applying the triangle inequality
\begin{multline}
\norm{u_{\lamk}-u_{\klk}}_{H^1(\Omega)} + \norm{m_{\lamk}-m_{\klk}}_\Omega 
 \\ \leq \norm{u_{\lamk}-\overline{u}_k}_{H^1(\Omega)} + \norm{m_{\lamk}-\mkp}_\Omega 
+ \norm{\overline{u}_k - u_{\klk}}_{H^1(\Omega)} + \norm{\mkp-m_{\klk}}_\Omega 
\\ \leq  \epsilon(\overline{u}_k) +  \norm{\overline{u}_k - u_{\klk}}_{H^1(\Omega)} + \norm{\mkp-m_{\klk}}_\Omega.
\end{multline}
It then follows from Lemma~\ref{lem:apriori_mixed_norm} that, for all $k$ sufficiently large,
\begin{equation}\label{eq:discrete_reg_l2_bound_1}
\norm{u_{\lamk}-u_{\klk}}_{H^1(\Omega)} + \norm{m_{\lamk}-m_{\klk}}_\Omega \lesssim \epsilon_k(\overline{u}_k) + \rho_k(\overline{u}_k) \quad \forall\,\overline{u}_k\in V_k.
\end{equation}
We now choose $\overline{u}_k = I^{\mathrm{SZ}}_k u_{\lamk} \in V_k$ to be the Scott--Zhang quasi-interpolation of $u_{\lamk}$~\cite{scott1990finite}, which, combined with the uniform $H^{1+\gamma}$ regularity bound~\eqref{eq:regularized_uniform_gamma} for $u_{\lamk}$, implies that
\begin{equation}\label{eq:discrete_reg_l2_bound_2}
\norm{u_{\lamk} - I^{\mathrm{SZ}}_k u_{\lamk}}_{H^1(\Omega)} \lesssim h_k^\gamma \quad \forall k\in \N.
\end{equation}
It remains now only to bound $\epsilon_k(I^{\mathrm{SZ}}_k u_{\lamk})$ and $\rho_k(I^{\mathrm{SZ}}_k u_{\lamk})$ for each $k\in\N$.
Recall that Lemma~\ref{lem:mkstar_l2_approx} shows that $\norm{m_{\lamk}-\mkp}_\Omega\lesssim h_k^\gamma$. It then follows that
\begin{equation}\label{eq:discrete_reg_l2_bound_3}
\epsilon_k(I^{\mathrm{SZ}}_k u_{\lamk}) \lesssim h_k^\gamma + h_k + \lambda_k^{\frac{1}{2}} \quad \forall k\in \N.
\end{equation}
To bound $\rho_k(I^{\mathrm{SZ}}_k u_{\lamk})$, we use Lemmas~\ref{lem:discrete_reg_HJB_consistency} and~\ref{lem:KFP_res_consistency} to find that
\begin{multline}
\rho_k(I^{\mathrm{SZ}}_k u_{\lamk}) = \lambda_k^{-1}\norm{\RkH(I^{\mathrm{SZ}}_k u_{\lamk},\mkp)}_{V_k^*} + \norm{\RkF(I^{\mathrm{SZ}}_k u_{\lamk},\mkp)}_{V_k^*}
\\\lesssim  \lambda_k^{-1}\left(\norm{u_{\lamk}-I^{\mathrm{SZ}}_k u_{\lamk}}_{H^1(\Omega)} + \norm{m_{\lamk}-\mkp}_\Omega + \norm{h_{\Tk}\nabla u_{\lamk}}_\Omega \right)
\\ +\lambda_k^{-1}\norm{u_{\lamk}-I^{\mathrm{SZ}}_k u_{\lamk}}_{H^1(\Omega)} + \norm{m_{\lamk}-\mkp}_\Omega \quad \forall k\in\N.
\end{multline}
Bounding each term above, and using the fact that $0<\lamk\leq 1$ and $0<\gamma\leq 1$, we then obtain
\begin{equation}\label{eq:discrete_reg_l2_bound_4}
\rho_k(I^{\mathrm{SZ}}_k u_{\lamk})\lesssim \lambda_k^{-1} h_k^\gamma + \lambda_k^{-1} h_k+h_k^{\gamma}\lesssim \lambda_k^{-1} h_k^{\gamma} \quad \forall k\in \N.
\end{equation}
It then follows from~\eqref{eq:discrete_reg_l2_bound_1}, \eqref{eq:discrete_reg_l2_bound_3} and~\eqref{eq:discrete_reg_l2_bound_4} that, for all $k$ sufficiently large,
\begin{equation}
\norm{u_{\lamk}-u_{\klk}}_{H^1(\Omega)} + \norm{m_{\lamk}-m_{\klk}}_\Omega \lesssim \lambda_k^{-1} h_k^\gamma + \lambda_k^{\frac{1}{2}}.
\end{equation}
This completes the proof of~\eqref{eq:discrete_reg_l2_bound} and thus of the Lemma.
\hfill\proofbox
}

{
\appendix

\section{Proof of Lemma \ref{lem:reg_discrete_convergence}}\label{appendix:pf-of-lemma-hk-lamk-vanish-jointly}
This appendix is dedicated to the proof of Lemma \ref{lem:reg_discrete_convergence}.

\subsection{Analysis of discrete regularized HJB equation}

In this subsection we establish convergence of the discrete HJB equation with regularized Hamiltonian to the continuous unregularized HJB equation. The analysis will be based on the following result for the discrete regularized HJB equation. It is proved similarly to \cite[Lemma 6.3]{osborne2022analysis}, so we omit the proof.
\begin{lemma}[Well-posedness and stability of discrete regularized HJB equation]\label{lemma-discr-reg-HJB-eqn-exist-uniq-stab}
	{Assume the hypotheses~\ref{ass:bounded} and~\ref{ass:dmp}.} 
	Let $\{\lambda_k\}_{k\in \N} \subset (0,1]$ denote a sequence of real numbers. Then, for each $\overline{m}\in L^2(\Omega)$ and $k\in\mathbb{N}$, there exists a unique ${w}_{k,\lamk}\in V_{k}$ such that
	\begin{equation}\label{eq:discr-reg-HJB-eqn}
		(A_k\nabla w_{k,\lambda_k},\nabla v_k)_\Omega+(H_{\lambda_k}[\nabla w_{k,\lambda_k}],v_k)_\Omega  = (F[\overline{m}],v_k)_\Omega \qquad\forall v_k\in V_k.
	\end{equation} 
	Furthermore,
	\begin{equation}\label{eq:disc_hjb_reg_a_priori_bound}
		\lVert w_{k,\lambda_k} \rVert_{H^1(\Omega)}  \lesssim  \lVert \overline{m} \rVert_\Omega + \lVert f \rVert_{C(\overline{\Omega}\times \mathcal{A})}+1 .
	\end{equation}
\end{lemma}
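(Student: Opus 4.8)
The plan is to treat \eqref{eq:discr-reg-HJB-eqn} as a nonlinear system on the finite-dimensional space $V_k$ (with $k$ and the corresponding $\lambda_k\in(0,1]$ fixed) and to route every estimate through the class $W(V_k,\Dk)$, whose members enjoy the inf-sup stability \eqref{eq:infsup_stab_linop_1} by virtue of the discrete maximum principle from \ref{ass:dmp}. The central elementary device is a linearization identity: since $H_{\lambda_k}$ is $C^{1}$ in $p$ with $\bigl|\tfrac{\partial H_{\lambda_k}}{\partial p}\bigr|\leq L_H$ by \eqref{eq:reg_H_deriv_linf_bound}, the fundamental theorem of calculus gives, for any $v,w\in H^1(\Omega)$,
\[
H_{\lambda_k}[\nabla v]-H_{\lambda_k}[\nabla w]=\tilde b\cdot\nabla(v-w)\ \text{ a.e.\ in }\Omega,\qquad \tilde b(x):=\int_0^1\frac{\partial H_{\lambda_k}}{\partial p}\bigl(x,\nabla w(x)+t\nabla(v-w)(x)\bigr)\,\dd t,
\]
with $\tilde b\in L^\infty(\Omega;\R^\dim)$ and $\|\tilde b\|_{L^\infty(\Omega;\R^\dim)}\leq L_H$, so that the linear operator $v_k\mapsto(A_k\nabla\cdot,\nabla v_k)_\Omega+(\tilde b\cdot\nabla\cdot,v_k)_\Omega$ belongs to $W(V_k,\Dk)$.

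Uniqueness and the a priori bound are then immediate. Subtracting \eqref{eq:discr-reg-HJB-eqn} for two solutions $w_k^1,w_k^2$ and applying the identity with $v=w_k^1$, $w=w_k^2$ gives $L(w_k^1-w_k^2)=0$ in $V_k^*$ for some $L\in W(V_k,\Dk)$, whence $w_k^1=w_k^2$ by \eqref{eq:infsup_stab_linop_1}. For \eqref{eq:disc_hjb_reg_a_priori_bound}, using the identity with $w=0$ I would write $H_{\lambda_k}[\nabla w_{k,\lambda_k}]=H_{\lambda_k}(\cdot,0)+\tilde b\cdot\nabla w_{k,\lambda_k}$ with $\|\tilde b\|_{L^\infty}\leq L_H$ and $\|H_{\lambda_k}(\cdot,0)\|_\Omega\lesssim 1$ (from \eqref{eq:reg_H_linear_growth} at $p=0$, or from \eqref{eq:reg_H_approx} together with $H(\cdot,0)=-\inf_{\alpha\in\mathcal{A}}f(\cdot,\alpha)$, which even yields the sharper $\lesssim\|f\|_{C(\overline{\Omega}\times\mathcal{A})}+1$); then \eqref{eq:discr-reg-HJB-eqn} becomes $L w_{k,\lambda_k}=F[\overline m]-H_{\lambda_k}(\cdot,0)$ in $V_k^*$ with $L\in W(V_k,\Dk)$, and \eqref{eq:infsup_stab_linop_1} together with the linear growth \eqref{eq:F_linear_growth} of $F$ gives $\|w_{k,\lambda_k}\|_{H^1(\Omega)}\lesssim\|\overline m\|_\Omega+\|f\|_{C(\overline{\Omega}\times\mathcal{A})}+1$, with a constant independent of $k$ and $\lambda_k$ since $\Cstab$ is.

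For existence I would use a topological argument, because the nonlinear operator is not coercive in general: the reaction term obeys only $|(H_{\lambda_k}[\nabla w_k],w_k)_\Omega|\lesssim L_H C_P\|\nabla w_k\|_\Omega^2$, which need not be dominated by $\nu\|\nabla w_k\|_\Omega^2$ when $\nu$ is small, so it is precisely the discrete maximum principle that compensates. For $s\in[0,1]$ let $\Phi^s\colon V_k\to V_k^*$ be defined by $\langle\Phi^s(w_k),v_k\rangle_{V_k^*\times V_k}:=(A_k\nabla w_k,\nabla v_k)_\Omega+s(H_{\lambda_k}[\nabla w_k],v_k)_\Omega-(F[\overline m],v_k)_\Omega$, so that $\Phi^1(w_k)=0$ is \eqref{eq:discr-reg-HJB-eqn} and $\Phi^0$ is affine with invertible symmetric positive-definite linear part (the discrete diffusion operator, which lies in $W(V_k,\Dk)$ with transport field $0$). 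The map $(s,w_k)\mapsto\Phi^s(w_k)$ is jointly continuous. If $\Phi^s(w_k)=0$, the linearization identity with $w=0$ rewrites the equation as $L w_k=F[\overline m]-sH_{\lambda_k}(\cdot,0)$ with $L:=L_{s\tilde b}\in W(V_k,\Dk)$ (note $\|s\tilde b\|_{L^\infty}\leq L_H$), whence $\|w_k\|_{H^1(\Omega)}\leq\Cstab(\|F[\overline m]\|_\Omega+\|H_{\lambda_k}(\cdot,0)\|_\Omega)=:R_0$, a bound independent of $s$. Writing $\Phi^s$ in the nodal basis $\{\xi_i\}_{i=1}^{M_k}$ turns it into a continuous map $\R^{M_k}\to\R^{M_k}$ with no zeros outside a fixed ball $B$ (depending on $k$ but not on $s$); hence the Brouwer degree $\deg(\Phi^s,B,0)$ is well-defined and, by homotopy invariance, equals $\deg(\Phi^0,B,0)=\operatorname{sgn}\det(\cdot)=1\neq0$, so $\Phi^1$ vanishes somewhere in $V_k$, giving the required $w_{k,\lambda_k}$.

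The only genuinely delicate point is this existence step: one cannot invoke coercivity or monotonicity of the nonlinear operator when $\nu$ is small relative to $L_H$, so the argument must pass through $W(V_k,\Dk)$ and its DMP-based stability \eqref{eq:infsup_stab_linop_1}, both to obtain the uniform a priori bound along the homotopy and to keep the linearized operators invertible. Schaefer's fixed-point theorem applied to the same scaled homotopy would serve equally well in place of the degree argument.
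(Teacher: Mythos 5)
Your proposal is correct, and it is worth noting that the paper does not actually write out a proof of this lemma: in Appendix~A it is stated with the remark that it is proved ``similarly to'' the analogous result for the unregularized discrete HJB equation in the earlier work \cite{osborne2022analysis}, whose argument likewise runs entirely through the DMP-based inf-sup stability \eqref{eq:infsup_stab_linop_1} of the class $W(V_k,D_k)$. Where you genuinely differ is in the ingredients that regularization makes available: since $H_{\lambda_k}$ is $C^1$ in $p$ with $\lvert \partial H_{\lambda_k}/\partial p\rvert \leq L_H$, you can linearize exactly via the mean-value identity $H_{\lambda_k}[\nabla v]-H_{\lambda_k}[\nabla w]=\tilde b\cdot\nabla(v-w)$, whereas the unregularized argument must work with measurable selections $b(\cdot,\alpha)$, $\alpha\in\Lambda[\cdot]$, and one-sided convexity inequalities together with the comparison principle; and for existence you use a Brouwer-degree homotopy (or Schaefer), which is a clean, self-contained alternative to the fixed-point argument of the cited lemma. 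Both routes deliver the same essential payoff: uniqueness and the bound \eqref{eq:disc_hjb_reg_a_priori_bound} with a constant coming only from $\Cstab$, $L_H$, $C_F$ and $\Omega$, hence independent of $k$ and $\lambda_k$ (your treatment of the zeroth-order term via $H(\cdot,0)=-\inf_{\alpha\in\mathcal{A}}f(\cdot,\alpha)$ and \eqref{eq:reg_H_approx} is exactly what gives the $\|f\|_{C(\overline{\Omega}\times\mathcal{A})}+1$ on the right-hand side). Two minor points to record if you write this up: (i) the measurability of the averaged field $\tilde b$ should be justified --- on each element the discrete gradients are constant and $\partial_p H_{\lambda_k}$ is jointly continuous for the Moreau--Yosida regularization, so $\tilde b\in L^\infty(\Omega;\R^d)$ with $\|\tilde b\|_{L^\infty(\Omega;\R^d)}\leq L_H$ as claimed; (ii) in the degree argument one must fix the coordinate identification of $V_k$ with $\R^{M_k}$ and choose the ball $B$ strictly larger than the $s$-uniform a priori bound, so that $\Phi^s$ has no zeros on $\partial B$ for any $s\in[0,1]$; you state this, and it is precisely what legitimizes homotopy invariance and the computation $\deg(\Phi^0,B,0)=1$ for the SPD diffusion operator.
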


We now show the convergence of the discrete regularized HJB equation to the continuous HJB equation in the joint limit as the mesh-size and regularization parameter vanish.
\begin{lemma}[Convergence of discrete regularized HJB equation]\label{lemma-conv-disc-HJB-to-cont-HJB-eqn}
	{Assume the hypotheses~\ref{ass:bounded} and~\ref{ass:dmp}.} 
	Let $\{\lambda_k\}_{k\in \N} \subset (0,1]$ denote a sequence of real numbers converging to zero. If $\{\overline{m}_k\}_{k\in\mathbb{N}}$ is a sequence such that $\overline{m}_k\to \overline{m}$ in $L^2(\Omega)$ as $k\to\infty$, then the corresponding sequence of solutions $\{w_{k,\lambda_k}\}_{k\in\mathbb{N}}$ of the discrete HJB equation \eqref{eq:discr-reg-HJB-eqn} converges strongly to $\overline{u}$ in $H_0^1(\Omega)$ as $k\to\infty$, where $\overline{u}\in H_0^1(\Omega)$ {is the unique solution of}
	\begin{equation}\label{eq:weak-cont-HJB-eqn}
		\nu(\nabla \overline{u},\nabla v)_{\Omega} + (H[\nabla\overline{u}],v)_{\Omega} = (F[\overline{m}],v)_{\Omega}\quad\forall v\in H_0^1(\Omega).
	\end{equation} 
\end{lemma}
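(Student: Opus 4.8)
The plan is to argue by compactness, passing to the limit in the discrete equations while dealing carefully with the nonlinear dependence of the Hamiltonian on the gradient. First I would note that, since $\overline{m}_k\to\overline{m}$ in $L^2(\Omega)$, the sequence $\{\overline{m}_k\}_{k\in\N}$ is bounded in $L^2(\Omega)$, so the \emph{a priori} bound~\eqref{eq:disc_hjb_reg_a_priori_bound} of Lemma~\ref{lemma-discr-reg-HJB-eqn-exist-uniq-stab} gives $\sup_{k\in\N}\norm{w_{k,\lambda_k}}_{H^1(\Omega)}<\infty$. Consequently, along some subsequence (not relabeled) we have $w_{k,\lambda_k}\rightharpoonup w$ weakly in $H^1_0(\Omega)$ for some $w\in H^1_0(\Omega)$, and, by the compact embedding $H^1_0(\Omega)\hookrightarrow L^2(\Omega)$, also $w_{k,\lambda_k}\to w$ strongly in $L^2(\Omega)$. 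It is also convenient to record that $F[\overline{m}_k]\to F[\overline{m}]$ in $L^2(\Omega)$ by the Lipschitz continuity~\eqref{eq:F_lipschitz} of $F$, and that the family $\{H_{\lambda_k}[\nabla w_{k,\lambda_k}]\}_{k\in\N}$ is bounded in $L^2(\Omega)$ by the uniform linear growth~\eqref{eq:reg_H_linear_growth}.

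The crux, and the main obstacle, is that the Hamiltonian term depends nonlinearly on the gradient, so weak $H^1$-convergence of $w_{k,\lambda_k}$ is not enough to pass to the limit in $(H_{\lambda_k}[\nabla w_{k,\lambda_k}],v_k)_\Omega$; one first has to upgrade to strong convergence of the gradients. To do this, I would pick $\chi_k\in V_k$ with $\chi_k\to w$ in $H^1_0(\Omega)$, which is possible since $\bigcup_{j\in\N}V_j$ is dense in $H^1_0(\Omega)$, and test~\eqref{eq:discr-reg-HJB-eqn} (with data $\overline{m}_k$) against $v_k\coloneqq w_{k,\lambda_k}-\chi_k\in V_k$ to obtain
\begin{multline*}
(A_k\nabla w_{k,\lambda_k},\nabla(w_{k,\lambda_k}-\chi_k))_\Omega \\ = (F[\overline{m}_k],w_{k,\lambda_k}-\chi_k)_\Omega-(H_{\lambda_k}[\nabla w_{k,\lambda_k}],w_{k,\lambda_k}-\chi_k)_\Omega.
\end{multline*}
Since $w_{k,\lambda_k}-\chi_k\to0$ in $L^2(\Omega)$ while $\{F[\overline{m}_k]\}$ and $\{H_{\lambda_k}[\nabla w_{k,\lambda_k}]\}$ are bounded in $L^2(\Omega)$, the right-hand side tends to $0$. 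On the left-hand side, writing $A_k=\nu\mathbb{I}_d+\Dk$ and using $\abs{\Dk}\leq\CD\htk$ from~\ref{ass:bounded} together with the uniform $H^1$-bound, one has $\norm{\Dk\nabla w_{k,\lambda_k}}_\Omega\leq\CD h_k\norm{\nabla w_{k,\lambda_k}}_\Omega\to0$, whence the cross term $(\Dk\nabla w_{k,\lambda_k},\nabla w_{k,\lambda_k})_\Omega\to0$ and $(A_k\nabla w_{k,\lambda_k},\nabla w_{k,\lambda_k})_\Omega=\nu\norm{\nabla w_{k,\lambda_k}}_\Omega^2+o(1)$, while, using in addition $\nabla\chi_k\to\nabla w$ in $L^2(\Omega;\R^d)$ and $\nabla w_{k,\lambda_k}\rightharpoonup\nabla w$, one finds $(A_k\nabla w_{k,\lambda_k},\nabla\chi_k)_\Omega\to\nu\norm{\nabla w}_\Omega^2$. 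Combining these gives $\norm{\nabla w_{k,\lambda_k}}_\Omega\to\norm{\nabla w}_\Omega$, which together with the weak convergence of the gradients yields $w_{k,\lambda_k}\to w$ \emph{strongly} in $H^1_0(\Omega)$.

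With strong $H^1$-convergence available, the Hamiltonian term is easily handled: by the regularization bound~\eqref{eq:reg_H_approx} one has $\norm{H_{\lambda_k}[\nabla w_{k,\lambda_k}]-H[\nabla w_{k,\lambda_k}]}_\Omega\leq\tfrac{L_H^2\lambda_k}{2}\abs{\Omega}^{1/2}\to0$, and since $v\mapsto H[\nabla v]$ is Lipschitz from $H^1(\Omega)$ into $L^2(\Omega)$ by~\eqref{eq:Hamiltonian_Lipschitz}, it follows that $H_{\lambda_k}[\nabla w_{k,\lambda_k}]\to H[\nabla w]$ in $L^2(\Omega)$. Then, for fixed $v\in H^1_0(\Omega)$, I would choose $v_k\in V_k$ with $v_k\to v$ in $H^1_0(\Omega)$ and pass to the limit in~\eqref{eq:discr-reg-HJB-eqn}, once more using $\norm{\Dk\nabla w_{k,\lambda_k}}_\Omega\to0$, the strong convergence $\nabla w_{k,\lambda_k}\to\nabla w$, and $F[\overline{m}_k]\to F[\overline{m}]$, to deduce $\nu(\nabla w,\nabla v)_\Omega+(H[\nabla w],v)_\Omega=(F[\overline{m}],v)_\Omega$ for all $v\in H^1_0(\Omega)$, i.e.\ $w$ is a weak solution of~\eqref{eq:weak-cont-HJB-eqn}. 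By the known uniqueness of the weak solution of~\eqref{eq:weak-cont-HJB-eqn} (c.f.~\cite{osborne2022analysis}) we get $w=\overline{u}$. Finally, since the limit $\overline{u}$ is independent of the extracted subsequence, a standard subsequence-of-every-subsequence argument shows that the whole sequence $\{w_{k,\lambda_k}\}_{k\in\N}$ converges strongly to $\overline{u}$ in $H^1_0(\Omega)$, which completes the proof.
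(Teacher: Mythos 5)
Your proof is correct, and it follows the same overall compactness strategy as the paper (uniform bound from Lemma~\ref{lemma-discr-reg-HJB-eqn-exist-uniq-stab}, extraction of a weakly convergent subsequence, upgrade to strong $H^1$ convergence, identification of the limit equation, uniqueness, and a subsequence argument for the full sequence), but the mechanism you use to obtain the strong $H^1$ convergence differs from the paper's in a genuine way. The paper first extracts a weak $L^2$ limit $g$ of $H_{\lambda_k}[\nabla w_{k,\lambda_k}]$, passes to the limit in the discrete equation against fixed $v_j\in V_j$ (using nestedness) to obtain the limit equation with the unidentified $g$, and only then tests the discrete equation with $w_{k,\lambda_k}$ itself to deduce $\norm{\nabla w_{k,\lambda_k}}_\Omega\to\norm{\nabla\overline{u}}_\Omega$, identifying $g=H[\nabla\overline{u}]$ at the very end. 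You instead test with $w_{k,\lambda_k}-\chi_k$, where $\chi_k\in V_k$ approximates the weak limit strongly in $H^1_0(\Omega)$, and read off the gradient-norm convergence directly from the boundedness of $F[\overline{m}_k]$ and $H_{\lambda_k}[\nabla w_{k,\lambda_k}]$ in $L^2(\Omega)$ together with $w_{k,\lambda_k}-\chi_k\to 0$ in $L^2(\Omega)$; this dispenses entirely with the auxiliary weak limit $g$ and with deriving the limit equation before strong convergence is in hand, which makes the limit passage in the equation afterwards purely a strong-convergence argument. The only point to be aware of is that your construction of $\chi_k\in V_k$ (and later $v_k\in V_k$) with strong $H^1$ convergence is not a consequence of density of $\bigcup_j V_j$ alone: it requires either the nestedness of the spaces assumed in Section~\ref{sec:setting_for_discretization} (a diagonal argument then works) or a quasi-interpolation operator such as Scott--Zhang; since the paper's own proof also relies on nestedness at the corresponding step, this is consistent with the standing assumptions and your argument goes through.
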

\begin{proof}
	Since $\{\lambda_k\}_{k\in \N} \subset (0,1]$ and $\{\overline{m}_k\}_{k\in\mathbb{N}}\subset L^2(\Omega)$ converges strongly in $L^2(\Omega)$ by hypothesis, we deduce from Lemma \ref{lemma-discr-reg-HJB-eqn-exist-uniq-stab}, in particular \eqref{eq:disc_hjb_reg_a_priori_bound}, that the sequence  $\{w_{k,\lambda_k}\}_{k\in\mathbb{N}}$ is uniformly bounded in $H_0^1(\Omega)$. Therefore, the Rellich--Kondrachov compactness theorem implies that we may pass to a subsequence (without change of notation) such that
	\begin{equation}\label{w_k_lam_k-conv}
		w_{k,\lambda_k}\rightharpoonup \overline{u} \text{ in }H_0^1(\Omega)\quad\text{and}\quad w_{k,\lambda_k}\to \overline{u} \text{ in }L^2(\Omega)
	\end{equation}
	as $k\to\infty$, for some $\overline{u}\in H_0^1(\Omega)$. Since the regularized Hamiltonian $H_{\lambda_k}$ has the linear growth property \eqref{eq:reg_H_linear_growth} uniformly in the regularization parameter, we also have that $\{H_{\lambda_k}[\nabla w_{k,\lambda_k}]\}_{k\in\mathbb{N}}$ is uniformly bounded in $L^2(\Omega)$. We therefore pass to a further subsequence, without change of notation, to obtain that 
	\begin{equation}\label{Hw_k_lam_k-cov}
		H_{\lambda_k}[\nabla w_{k,\lambda_k}]\rightharpoonup g\quad\text{in } L^2(\Omega)
	\end{equation}
	as $k\to\infty$ for some $g\in L^2(\Omega)$.
	Fix $j\in\mathbb{N}$ and let $v_j\in V_j$ be given. The hypothesis that the sequence of meshes $\{\mathcal{T}_k\}_{k\in\mathbb{N}}$ is nested implies that $V_j\subset V_k$ for all $k\geq j$. We then obtain that
	\begin{equation}
		(A_k\nabla w_{k,\lambda_k},\nabla v_j)_\Omega+(H_{\lambda_k}[\nabla w_{k,\lambda_k}],v_j)_\Omega  = (F[\overline{m}_k],v_j)_\Omega \qquad\forall k\geq j.
	\end{equation}
	Recalling \ref{ass:bounded} and Lipschitz continuity of $F$ c.f.\ \eqref{eq:F_lipschitz}, we use the convergences \eqref{w_k_lam_k-conv} and \eqref{Hw_k_lam_k-cov}, together with the strong convergence of $\{\overline{m}_k\}_{k\in\mathbb{N}}$ to $\overline{m}$ in $L^2(\Omega)$, to pass to the limit above to get
	\begin{equation}
		\nu (\nabla \overline{u},\nabla v_j)_\Omega+(g,v_j)_\Omega  = (F[\overline{m}],v_j)_\Omega \qquad\forall j\in\mathbb{N}.
	\end{equation}
	Since the union $\bigcup_{k\in\mathbb{N}}V_k$ is dense in $H_0^1(\Omega)$ and $j\in\mathbb{N}$ was arbitrary, we deduce that $\overline{u}$ satisfies
	\begin{equation}\label{eq:weak-cont-HJB-eqn_1}
		\nu( \nabla \overline{u},\nabla v)_\Omega+(g,v)_\Omega  = (F[\overline{m}],v)_\Omega \qquad\forall v\in H_0^1(\Omega).
	\end{equation} This implies in particular that $\|\nabla \overline{u}\|_{\Omega}^2=\nu^{-1}\left((F[\overline{m}],\overline{u})_{\Omega} - (g,\overline{u})_{\Omega}\right)$.
	On the other hand, for each given $k\in\mathbb{N}$, we take $w_{k,\lambda_k}\in V_k$ as a test function in the discrete HJB equation satisfed by $w_{k,\lambda_k}$ to find that
	\begin{equation}
		\nu\|\nabla w_{k,\lambda_k}\|_{\Omega}^2+(D_k\nabla w_{k,\lambda_k},\nabla w_{k,\lambda_k})_{\Omega}+(H_{\lambda_k}[\nabla w_{k,\lambda_k}],w_{k,\lambda_k})_{\Omega} = (F[\overline{m}_k],w_{k,\lambda_k})_\Omega.
	\end{equation}
	The hypothesis \ref{ass:bounded} and uniform boundedness of the sequence $\{w_{k,\lambda_k}\}_{k\in\mathbb{N}}$ in $H_0^1(\Omega)$ imply that 
	\begin{equation}
		\lim_{k\to\infty}|(D_k\nabla w_{k,\lambda_k},\nabla w_{k,\lambda_k})_{\Omega}|\leq \lim_{k\to\infty} \CD\norm{h_{\Tk}}_{L^\infty(\Omega)}\lVert w_{k,\lambda_k} \rVert_{H^1(\Omega)}^2=0.
	\end{equation}
	Therefore, the convergences \eqref{w_k_lam_k-conv} and \eqref{Hw_k_lam_k-cov}, together with the strong convergence of $\{\overline{m}_k\}_{k\in\mathbb{N}}$ to $\overline{m}$ in $L^2(\Omega)$ and the Lipschitz continuity of $F$, yield that
	\begin{equation}
		\lim_{k\to\infty}\nu\|\nabla w_{k,\lambda_k}\|_{\Omega}^2=(F[\overline{m}],\overline{u})_{\Omega}-(g,\overline{u})_{\Omega} =  \nu\|\nabla\overline{u}\|_{\Omega}^2.
	\end{equation}
	We therefore conclude from the $L^2$-convergence in \eqref{w_k_lam_k-conv} that $\|w_{k,\lambda_k}\|_{H^1(\Omega)}\to\|\overline{u}\|_{H^1(\Omega)}$ as $k\to\infty$. It then follows that the weak convergence of $\{w_{k,\lambda_k}\}_{k\in\mathbb{N}}$ to $\overline{u}$ in $H_0^1(\Omega)$ is in fact strong convergence: $w_{k,\lambda_k}\to\overline{u}$ in $H_0^1(\Omega)$ as $k\to\infty$ along a subsequence. The triangle inequality and the approximation bound \eqref{eq:reg_H_approx} satisfied by the regularized Hamiltonian then allow us to deduce that
	\begin{equation}
		\begin{split}
			&\|H[\nabla \overline{u}] - H_{\lambda_k}[\nabla w_{k,\lambda_k}]\|_{\Omega}
			\leq \|H[\nabla \overline{u}] - H[\nabla w_{k,\lambda_k}]\|_{\Omega}+{\frac{L_H^2\text{meas}_d(\Omega)^{\frac{1}{2}}\lambda_k }{2}}. 
		\end{split}
	\end{equation}
	This bound, along with the strong convergence of $\{w_{k,\lambda_k}\}_{k\in\mathbb{N}}$ to $\overline{u}$ in $H_0^1(\Omega)$, the weak convergence \eqref{Hw_k_lam_k-cov} and the convergence $\lambda_k\to 0$ as $k\to\infty$, yield that $H_{\lambda_k}[\nabla w_{k,\lambda_k}]\to g=H[\nabla \overline{u}]$ in $L^2(\Omega)$ as $k\to\infty$ along a subsequence. 
	{We then conclude from~\eqref{eq:weak-cont-HJB-eqn_1} that $\overline{u}\in H_0^1(\Omega)$ is a solution of the HJB equation~\eqref{eq:weak-cont-HJB-eqn}.}
	 Since this equation admits at most one solution in $H_0^1(\Omega)$ by \cite[Lemma 4.6]{osborne2022analysis}, we conclude that $\overline{u}$ is the unique solution of~\eqref{eq:weak-cont-HJB-eqn}. 
	 This implies that the strong convergence of a subsequence of $\{w_{k,\lambda_k}\}_{k\in\mathbb{N}}$ to $\overline{u}$ in $H_0^1(\Omega)$ holds also for the entire sequence $\{w_{k,\lambda_k}\}_{k\in\mathbb{N}}$, thereby completing the proof.
\end{proof}

\subsection{Convergence of solutions of regularized discrete MFG system \eqref{eq:regularized_fem_system} }

The proof of Lemma~\ref{lem:reg_discrete_convergence} that we present below is based on a compactness argument, c.f.\ \cite[Theorem 5.4]{osborne2022analysis}.
We begin by establishing uniform bounds for the sequence $\{(u_{k,\lambda_k},m_{k,\lambda_k})\}_{k\in\mathbb{N}}$ generated by the regularized discrete MFG system \eqref{eq:regularized_fem_system} with a given sequence of regularization parameters $\{\lambda_k\}_{k\in\mathbb{N}}\subset (0,1]$ converging to zero.

\paragraph{Proof of Lemma~\ref{lem:reg_discrete_convergence}.}

Due to the hypothesis \ref{ass:dmp}, we immediately deduce from \eqref{eq:infsup_stab_linop_2} that the density approximations $\{m_{k,\lambda_k}\}_{k\in\mathbb{N}}$ satisfy the uniform norm bound \eqref{eq:reg_discrete_m_apriori_bounds}.
The stability bound \eqref{eq:disc_hjb_reg_a_priori_bound} given by Lemma~\ref{lemma-discr-reg-HJB-eqn-exist-uniq-stab} and the fact that $\{\lambda_k\}_{k\in\mathbb{N}}\subset (0,1]$ converges to zero then imply that~\eqref{eq:reg_discrete_u_apriori_bounds} {holds, which shows that the sequence $\{ u_{\klk}  \}_{k\in \N}$ is uniformly bounded in $H^1$.}
This proves {the uniform bounds on the norms of the approximations, as stated in} Lemma~\ref{lem:reg_discrete_convergence}. 
The uniform bounds \eqref{eq:reg_discrete_m_apriori_bounds} and \eqref{eq:reg_discrete_u_apriori_bounds} then imply we may pass to a subsequence, without change of notation, such that
\begin{subequations}
	\begin{alignat}{2} 
		\label{eq:uklamk-conv}
		u_{k,\lambda_k}\rightharpoonup \overline{u} &\quad\text{in}\quad H_0^1(\Omega) &\quad&u_{k,\lambda_k}\to\overline{u}\quad\text{in}\quad L^2(\Omega),
		\\ 
		\label{eq:mklamk-conv}
		m_{k,\lambda_k}\rightharpoonup \overline{m} &\quad\text{in}\quad H_0^1(\Omega) &\quad&m_{k,\lambda_k}\to\overline{m}\quad\text{in}\quad L^q(\Omega),
	\end{alignat}
\end{subequations}
as $k\to\infty$, for some $\overline{u},\overline{m}\in H_0^1(\Omega)$, for any $q\in [1,2^*)$, where $2^*\coloneqq \infty$ if $d=2$ and $2^*\coloneqq \frac{2d}{d-2}$ if $d\geq 3$, and for any $q\in[1,\infty]$ if $d=1$. Then, the convergences \eqref{eq:uklamk-conv} and \eqref{eq:mklamk-conv}, together with Lemma \ref{lemma-conv-disc-HJB-to-cont-HJB-eqn} on convergence of the discrete HJB equation, imply that $u_{k,\lambda_k}\to \overline{u}$ in $H_0^1(\Omega)$ as $k\to\infty$ along a subsequence, where $\overline{u}\in H_0^1(\Omega)$ is the unique solution of 
\begin{equation}\label{eq:weak-cont-HJB-eqn-final-pf}
	\nu(\nabla \overline{u},\nabla v)_{\Omega} + (H[\nabla\overline{u}],v)_{\Omega} = (F[\overline{m}],v)_{\Omega}\quad\forall v\in H_0^1(\Omega).
\end{equation}
The uniform $L^{\infty}$-bound \eqref{eq:reg_H_deriv_linf_bound} implies that we may pass to a further subsequence (without change of notation) such that
\begin{equation}\label{eq:bklamk-conv}
	\frac{\partial H_{\lambda_k}}{\partial p}[\nabla u_{k,\lambda_k}]\rightharpoonup^* \tilde{b}_* \quad\text{in}\quad L^{\infty}(\Omega;\mathbb{R}^d),
\end{equation}
for some vector field $\tilde{b}_*\in L^{\infty}(\Omega;\mathbb{R}^d)$. Since the Moreau--Yosida regularization of the Hamiltonian satisfies \cite[Hypothesis (H5)]{osborne2024regularization} and we have that  $u_{k,\lambda_k}\to \overline{u}$ in $H_0^1(\Omega)$ as $k\to\infty$ along a subsequence, we invoke \cite[Lemma 5.3]{osborne2024regularization} to deduce that $\tilde{b}^*$ satisfies
\begin{equation}\label{eq:tildeb_*-incl-pf}
	\tilde{b}_*\in D_pH[\overline{u}].
\end{equation}
Consequently, we have that \eqref{eq:mklamk-conv} and \eqref{eq:bklamk-conv} imply  $m_{k,\lambda_k}\frac{\partial H_{\lambda_k}}{\partial p}[\nabla u_{k,\lambda_k}]\rightharpoonup \overline{m}\tilde{b}_* $ weakly in $L^{2}(\Omega;\mathbb{R}^d)$ as $k\to\infty$. Furthermore, the stability bound \eqref{eq:reg_discrete_m_apriori_bounds} and the hypothesis \ref{ass:bounded} give, for any $v\in H_0^1(\Omega)$,
\begin{equation}
	\lim_{k\to\infty}|(D_k\nabla m_{k,\lambda_k},\nabla v)_{\Omega}|\leq \lim_{k\to\infty} \CD\norm{h_{\Tk}}_{L^\infty(\Omega)}\lVert m_{k,\lambda_k} \rVert_{H^1(\Omega)}\lVert v \rVert_{H^1(\Omega)}=0.
\end{equation}
Fix $j\in\mathbb{N}$ and let $w_j\in V_j$ be fixed but arbitrary.
The hypothesis that the sequence of meshes $\{\mathcal{T}_k\}_{k\in\mathbb{N}}$ is nested implies that $V_j\subset V_k$ for each $k\geq j$, so by passing to the limit in the discrete KFP equation
\begin{equation}
	(A_k\nabla m_{k,\lambda_k},\nabla w_j)_\Omega+\left(m_{k,\lambda_k}\frac{\partial H_{\lambda_k}}{\partial p}[\nabla u_{k,\lambda_k}],\nabla w_j\right)_\Omega  =\langle G,w_j\rangle_{H^{-1}\times H^1_0}
\end{equation}
where we send $k\to\infty$, we conclude that $\overline{m}$ satisfies
\begin{equation}\label{eq:m-pre-lim-eqn}
	\nu(\nabla \overline{m},\nabla w_j)_\Omega+\left(\overline{m}\tilde{b}_*,\nabla w_j\right)_\Omega  =\langle G,w_j\rangle_{H^{-1}\times H^1_0}.
\end{equation}
As the union $\bigcup_{k\in\mathbb{N}}V_k$ is dense in $H_0^1(\Omega)$ and the equation \eqref{eq:m-pre-lim-eqn} holds for all $w_j\in V_j$, $j\in\mathbb{N}$, we deduce {that $\overline{m} \in H^1_0(\Omega)$ satisfies}
\begin{equation}\label{eq:KFP-pf-klamk-conv}
	\nu(\nabla \overline{m},\nabla w)_\Omega+\left(\overline{m}\tilde{b}_*,\nabla w\right)_\Omega  =\langle G,w \rangle_{H^{-1}\times H^1_0}\quad\forall w\in H_0^1(\Omega).
\end{equation}
In light of \eqref{eq:weak-cont-HJB-eqn-final-pf}, \eqref{eq:tildeb_*-incl-pf} and \eqref{eq:KFP-pf-klamk-conv}, we see then that $(\overline{u},\overline{m})\in H_0^1(\Omega)\times H_0^1(\Omega)$ is a weak solution of the MFG PDI \eqref{eq:PDI_weakform}. But uniqueness of the solution of the weak formulation \eqref{eq:PDI_weakform} (see \cite[Theorem 3.4]{osborne2022analysis}) then implies that $\overline{u}=u$ and $\overline{m}=m$ in $H_0^1(\Omega)$. 
In summary, we have shown that the sequence of approximations $\{(u_{k,\lambda_k},m_{k,\lambda_k})\}_{k\in\mathbb{N}}$ converges to the weak solution of the MFG PDI \eqref{eq:PDI_weakform} along a subsequence in the sense of \eqref{eq:mklamk-conv} with $u_{k,\lambda_k}\to {u}$ in $H_0^1(\Omega)$. Uniqueness of the weak solution then implies that the entire sequence $\{(u_{k,\lambda_k}, m_{k,\lambda_k})\}_{k\in\mathbb{N}}$ converges to $(u,m)$ in $H^1\times L^q$ in the sense of \eqref{eq:reg_discrete_convergence}. This completes the proof of Lemma \ref{lem:reg_discrete_convergence}.
\hfill\proofbox
}

\section*{Acknowledgments}
The work of the first author was supported by The Royal Society Career Development Fellowship.
The work of the second author was supported by the Engineering and Physical Sciences Research Council [grant number EP/Y008758/1].

\end{document}